\numberwithin{equation}{section} \theoremstyle{plain}
\newtheorem{thm}{Theorem}[section]
\newtheorem{prop}[thm]{Proposition}
\newtheorem{lem}[thm]{Lemma}
\newtheorem{cor}[thm]{Corollary}
\newtheorem{prob}[thm]{Problem}
\newtheorem{rem}[thm]{Remark}
\newtheorem{ack}{Acknowledgements}   
\DeclareMathOperator{\oi}{\mathbf{i}}
\DeclareMathOperator{\oj}{\mathbf{j}}
\DeclareMathOperator{\ok}{\mathbf{k}}
\DeclareMathOperator{\oRe}{\mathbf{Re}}
\DeclareMathOperator{\odiag}{\mathrm{diag}}
\DeclareMathOperator{\odist}{\mathrm{dist}}
\DeclareMathOperator{\oImag}{\mathrm{Image}}
\title[Isoparametric polynomials and sums of squares]{Isoparametric polynomials and sums of squares}
\author[J. Q. Ge]{Jianquan Ge}
\address{School of Mathematical Sciences, Beijing Normal
University, Beijing 100875, P. R. China}
\email{jqge@bnu.edu.cn}
\author[Z. Z. Tang]{Zizhou Tang$^{*}$}
\address{Chern Institute of Mathematics \& LPMC, Nankai University, Tianjin 300071, P. R. China}
\email{zztang@nankai.edu.cn}
\thanks {$^{*}$ the corresponding author.}
\subjclass[2010]{53C40; 14P99; 15A63.}
\date{}
\keywords{Hilbert's 17th problem, isoparametric polynomials, sum of squares, quadratic form.}
\thanks{The project is partially supported by Beijing NSF (Z190003), NSFC (No. 12171037, 11931007, 11871282), Nankai Zhide Foundation and Tianjin Outstanding Talents Foundation.}
\begin{document}
\maketitle


\begin{abstract}
Hilbert's 17th problem asks that whether every nonnegative polynomial can be a sum of squares of rational functions. It has been answered affirmatively by Artin. However, the question as to whether a given nonnegative polynomial is a sum of squares of polynomials is still a central question in real algebraic geometry. In this paper, we solve this question completely for the nonnegative polynomials associated with isoparametric polynomials, initiated by E. Cartan, which define the focal submanifolds of the corresponding isoparametric hypersurfaces.
\end{abstract}

\section{Introduction}\label{secintro}

A real polynomial $p(x)$ in $n$ variables is called \textit{positive semidefinite} (\emph{psd} for short) or \textit{nonnegative} if $p(x)\geq0$ for all $x\in\mathbb{R}^n$; it is called a \textit{sum of squares} (\emph{sos}) if there exist real polynomials $h_j$ such that $p=\sum h_j^2$. It is a central question in real algebraic geometry whether a given \emph{psd} polynomial is \emph{sos} (cf. \cite{Bl12,BSV16,BCR,Rez00,Rez07} and references therein).
As any \emph{psd} or \emph{sos} polynomial can be made homogeneous by adding an extra variable which preserves \emph{psd} or \emph{sos}, it is convenient to work with homogeneous polynomials (forms). Let $P_{n,d}$ and $\Sigma_{n,d}$ denote the sets of \emph{psd} and \emph{sos} forms in $n$ variables of even degree $d$, respectively. In this terminology, it is clear that $P_{n,d}\supseteq\Sigma_{n,d}$ and the question above asks whether or when a \emph{psd} form $p(x)\in P_{n,d}$ belongs to $\Sigma_{n,d}$.

The question above goes back to Minkowski's thesis defence in 1885.
It was Hilbert \cite{Hil1888} who showed that the equality $P_{n,d}=\Sigma_{n,d}$ holds only in the following four cases:
\begin{equation*}
P_{n,2}=\Sigma_{n,2}, \quad P_{1,d}=\Sigma_{1,d},\quad P_{2,d}=\Sigma_{2,d}, \quad P_{3,4}=\Sigma_{3,4}.
 \end{equation*}
 It follows that there exists a \emph{psd} but \emph{non-sos} form $p(x)\in P_{n,d}\backslash\Sigma_{n,d}$ if $n\geq3$ and $d\geq6$, or $n\geq4$ and $d\geq4$. Hilbert's proof used complex algebraic curves, and had no explicit example of a \emph{psd} polynomial that is \emph{non-sos}.
 77 years later, such an example was first constructed by Motzkin \cite{Mo65}. Since then many scattered examples were constructed by Robinson,
 Choi-Lam, Lax-Lax, Schm\"{u}dgen, and Reznick, etc. (cf. \cite{Rez00,Rez07}). Algorithms were also studied extensively and applied to many aspects like optimization theory, robotics and even self-driving cars (cf.\cite{AM17}).
  In the smallest cases: $(n,d)=(3,6)$ and $(4,4)$, Blekherman \cite{Bl12} first gave a complete unified geometric description of the difference between \emph{psd} and \emph{sos} forms.

   After the above remarkable theorem, Hilbert \cite{Hil1893} showed that any \emph{psd} form in $P_{3,d}$ ($d\geq6$) is a sum of squares of rational functions instead of polynomials. He then posed his famous Hilbert's 17th Problem in 1900 ICM: \textit{Must every \emph{psd} form be a sum of squares of rational functions} (\emph{sosr} for short)?  This was answered affirmatively by Artin \cite{Ar27} using orderings of fields. However, Artin's proof gives no specific \emph{sosr} representation of a \emph{psd }form. Uniform denominators $|x|^{2r}$ with sufficiently large $r$ were proved to exist for positive definite forms by P\'{o}lya and Reznick, i.e., if $p(x)\in P_{n,d}$ and $p(x)>0$ whenever $x\neq0$, then $|x|^{2r}p(x)\in\Sigma_{n,d}$. There are also many nonnegative, non-sos polynomials with
zeroes are known to become sos, after multiplying by $|x|^{2r}$. This holds for instance
for the Motzkin and Robinson polynomials (see Reznick \cite{Rez00}). On the other hand, there exist
nonnegative polynomials $f$ that will never become sum of squares after multiplying by $|x|^{2r}$
(and in fact by any positive polynomial). This is due to existence of
so-called ``bad points" (see Reznick \cite{Rez00}, page 16). Such a polynomial $f$ with bad points was given by Delzell
     $$D(w,x,y,z):=w^2(x^4y^2+y^4z^2+z^4x^2-3x^2y^2z^2)+z^8\in P_{4,8},$$
which has no \emph{sosr} representation with a uniform denominator $|x|^{2r}$ for any $r$.
One way to establish that a polynomial $f$ of degree $2d$ is not sos is to show that no polynomial of degree $d$ vanishes on the zero-set of $f$. This will be used in the proof of Theorems \ref{thmg422}, \ref{thm}, \ref{defcase}, \ref{g6m1} for some classes of isoparametric polynomials. However, this technique no longer applies after multiplication by $|x|^{2r}$.
For more history and developments we refer to the wonderful surveys \cite{Rez00, Rez07} by Reznick and \cite{BCR} by Bochnak-Coste-Roy.

In this paper we mainly consider the problem on a series of specific\emph{ psd} forms with significant geometric background, namely, isoparametric polynomials. It originated from the study of isoparametric hypersurfaces in unit spheres by E. Cartan \cite{Ca38} in the 1930s. Through a long history of efforts (e.g.,  \cite{Mun, OT75, FKM81, Ab83, DN85, Ta91, Fa99, St99, CCJ07, Im08, Miy13, Miy16, Chi11, Chi13, Chi16}, etc.), isoparametric hypersurfaces in unit spheres have been completely classified up to isometry. Equivalently, the isoparametric polynomials on Euclidean spaces have been completely classified up to orthogonal transformations.

A hypersurface of a Riemannian manifold is called \textit{isoparametric} if its nearby parallel hypersurfaces have constant mean curvature, or equivalently, it is locally a regular level set of an \textit{isoparametric function} $f$ (i.e., $|\nabla f|^2$ and $\Delta f$ are functions of $f$, cf. \cite{GT13, QT15}), or a regular leaf of an \textit{isoparametric foliation} (i.e., a singular Riemannian foliation of codimension $1$ with constant mean curvature regular leaves, cf. \cite{Ge16, GR15, Th10}). In unit spheres (or real space forms), Cartan showed that a hypersurface is isoparametric if and only if it has constant principal curvatures.

A fundamental result of M\"{u}nzner \cite{Mun} states that an
isoparametric hypersurface $M$ in a unit sphere $\mathbb{S}^{n-1}$ is an open part of
a level hypersurface of an isoparametric function $f=F|_{\mathbb{S}^{n-1}}$. Here $F$, called
a \emph{Cartan-M\"{u}nzner polynomial} (or \emph{isoparametric polynomial}),
is a homogeneous polynomial of degree $g$ on $\mathbb{R}^{n}$
satisfying the Cartan-M\"{u}nzner equation:
\begin{equation}\label{CMeq}
\left\{\begin{array}{ll}
|\nabla F|^2 =g^2|x|^{2g-2}, &\\
\Delta F=\frac{g^2}{2}(m_{-}-m_{+})|x|^{g-2},
  \end{array}
  \right. \quad x\in\mathbb{R}^{n},
\end{equation}
where $\nabla F$, $\Delta F$ denote the gradient and Laplacian of
$F$ on $\mathbb{R}^{n}$, respectively, $m_{\pm}$ denotes the
multiplicities of the maximal and minimal principal curvatures of
$M$ with respect to the normal direction $\frac{\nabla f}{|\nabla f|}$, and $g=\deg(F)$ is equal to the number of distinct principal curvatures of
$M$.

It is easy to see that $|\nabla f|^2=g^2(1-f^2)$ on the unit sphere $\mathbb{S}^{n-1}$, and thus $\oImag (f)=[-1,1]$, $f^{-1}(t)$, $t\in(-1,1)$, is a regular level set (thus an isoparametric hypersurface) and $f^{-1}(\pm1)=:M_{\pm}$ are smooth submanifolds, called \textit{focal submanifolds}, of codimension $m_{\pm}+1$ in $\mathbb{S}^{n-1}$.
In fact, given an isoparametric hypersurface $M$ in $\mathbb{S}^{n-1}$, it is clear that $M$ has exactly two focal submanifolds, say $M_{\pm}$. One then defines the corresponding isoparametric function $f$ on $\mathbb{S}^{n-1}$
by $f(x):=\cos(g\odist(x, M_+))$, where $\odist(x, M_+)$ is the spherically oriented distance from $x$ to the focal submanifold $M_+$ of $M$. We remark that if one takes
$M_-$ instead of $M_+$, the corresponding function becomes $-f$.
It turns out that the function
$$F(x):=|x|^gf(x/|x|)=|x|^g\cos(g\odist(x/|x|, M_+))$$
 is well-defined, and is exactly the corresponding Cartan-M\"{u}nzner polynomial on $\mathbb{R}^{n}$. For a systematic introduction of isoparametric theory, we refer to the excellent book by Cecil and Ryan \cite{CR15} and to a more updated survey by Chi \cite{Chi19}.

Using an elegant topological method, M\"{u}nzner \cite{Mun} proved the
remarkable result that the number $g$ must be $1$, $2$, $3$, $4$,
or $6$ (see a new simplified proof by Fang \cite{Fa17}). Now since $-1\leq f(x)\leq1$, we have $-|x|^g\leq F(x)\leq |x|^g$ on $\mathbb{R}^n$.
Thus we have infinitely many \emph{psd} forms $G_F^{\pm}$ and $H_F$ defined by
\begin{equation}\label{psdform}
\left\{\begin{array}{ll}
G_F^{\pm}(x):=|x|^g\pm F(x)\in P_{n,g} & g\;\; is\; even,\; g=2,4,6;\\
H_F(x):=|x|^{2g}-F(x)^2\in P_{n,2g}      & g=1,2,3,4,6.
\end{array}
\right.
\end{equation}
It is then natural to ask whether these explicit \emph{psd} forms (known to be nonnegative from their geometric background) are \emph{sos} or not. In this paper we solve this problem completely in accordance with the classification of isoparametric hypersurfaces in unit spheres. The proof will use representation theory of Clifford algebra for the cases when $g=4$, and deep geometric property of isoparametric hypersurfaces for the cases when $g=6$.


For $g=1$, isoparametric hypersurfaces are just hyperspheres $\mathbb{S}^{n-2}\subset\mathbb{S}^{n-1}$ and, up to a congruence, $F(x)=x_1$ is a coordinate function and thus $H_F$ is trivially \emph{sos}.

 Similarly, for $g=2$, they are the Clifford torus $\mathbb{S}^{k-1}\times \mathbb{S}^{n-k-1}\subset\mathbb{S}^{n-1}$ and, up to a congruence, $F(x)=\sum_{i=1}^kx_i^2-\sum_{i=k+1}^nx_i^2$ and thus $G_F^{\pm}$ are trivially \emph{sos}. 

 For $g=3$, Cartan showed that they are tubes around one of the four
 Veronese projective planes $\mathbb{FP}^2\subset\mathbb{S}^{3m+1}$ for $\mathbb{F}=\mathbb{R},\mathbb{C},\mathbb{H},\mathbb{O}$ with $m=1,2,4,8$. We show in Section \ref{secg3} that $H_F$ is always \emph{sos} with an explicit expression, not only for these four isoparametric polynomials with $g=3$ but also for $g=1,2,4,6$, simply by using the Cartan-M\"{u}nzner equation (\ref{CMeq}), Euler's formula and Lagrange's identity.

For $g=6$, there are only two classes of homogeneous isoparametric hypersurfaces in $\mathbb{S}^7$ and $\mathbb{S}^{13}$ with $m_+=m_-=:m=1,2$ respectively. We show in Section \ref{secg6} that for both isoparametric polynomials $F(x)$, neither of $G_F^{\pm}$  is \emph{sos}.

 The case $g=4$ is the most difficult case as in the classification process, because it is the only case in which there are infinitely many homogeneous and nonhomogeneous isoparametric hypersurfaces. Fortunately, due to the classification, we only need to consider the isoparametric polynomials of OT-FKM type and the exceptional two homogeneous cases with $(m_+,m_-)=(2,2), (4,5)$, which will be solved in Sections \ref{secg4OTFKM} and \ref{secg42245}, respectively. For both of these two types, we can always\footnote{For any isoparametric polynomial $F$ with multiplicities $(m_+, m_-)$, $F':=-F$ is an isoparametric polynomial with multiplicities $(m_-, m_+)$ determining the same class of isoparametric hypersurfaces with converse focal submanifolds $M_{\pm}'=M_{\mp}$. Hence we regard them as equivalent. The class $(4,5)$ would be replaced by $(5,4)$ for the sake of consistency (see Section \ref{secg42245}). }\label{footnote1} write $F(x)$ as $|x|^4$ minus some given \emph{sos} form (see (\ref{g422}, \ref{g445}, \ref{FKM isop. poly.})), thus $G_F^-$ is automatically \emph{sos}. However, it turns out that only in a few (though still infinitely many) classes $G_F^+$ is \emph{sos}.
  For the sake of clarity, we list the classification of these \emph{sos} forms in the following tables, where $k\in\mathbb{N}$, $(4,3)^I$ denotes the unique OT-FKM type with $(m_+,m_-)=(4,3)$ of the indefinite class, $G_F^{\pm}, H_F$ are \emph{psd} forms in (\ref{psdform}) with $F$ expressed in (\ref{Cartanpoly}, \ref{Cartanpoly248}), (\ref{g422}, \ref{g445}, \ref{FKM isop. poly.}), (\ref{isopg6}) for $g=3,4,6$ respectively.
\begin{table}[h]
\caption{Classification of \emph{sos} forms $G_F^{\pm}, H_F$ for $g= 1,2,3,6$} \label{table-1}
\centering
\begin{tabular}{|c|c|c|c|c|}
\hline
$g$ & 1 & 2 & 3&6\\
\hline
$G_F^+$ & -- & \emph{sos} & -- & \emph{non-sos}\\
\hline
$G_F^-$ & -- & \emph{sos} & -- & \emph{non-sos} \\
\hline
$H_F$ & \emph{sos}  & \emph{sos} & \emph{sos} & \emph{sos}\\
\hline
\end{tabular}
\end{table}
\begin{table}[h]
\caption{Classification of \emph{sos} forms $G_F^{\pm}, H_F$ for $g=4$} \label{table-2}
\centering
\begin{tabular}{|c|c|c|c|c|c|c|c|c|c|}
\hline
$(m_+,m_-)$ & $(2,2)$ & $(5,4)$ & $(1,k)$& $(2,2k-1)$ & $(3,4)$ & $(4,3)^I$ & $(5,2)$ &$(6,1)$ & others\\
\hline
$G_F^+$ & \emph{non-sos} & \emph{non-sos} & \emph{sos} & \emph{sos} & \emph{sos} & \emph{sos} & \emph{sos} & \emph{sos} & \emph{non-sos}\\
\hline
$G_F^-$ & \emph{sos} & \emph{sos} & \emph{sos} & \emph{sos} & \emph{sos} & \emph{sos} & \emph{sos} & \emph{sos} &\emph{sos}\\
\hline
$H_F$ & \emph{sos}  & \emph{sos} & \emph{sos} & \emph{sos} & \emph{sos} & \emph{sos} & \emph{sos} & \emph{sos} &\emph{sos}\\
\hline
\end{tabular}
\end{table}

For the \emph{non-sos} \emph{psd} forms $G_F^{\pm}$, we give them a simple and explicit \emph{sosr} expression with a uniform denominator $|x|^2$ for $g=4$ and $|x|^4$ for $g=6$ in Section \ref{secg3}. Note that these forms are not positive definite, as they have non-trivial zero sets ($\mathbb{R}M_{\mp}$ for $G_F^{\pm}$). These examples are the supplement to Artin's theorem on Hilbert's 17th problem, which is beyond the scope of P\'{o}lya and Reznick's theorem. Note also that $G_F^{\pm}$ have infinitely many zeroes and these non-sos psd forms have at least $8$ variables. This can be compared with a low dimensional rigidity result of Choi-Lam-Reznick \cite{CLR80} which shows that a \emph{psd} form in $P_{4,4}$ or $P_{3,6}$ with more than $11$ or $10$ projective zeroes must be \emph{sos}.

It needs to be emphasized, that the zeroes of $G_F^{\pm}$ are also of special importance because of their
 rich geometric properties as the focal submanifolds of isoparametric hypersurfaces in $\mathbb{S}^{n-1}$.
 For example, they are austere submanifolds (thus minimal) with constant principal curvatures independent of the choice of normal directions (cf. \cite{HL82, GTY18}).
 For the cases $g=4$ (resp. $g=6,  m_+=m_-=1$) with $(m_+,m_-)=(2,2)$, $(5,4)$, $(4k, l-4k-1)^D$ (OT-FKM type with $m_+\equiv0~(mod~ 4)$ of the definite class),
 we have shown a stronger result that, \textit{any quadratic form $($resp. cubic form$)$ vanishing on $(G_F^+)^{-1}(0)\cap\mathbb{S}^{n-1}=M_-$ $($resp. either of $M_{\pm}$$)$ is identically zero}, which implies the \emph{non-sos} property of $G_F^+$. In particular, the focal submanifold $M_-$ is not quadratic (resp. cubic). This answers partially an important question of Solomon \cite{So92}.
In fact, Solomon \cite{So92} had gotten the \emph{sos} cases of $G_F^{\pm}$ of Table \ref{table-2} (with $(3,4)$ and $(4,3)^I$ cases missing). He remarked that, the question as to whether both focal varieties might be quadratic seems difficult in general. This is important for estimates of eigenvalues and eigenfunctions of the Laplacian on isoparametric hypersurfaces, as Solomon showed that each quadratic form vanishing on one focal submanifold is an eigenfunction on every isoparametric hypersurface and the other focal submanifold in that family.

In Section \ref{sec-6}, besides further discussion on the zeroes of $G_F^+$ and the Solomon question, we provide some clearer formulae of the \emph{psd} forms $G_F^+$ for the isoparametric polynomials of OT-FKM type. For example, we get the interesting \emph{psd} forms $G_{km}$ for $m=1,2,3,4$ (see \ref{CS-non-sos1}, \ref{CS-non-sos2}, \ref{CS-sos3}, \ref{CS-sos4}), including an elementary \emph{non-sos} \emph{psd} form:
\begin{equation*}
G_{k4}(X,Y):=|X|^2|Y|^2-|\langle X,Y\rangle_{\mathbb{H}}|^2\in P_{8k,4}\setminus\Sigma_{8k,4}, \quad \textit{for}~X,Y\in \mathbb{H}^k,~k\geq2.
\end{equation*}
This immediately shows that the Cauchy-Schwarz inequality holds but Lagrange's identity does not hold for quaternions. By the \emph{sos} expression of $H_F$, we will also give an explicit \emph{sosr} expression of $G_{k4}$ with a uniform denominator (see the identity (\ref{sosr-Lag})), which generalizes Lagrange's identity for quaternions. Moreover, we will discuss some applications to orthogonal multiplications, and to the \emph{sos} problem on the Grassmannian $Gr_2(\mathbb{R}^l)$ that relates closely to the celebrated result of  Blekherman-Smith-Velasco \cite{BSV16} and to the \emph{sos} problem of Harvey-Lawson \cite{HL82}.

Though classified completely via many efforts, isoparametric hypersurfaces in unit spheres deserve to be even more attractive research objects. Because round spheres and Clifford tori are $g=1$ and $g=2$ isoparametric hypersurfaces with appropriate convexity, they are technically easier to be treated on many rigidity problems in geometric analysis than those with $g\geq3$. From this point of view, our study of the \emph{sos} problem on all isoparametric polynomials provides such an example of attempt. In particular, this algebraic study has various applications to geometry, e.g., as mentioned, to the Solomon question on eigenvalue's estimates. 

\section{General results from Cartan-M\"{u}nzner equation}\label{secg3}
In this section, we present some general results that can be easily deduced from the Cartan-M\"{u}nzner equation (\ref{CMeq}), including (i) that the \emph{psd} forms $H_F$ in (\ref{psdform}) are always \emph{sos}, and (ii) that the \emph{psd} forms $G_F^{\pm}$ in (\ref{psdform}) can be expressed as a sum of squares of rational functions with a uniform denominator $|x|^2$ for $g=4$ and $|x|^4$ for $g=6$. We also provide explicit formulae for the first nontrivial case when isoparametric polynomials are of degree $g=3$. Explicit formulae for $g=4,6$ will be provided in sections later.

Let $F(x)$ be an isoparametric polynomial of degree $g\in\{1,2,3,4,6\}$ and $H_F(x):=|x|^{2g}-F(x)^2$ be the \emph{psd} form as in (\ref{psdform}). We first show
\begin{prop}\label{prop-HF}
$H_F$ is \emph{sos}, i.e., a sum of squares of forms of degree $g$.
\end{prop}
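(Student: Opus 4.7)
The plan is to combine three standard identities: Euler's formula for homogeneous polynomials, the first line of the Cartan-Münzner equation (\ref{CMeq}), and Lagrange's identity applied to the pair of vectors $x$ and $\nabla F$ in $\mathbb{R}^n$. Each of these ingredients is elementary, and together they produce $H_F$ directly as a sum of squares, with the squared terms being forms of the desired degree $g$.

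Explicitly, I would start with Lagrange's identity
\begin{equation*}
|x|^{2}\,|\nabla F|^{2}-\langle x,\nabla F\rangle^{2}=\sum_{1\leq i<j\leq n}\bigl(x_{i}\partial_{j}F-x_{j}\partial_{i}F\bigr)^{2}.
\end{equation*}
Since $F$ is homogeneous of degree $g$, Euler's formula gives $\langle x,\nabla F\rangle=g\,F(x)$, and the Cartan-Münzner equation gives $|\nabla F|^{2}=g^{2}|x|^{2g-2}$. Substituting these into the left-hand side yields
\begin{equation*}
g^{2}\bigl(|x|^{2g}-F(x)^{2}\bigr)=\sum_{1\leq i<j\leq n}\bigl(x_{i}\partial_{j}F-x_{j}\partial_{i}F\bigr)^{2},
\end{equation*}
so that $H_{F}=\frac{1}{g^{2}}\sum_{i<j}(x_{i}\partial_{j}F-x_{j}\partial_{i}F)^{2}$. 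Each $\partial_{j}F$ is a form of degree $g-1$, hence each $x_{i}\partial_{j}F-x_{j}\partial_{i}F$ is a form of degree $g$, and the claim follows.

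There is essentially no main obstacle here; the proposition is a one-line consequence of the Cartan-Münzner equation once one thinks to pair $x$ with $\nabla F$ in Lagrange's identity. The only mild care required is to note that the formula works uniformly for every admissible degree $g\in\{1,2,3,4,6\}$, and in particular does not require $g$ to be even, which is why $H_{F}$ (unlike $G_{F}^{\pm}$) is always \emph{sos} across all cases in Table \ref{table-1}.
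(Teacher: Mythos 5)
Your proof is correct and follows exactly the same approach as the paper: combining Euler's formula, the Cartan--M\"unzner equation, and Lagrange's identity applied to the pair $x$ and $\nabla F$. The only cosmetic difference is that you write out the Lagrange identity explicitly as $\sum_{i<j}(x_i\partial_j F - x_j\partial_i F)^2$, whereas the paper uses the compact notation $|\nabla F\wedge x|^2$.
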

\begin{proof}
As $F(x)$ is homogeneous of degree $g$, $\langle \nabla F(x),x\rangle=gF(x)$ by Euler's formula.
Then the conclusion follows directly from Lagrange's identity and the Cartan-M\"{u}nzner equation (\ref{CMeq}):
\begin{equation*}\label{lageq}
|\nabla F(x)\wedge x|^2=|\nabla F(x)|^2|x|^2-\langle \nabla F(x),x\rangle^2=g^2(|x|^{2g}-F(x)^2)=g^2H_F(x),
\end{equation*}
where $\wedge$ is the exterior product.
\end{proof}

For $g=2,4,6$, let $G_F^{\pm}(x):=|x|^g\pm F(x)$ be the \emph{psd} forms as in (\ref{psdform}). We have
\begin{prop}\label{prop-sosrx24}
For even $g$, $|x|^{g-2}G_F^{\pm}(x)$ is \emph{sos}, i.e., a sum of squares of forms of degree $g-1$.
\end{prop}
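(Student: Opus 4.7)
The proof plan mirrors the Lagrange-identity trick used in Proposition~\ref{prop-HF}, but with the vector $x$ replaced by a ``radial'' polynomial vector field so that the cross term produces $F(x)$ linearly rather than quadratically. The key observation is that for even $g$, the function $|x|^{g-2}=(|x|^2)^{(g-2)/2}$ is a polynomial, so the vector field $g|x|^{g-2}x\in\mathbb{R}^n$ has polynomial components of degree $g-1$. The plan is therefore to consider the two candidate vectors
\begin{equation*}
V_{\pm}(x):=\nabla F(x)\pm g|x|^{g-2}x,
\end{equation*}
whose $n$ components are each forms of degree $g-1$, and to show that $|V_{\mp}|^2=2g^2|x|^{g-2}G_F^{\pm}(x)$.

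To carry this out I would just expand $|V_{\pm}|^2=|\nabla F|^2\pm 2g|x|^{g-2}\langle\nabla F,x\rangle+g^2|x|^{2g-2}$ and substitute the two identities already exploited in the previous proposition: the Cartan--M\"unzner equation $|\nabla F|^2=g^2|x|^{2g-2}$ and Euler's formula $\langle\nabla F,x\rangle=gF(x)$. The first and third terms combine into $2g^2|x|^{2g-2}$, and the middle term becomes $\pm 2g^2|x|^{g-2}F(x)$, so that
\begin{equation*}
|V_{\pm}(x)|^2=2g^2|x|^{g-2}\bigl(|x|^g\pm F(x)\bigr)=2g^2|x|^{g-2}G_F^{\pm}(x),
\end{equation*}
giving the desired expression of $|x|^{g-2}G_F^{\pm}(x)$ as $\tfrac{1}{2g^2}\sum_{i=1}^n \bigl(\partial_i F\pm g|x|^{g-2}x_i\bigr)^2$, a sum of squares of forms of degree $g-1$.

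There is no real obstacle: the evenness of $g$ is used exactly once, to ensure $|x|^{g-2}$ is a polynomial (and hence $V_{\pm}$ has polynomial entries of the correct degree $g-1$); the rest is a one-line algebraic check. The only thing to be careful about is degree bookkeeping, i.e.\ that $\partial_i F$ has degree $g-1$ and $|x|^{g-2}x_i$ has degree $g-1$ as well, so that their squares contribute to $\Sigma_{n,2g-2}$ and divide out cleanly after multiplying the left-hand side by $|x|^{g-2}$.
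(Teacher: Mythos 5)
Your proof is correct and is essentially the same as the paper's: the paper simply observes that your vector $V_{\pm}$ is (up to overall sign) the gradient $\nabla G_F^{\pm}$ and then performs the identical expansion using Euler's formula and the Cartan--M\"unzner equation. The only small slip is the stray ``$|V_{\mp}|^2$'' in your opening sentence; your own computation correctly shows $|V_{\pm}|^2=2g^2|x|^{g-2}G_F^{\pm}(x)$.
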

\begin{proof}
Taking the gradient of $G_F^{\pm}(x)$, we have
$$\nabla G_F^{\pm}(x)=g|x|^{g-2}x\pm \nabla F(x).$$
Using Euler's formula and the Cartan-M\"{u}nzner equation (\ref{CMeq}), we get
\begin{equation*}
|\nabla G_F^{\pm}(x)|^2=g^2|x|^{2g-2}+|\nabla F(x)|^2\pm 2g |x|^{g-2}\langle x, \nabla F(x)\rangle
=2g^2|x|^{g-2}(|x|^g\pm F(x)).
\end{equation*}
Thus $|x|^{g-2}G_F^{\pm}(x)=|\nabla G_F^{\pm}(x)|^2/2g^2$ is \emph{sos}.
\end{proof}

As introduced in Section \ref{secintro}, Cartan classified isoparametric hypersurfaces in unit spheres with $g=3$, showing that they are tubes around one of the four Veronese projective planes $M_{\pm}\cong\mathbb{FP}^2\subset\mathbb{S}^{3m+1}$ for $\mathbb{F}=\mathbb{R},\mathbb{C},\mathbb{H},\mathbb{O}$ with $m_+=m_-=m=1,2,4,8$.
Cartan's isoparametric polynomial on $\mathbb{R}^5$ with $g=3, m=1$ is defined by
\begin{equation}\label{Cartanpoly}
F_C(x)=x_0^3+\frac{3}{2}x_0(x_2^2+x_3^2-2x_4^2-2x_1^2)   +\frac{3\sqrt{3}}{2}x_1(x_2^2-x_3^2)+3\sqrt{3} x_2x_3x_4,
\end{equation}
for $x=(x_0,\cdots,x_4)\in \mathbb{R}^5$, which will be used in Section \ref{secg6} for the case $g=6, m=1$.
The other three Cartan polynomials on $\mathbb{R}^{8}, \mathbb{R}^{14}, \mathbb{R}^{26}$ with $g=3, m=2,4,8$ can be defined similarly as
\begin{equation}\label{Cartanpoly248}
\begin{aligned}
F_C(x_0,x_1,X_2,X_3,X_4)&=
x_0^3+\frac{3}{2}x_0\Big(|X_2|^2+|X_3|^2-2|X_4|^2-2x_1^2\Big)  \\
&\quad  +\frac{3\sqrt{3}}{2}x_1\Big(|X_2|^2-|X_3|^2\Big)+3\sqrt{3}\oRe(X_2X_3X_4),
\end{aligned}
\end{equation}
where $x_0,x_1\in\mathbb{R}$, $X_2,X_3,X_4\in\mathbb{C},\mathbb{H},\mathbb{O}$ for $m=2,4,8$, respectively, and $\oRe$ denotes the real part.
Note that the two focal submanifolds $M_{\pm}=F_C^{-1}(\pm1)\cap \mathbb{S}^{3m+1}\cong \mathbb{FP}^2$ are antipodal to each other in the sphere and their union $M_+\cup M_-$ is exactly the set of spherical zeroes of the \emph{sos} form $H_{F_C}\in P_{3m+2, 6}$ in (\ref{psdform}). Hence $M_+\cup M_-$ is a cubic variety but separately neither of $M_{\pm}$ is cubic.  
This is different from the case of $g=4$, where $M_+\cup M_-$ is a quartic variety as zeroes of $H_F$ but always non-quadratic as shown by Solomon \cite{So92}.
Moreover, $M_+$ is always quadratic as zeroes of the \emph{sos} quartic form $G_F^-$ while $M_-$ is often non-quadratic as we will show in the following sections.

\section{On isoparametric with $g=4$, $(m_+,m_-)=(2,2),(5,4)$}\label{secg42245}
In this section, for the two exceptional homogeneous isoparametric hypersurfaces with $g=4$, $(m_+,m_-)=(2,2),(5,4)$ in $\mathbb{S}^9$ and $\mathbb{S}^{19}$, respectively, we prove that any quadratic form vanishing on $M_-$ is identically zero, which implies that $M_-$ is non-quadratic and the \emph{psd} form $G_F^+$ of (\ref{psdform}) is \emph{non-sos}. According to Solomon \cite{So92}, the corresponding isoparametric polynomials $F(x)$ are given by (\ref{g422}) and (\ref{g445}) below, which immediately shows that $G_F^-$ is \emph{sos} and $M_+$ is quadratic in both cases.

Before the proof, we first prepare two lemmas. This part treats with polynomials in $\mathbb{R}^5$.
Consider the Horn form $h$ and the Choi-Lam quartic \emph{non-sos} \emph{psd} form $H$ \cite{CL77}:
$$\begin{aligned}
& h(x_1,\cdots,x_5)=(x_1+\cdots +x_5)^2-4(x_1x_2+x_2x_3+x_3x_4+x_4x_5+x_5x_1), \\
& H(x_1,\cdots,x_5)=h(x_1^2,\cdots,x_5^2).
\end{aligned}$$
Denote by $\mathcal{Z}$ the spherical zeroes of $H$, that is,  $\mathcal{Z}=\{x\in \mathbb{S}^{4}\mid H(x)=0\}$.
\begin{lem}\label{lem}
$\mathcal{Z}$ is a union of ten circles. More precisely,

$\mathcal{Z}=S_1^{\pm} \cup S_2^{\pm} \cup S_3^{\pm} \cup S_4^{\pm} \cup S_5^{\pm} $,
where
 $S_1^{\pm}=\{(\pm\frac{1}{\sqrt{2}},a,0,0,b)\mid a^2+b^2=\frac{1}{2} \} $,\\
 $S_2^{\pm}=\{(b,\pm\frac{1}{\sqrt{2}},a,0,0)\mid a^2+b^2=\frac{1}{2} \} $,
 $S_3^{\pm}=\{(0,b,\pm\frac{1}{\sqrt{2}},a,0)\mid a^2+b^2=\frac{1}{2} \} $,\\
 $S_4^{\pm}=\{(0,0,b,\pm\frac{1}{\sqrt{2}},a)\mid a^2+b^2=\frac{1}{2} \} $,
 $S_5^{\pm}=\{(a,0,0,b,\pm\frac{1}{\sqrt{2}})\mid a^2+b^2=\frac{1}{2} \} $.
\end{lem}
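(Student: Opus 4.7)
The plan is to reduce the identification of $\mathcal{Z}$ to a constrained extremization on the standard $4$-simplex and to carry out a stratified case analysis based on the support of $y=(x_1^2,\ldots,x_5^2)$.

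Setting $y_i = x_i^2$, the condition $x \in \mathbb{S}^4$ becomes $y \in \Delta := \{y \in \mathbb{R}^5_{\geq 0} : \sum_i y_i = 1\}$. Since $(\sum_i y_i)^2 = 1$ on $\Delta$, the equation $H(x) = 0$ translates to
\[
g(y) := \sum_{i \in \mathbb{Z}/5} y_i y_{i+1} = \tfrac{1}{4}.
\]
Because $H$ is a nonnegative (Choi--Lam) form, the task reduces to showing that $1/4$ is the maximum of $g$ on $\Delta$ and characterizing all its maximizers.

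To locate the maximizers I would first rule out interior critical points: the Lagrange condition $\partial g/\partial y_j = \mathrm{const}$ forces $y_{j-1} + y_{j+1}$ to be independent of $j$, which by the cyclic structure of $\mathbb{Z}/5$ uniquely yields $y=(1/5,\ldots,1/5)$, where $g = 1/5 < 1/4$. Hence every maximizer lies on the boundary. Stratifying by the support $S = \{i : y_i > 0\}$ and exploiting the $\mathbb{Z}/5$-cyclic symmetry, one runs through a handful of cases distinguished by $|S|$ and the cyclic type of $S$: size $\le 2$; size $3$ either consecutive or non-consecutive; size $4$; and size $5$. In each case the restricted extremization reduces to a one- or two-variable problem on a face of $\Delta$, and a direct calculation shows that $g(y) = 1/4$ occurs precisely when some $i\in\mathbb{Z}/5$ has $y_{i+2} = y_{i+3} = 0$, the opposite vertex $y_i = 1/2$, and the remaining coordinates satisfy $y_{i-1}, y_{i+1} \geq 0$ with $y_{i-1}+y_{i+1}=1/2$.

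Lifting back via $x_j = \pm\sqrt{y_j}$ then yields the ten circles: the distinguished coordinate $x_i = \pm 1/\sqrt{2}$ supplies the two signs $\pm$ in $S_i^{\pm}$, while the two free coordinates $x_{i-1},x_{i+1}$ combine into a real parameter $(a,b)$ with $a^2+b^2 = 1/2$ (the two signs of the square roots merging into a signed circle); the five cyclic positions of the vanishing edge give the five pairs $S_i^{\pm}$, for a total of $10$ circles. The main obstacle is the boundary case analysis: one must carefully verify that no ``spurious'' configurations (e.g.\ three non-consecutive positive coordinates, or interior critical points on lower-dimensional faces) can match $g=1/4$, and must handle the overlaps between circles at points like $(\pm 1/\sqrt{2},\pm 1/\sqrt{2},0,0,0)$ where $a$ or $b$ vanishes. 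The cyclic pentagonal symmetry, together with the structural observation that any maximizer of $g$ on $\Delta$ must have two adjacent zero coordinates, is the heart of the argument.
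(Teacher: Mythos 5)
Your approach is correct in substance but genuinely different from the paper's. The paper's proof rests on two specific algebraic decompositions of the Horn form,
\[
h=(x_1-x_2+x_3+x_4-x_5)^2+4x_2x_4+4x_3(x_5-x_4)
\quad\text{and}\quad
h=(x_1-x_2+x_3-x_4+x_5)^2+4x_2x_5+4x_1(x_4-x_5),
\]
used on the complementary regions $x_5^2\ge x_4^2$ and $x_5^2\le x_4^2$ respectively; after the substitution $x_i\mapsto x_i^2$ every summand is nonnegative, and the zero set is extracted by forcing each summand to vanish (a short but fiddly case-chase the paper omits). You instead pass to $y_i=x_i^2$ on the simplex $\Delta$, reformulate $H=0$ as the equality case of $g(y)=\sum_{i\in\mathbb{Z}/5}y_iy_{i+1}\le 1/4$, and locate the maximizers by support stratification with Lagrange multipliers. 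I checked the stratification: the interior critical point on each of the $|S|=5$ and $|S|=4$ faces either is not a maximizer or is pushed to the boundary, the non-consecutive $|S|=3$ faces give a strict bound $<1/4$ on their relative interiors, and the consecutive $|S|=3$ faces (together with their $|S|=2$ boundary points) give exactly the ten circles after lifting $x_j=\pm\sqrt{y_j}$. Your route is conceptually cleaner because it isolates the structural fact that every maximizer of $g$ must have two cyclically adjacent vanishing coordinates, and it exploits the full $D_5$-symmetry rather than a hand-picked dichotomy on $x_4^2\gtrless x_5^2$; the paper's decompositions are more elementary but more ad hoc. Two small remarks: you describe the restricted problems as ``one- or two-variable,'' which is literally accurate only for $|S|\le 3$ (the $|S|=4,5$ faces are $3$- and $4$-dimensional; there the point is the absence of interior critical points, which you do invoke), and, like the paper, you leave the case analysis as a sketch rather than writing it out.
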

\begin{proof} Observe that there are two equalities
\begin{equation*}
\begin{aligned}
h=(x_1-x_2+x_3+x_4-x_5)^2+4x_2x_4+4x_3(x_5-x_4),  \\
h=(x_1-x_2+x_3-x_4+x_5)^2+4x_2x_5+4x_1(x_4-x_5).
\end{aligned}
\end{equation*}
It follows that $H(x)\geq 0$ for $x^2_5\geq x_4^2$ by the first equality, and
$H(x)\geq 0$ for $x^2_5 \leq x_4^2$ by the second equality. Thus, $H$ is indeed a \emph{psd} form. To determine the spherical zeroes,
we consider two cases when $x^2_5\geq x_4^2$ and when $x^2_5\leq x_4^2$,
and make use of the two equalities mentioned above respectively. The determination of $\mathcal{Z}$ is complicated but elementary
and will be omitted.
\end{proof}

\begin{lem}\label{lem}\label{lemg422}
Any quadratic form $P$ vanishing on $\mathcal{Z}$ is identically zero. In particular, $\mathcal{Z}$ is not quadratic.
\end{lem}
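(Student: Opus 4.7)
The plan is to exploit the explicit description of $\mathcal{Z}$ in the previous lemma as a union of ten circles, each of which is a one-parameter family that forces strong vanishing on the coefficients of $P$. Write $P(x)=\sum_{i\leq j}a_{ij}x_ix_j$, so $P$ lies in the $15$-dimensional space of quadratic forms on $\mathbb{R}^5$; I will show that the restrictions $P|_{S_k^{\pm}}\equiv 0$ for $k=1,\dots,5$ produce enough linear conditions on the $a_{ij}$ to force them all to vanish.

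First I would handle the pair $S_1^{\pm}$. Parametrize $S_1^{\pm}$ by $a=\tfrac{1}{\sqrt{2}}\cos\theta$, $b=\tfrac{1}{\sqrt{2}}\sin\theta$, and substitute into $P$. The restriction $P|_{S_1^{\pm}}$ becomes a trigonometric polynomial in $\theta$ whose coefficients are linear combinations of $a_{11},a_{22},a_{55},a_{12},a_{15},a_{25}$ (with the sign $\pm$ appearing in front of $a_{12},a_{15}$). Writing the expression in the Fourier basis $\{1,\cos\theta,\sin\theta,\cos 2\theta,\sin 2\theta\}$ and demanding identical vanishing for both signs, I obtain
\begin{equation*}
a_{22}=a_{55},\qquad a_{11}=-\tfrac{1}{2}(a_{22}+a_{55}),\qquad a_{12}=a_{15}=a_{25}=0.
\end{equation*}

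Next, the indexing of the circles $S_k^{\pm}$ is invariant under the cyclic permutation $i\mapsto i+1\ (\mathrm{mod}\ 5)$, so exactly the same analysis applied to $S_k^{\pm}$ yields the cyclically shifted relations. Combining the diagonal relations from all five pairs gives $a_{11}=a_{22}=a_{33}=a_{44}=a_{55}$ (from the $\cos 2\theta$ coefficients) together with $a_{ii}=-a_{i+1,i+1}$ (from the constant terms), forcing $a_{ii}=0$ for each $i$. For the off-diagonal entries, each $S_k^{\pm}$ kills three of them, and running over $k=1,\dots,5$ covers all ten off-diagonal pairs $\{i,j\}$ (those at cyclic distance $1$ from the $\sin\theta$ or $\cos\theta$ coefficient, and those at cyclic distance $2$ from the $\sin 2\theta$ coefficient). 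Thus every $a_{ij}=0$ and $P\equiv 0$.

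The argument is essentially a linear algebra exercise once the parametrization is in place, so there is no serious obstacle; the only point requiring care is to verify that all ten off-diagonal monomials actually appear in some restriction $P|_{S_k^{\pm}}$, which is ensured by the fact that in the parametrization of $S_k^{\pm}$ every index except the two zero coordinates enters nontrivially, and cyclically shifting $k$ sweeps out all adjacent and next-adjacent index pairs in $\mathbb{Z}/5\mathbb{Z}$. The final statement ``$\mathcal{Z}$ is not quadratic'' is then immediate: the only quadratic form vanishing on $\mathcal{Z}$ is the zero form, so $\mathcal{Z}$ cannot be cut out as the common zero locus of quadrics on $\mathbb{S}^4$.
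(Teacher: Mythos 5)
Your proof is correct and follows essentially the same route as the paper's: parametrize each of the ten circles, expand the restriction of $P$ as a trigonometric polynomial in $\theta$, extract linear conditions on the coefficients from the vanishing of each Fourier mode, and then combine across all five pairs to kill every $a_{ij}$; the paper phrases this via three-variable restrictions such as $P(x_1,x_2,x_3,0,0)=\lambda_1(x_1^2-x_2^2+x_3^2)$ while you organize it directly by circle and invoke the $\mathbb{Z}/5$ cyclic symmetry of the Horn form to avoid repeating the computation. One small slip: the constant-term relation actually reads $a_{kk}=-\tfrac{1}{2}(a_{k-1,k-1}+a_{k+1,k+1})$ rather than ``$a_{ii}=-a_{i+1,i+1}$'' as you summarized it, but once you have the $\cos 2\theta$ relations forcing all five diagonal entries to be equal this still yields $a_{kk}=-a_{kk}$ and hence $a_{kk}=0$, so the conclusion stands.
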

\begin{proof}
Suppose that $P=P(x_1,\cdots, x_5)$ is a
quadratic form vanishing on $\mathcal{Z}$.
We observe that $P(x_1,x_2,x_3,0,0)$ vanishing on $S_2^{\pm}$ is of the form
$\lambda(x_1^2-x_2^2+x_3^2) $ with $\lambda$ being a real number. Similarly, the corresponding conclusions hold for $P(0,x_2,x_3,x_4,0)$,  $P(0,0,x_3,x_4,x_5)$, $P(x_1,0,0,x_4,x_5)$ and $P(x_1,x_2,0,0,x_5)$ on
 $S_3^{\pm}$, $S_4^{\pm}$, $S_5^{\pm}$ and $S_1^{\pm}$, respectively.
 Clearly, these imply that $P$ is identically zero.
 \end{proof}

Let us now consider the isoparametric polynomial $F$ with $g=4$, $(m_+,m_-)=(2,2)$. According to Solomon \cite{So92}, $F$ comes from
the map
$$ |(X\wedge X)^*|^2\;\quad \;\textit{for} \;X\in \Lambda^2(\mathbb{R}^5)\cong \mathbb{R}^{10}, $$
where $\wedge$ is the exterior product, and $*$ is the Hodge star operator $* : \Lambda^4(\mathbb{R}^5) \longrightarrow \Lambda^1(\mathbb{R}^5)\cong \mathbb{R}^5. $
Choose an oriented orthonormal basis $\{e_1,\cdots,e_5\}$ in $\mathbb{R}^5$.
Represent
 $$\begin{aligned}X&=x_1e_1\wedge e_2 + x_2e_1\wedge e_3 + x_3e_1\wedge e_4 + x_4e_1\wedge e_5 +x_5e_2\wedge e_3\\
    &\quad +x_6e_2\wedge e_4 + x_7e_2\wedge e_5 + x_8e_3\wedge e_4 + x_9e_3\wedge e_5 +x_{10}e_4\wedge e_5. \end{aligned}$$
It is clear that
$$\begin{aligned}
&\frac{1}{2}(X\wedge X)^*= (x_5x_{10}-x_6x_9+x_7x_8)e_1   +   (-x_2x_{10}+x_3x_9-x_4x_8)e_2 \\
&\quad\quad +(x_1x_{10}-x_3x_7+x_4x_6)e_3  +   (-x_1x_9+x_2x_7-x_4x_5)e_4  + (x_1x_8-x_2x_6+x_3x_5)e_5. \end{aligned}$$
The corresponding isoparametric polynomial $F(X)=|X|^4-2|(X\wedge X)^*|^2$ is
\begin{equation}\label{g422}
\begin{aligned}
   F(x_1,\cdots, x_{10})&=(x_1^2+\cdots+ x_{10}^2)^2-8\Big\{ (x_5x_{10}-x_6x_9+x_7x_8)^2 \\
     &\quad  +   (-x_2x_{10}+x_3x_9-x_4x_8)^2 +(x_1x_{10}-x_3x_7+x_4x_6)^2\\
     &\quad  +   (-x_1x_9+x_2x_7-x_4x_5)^2  + (x_1x_8-x_2x_6+x_3x_5)^2 \Big\}
 \end{aligned}
 \end{equation}
Clearly, the focal submanifold $M_+=F^{-1}(+1)\cap \mathbb{S}^9\cong Gr_2(\mathbb{R}^5)$ is quadratic and $G_F^-$ of (\ref{psdform}) is \emph{sos}. For the other focal submanifold $M_-=F^{-1}(-1)\cap \mathbb{S}^9\cong \mathbb{CP}^3$,
we have
\begin{thm}\label{thmg422}
Any quadratic form $Q=Q(x_1, \cdots,x_{10})$ vanishing on $M_-$ is identically zero. In particular, $M_-$ is not quadratic and $G_F^+$ of $(\ref{psdform})$ on $\mathbb{R}^{10}$ is \emph{non-sos}.
\end{thm}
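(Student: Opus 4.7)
The plan is to reduce to Lemma~\ref{lemg422} (quadratic rigidity of the Choi-Lam zero set $\mathcal{Z}$) by restricting $F$ to a carefully chosen $5$-dimensional subspace of $\mathbb{R}^{10}\cong\Lambda^2(\mathbb{R}^5)$ on which $G_F^+$ becomes (up to a factor of $2$) the Choi-Lam quartic $H$, and then propagating the resulting vanishing of $Q$ using the $S_5 \subset O(5)$-symmetry of $F$ together with a short combinatorial observation on the complete graph $K_5$.

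For the Hamiltonian $5$-cycle $C_0=(1,2,3,4,5)$ in $K_5$, set
\[
V_{C_0}:=\mathrm{span}\{e_1\wedge e_2,\; e_2\wedge e_3,\; e_3\wedge e_4,\; e_4\wedge e_5,\; e_5\wedge e_1\}.
\]
Parameterize $X\in V_{C_0}$ as $X=\sum_{i=1}^{5} y_i\, e_i\wedge e_{i+1}$ (indices mod $5$) and plug into the explicit expansion of $\tfrac12(X\wedge X)^*$ recorded just before (\ref{g422}). A direct inspection shows that every off-cycle summand vanishes and only the distance-$2$ products $y_iy_{i+2}$ survive, giving
\[
F|_{V_{C_0}}=\Big(\sum_i y_i^2\Big)^{2}-8\sum_i y_i^2 y_{i+2}^2.
\]
Since $i\mapsto 2i$ is an automorphism of $\mathbb{Z}/5$, the relabeling $z_i:=y_{2i\bmod 5}$ turns distance-$2$ pairs into adjacent ones, so
\[
G_F^+|_{V_{C_0}} = |X|^4 + F|_{V_{C_0}} = 2\Big(\sum_i z_i^2\Big)^{2} - 8\sum_i z_i^2 z_{i+1}^2 = 2\,H(z_1,\ldots,z_5).
\]
Hence $M_-\cap V_{C_0}\cap\mathbb{S}^9$ is carried onto the Choi-Lam zero set $\mathcal{Z}$, and Lemma~\ref{lemg422} yields $Q|_{V_{C_0}}\equiv 0$ for any quadratic $Q$ vanishing on $M_-$.

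To upgrade this to $Q\equiv 0$, note that $S_5\subset O(5)$ (acting by permutations of the basis of $\mathbb{R}^5$) acts on $\Lambda^2(\mathbb{R}^5)$ by permuting the basis $\{e_i\wedge e_j\}$ and preserves both $|X|^4$ and $|(X\wedge X)^*|^2$; in particular it preserves $F$ and $M_-$. For every $\sigma\in S_5$, the pullback $\sigma^*Q$ again vanishes on $M_-$, so the argument above applied to $\sigma^*Q$ gives $Q|_{V_{\sigma(C_0)}}\equiv 0$. Since $S_5$ is transitive on the twelve Hamiltonian cycles of $K_5$, we obtain $Q|_{V_C}\equiv 0$ for every such $C$. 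The elementary combinatorial fact that \emph{any two edges of $K_5$ lie in a common Hamiltonian cycle} — adjacent edges extend along a path and two disjoint edges can be bridged through the remaining vertex — then forces every coefficient of $Q$ to vanish, so $Q\equiv 0$. The non-\emph{sos} conclusion is standard: any representation $G_F^+=\sum_j h_j^2$ may be taken with $h_j$ homogeneous of degree $2$, each such $h_j$ must vanish on $M_-=(G_F^+)^{-1}(0)\cap\mathbb{S}^9$, hence by what we just proved $h_j\equiv 0$, contradicting $G_F^+\not\equiv 0$.

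The main obstacle is the explicit identification $G_F^+|_{V_{C_0}}=2H$: one must verify that, among the ten quadratic terms in $\tfrac12(X\wedge X)^*$, exactly the five distance-$2$ products along $C_0$ survive when $X$ is restricted to $V_{C_0}$, and then exploit the arithmetic coincidence that $2$ generates $(\mathbb{Z}/5)^\times$ in order to realize the distance-$2$ pattern as the adjacency pattern of a relabeled $5$-cycle. Once this ``miracle'' is checked, the $S_5$-propagation and the combinatorial step on $K_5$ are entirely routine.
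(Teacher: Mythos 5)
Your proof is correct, and it takes a genuinely more streamlined route than the paper's, though both hinge on the same key observation. Like the paper (whose steps (I)--(IV) are precisely restrictions to coordinate subspaces spanned by Hamiltonian $5$-cycles in $K_5$, giving the Choi--Lam quartic $H$ up to a variable permutation), you use the identification $G_F^+|_{V_C}=2H$ for a Hamiltonian cycle $C$ and then invoke Lemma~\ref{lemg422}. The paper, however, never invokes the $O(5)$-invariance of $F(X)=|X|^4-2|(X\wedge X)^*|^2$; instead it picks eight explicit cycles, deduces step by step that $Q$ reduces to thirteen, then six, then two monomials, and finally kills the last two coefficients in step (V) by plugging in two explicit points of $M_-$ that do not come from Hamiltonian-cycle subspaces. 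Your version replaces that bookkeeping with a single $S_5$-transitivity argument ($S_5$ preserves $F$ and hence $M_-$, and acts transitively on the twelve Hamiltonian cycles) plus the clean combinatorial fact that any two edges of $K_5$ lie in a common Hamiltonian cycle, so that all twelve cycles together cover every entry $a_{ij}$ of $Q$; in particular, the paper's step (V) becomes unnecessary. The computational ``miracle'' you flag is indeed the crux and checks out: on $V_{C_0}$ only the diagonal squares $x_5^2x_{10}^2$, $x_4^2x_8^2$, $x_1^2x_{10}^2$, $x_4^2x_5^2$, $x_1^2x_8^2$ survive in $|(X\wedge X)^*|^2$, and these are exactly the five distance-$2$ products along the cycle; the relabeling $z_i=y_{2i\bmod 5}$ (using that $2$ is a unit mod $5$, i.e.\ the pentagram graph is isomorphic to the pentagon) converts them into the adjacent products of Choi--Lam's $H$. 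The only small inaccuracy is cosmetic: you write that $2$ ``generates $(\mathbb{Z}/5)^\times$''; all that is needed is that $2$ is invertible mod $5$, so the map $i\mapsto 2i$ is a permutation. The final non-\emph{sos} deduction is exactly as in the paper.
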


\begin{proof} For $x=(x_1,\cdots,x_{10})$,
let $G(x)=G_F^+(x)/2=(|x|^4+F(x))/2$, namely,
$$\begin{aligned}
G(x)&=|x|^4-4\Big\{ (x_5x_{10}-x_6x_9+x_7x_8)^2   +   (-x_2x_{10}+x_3x_9-x_4x_8)^2    \\
&\quad +(x_1x_{10}-x_3x_7+x_4x_6)^2  +   (-x_1x_9+x_2x_7-x_4x_5)^2  + (x_1x_8-x_2x_6+x_3x_5)^2 \Big\}.
 \end{aligned}$$
It is clear that $M_-=\{x\in \mathbb{S}^9 \mid G(x)=0\}$. We prove in the following steps. For convenience, denote by $\mathbb{R}^5_{ijklm}$ the $5$-space with coordinates $x_i$, $x_j$, $x_k$, $x_l$, $x_m$.

(I). Restricting $x\in \mathbb{R}^{10}$ to $\mathbb{R}^5_{23679}$,
one gets
$$\begin{aligned}
&\quad G(0,x_2,x_3,0,0,x_6,x_7,0,x_9,0)\\
&= (x_2^2+x_6^2+x_9^2+x_3^2+x_7^2)^2 -4 (x_2^2x_6^2+x_6^2x_9^2+x_9^2x_3^2+x_3^2x_7^2+x_7^2x_2^2)\\
&=H(x_2,x_6,x_9,x_3,x_7).
 \end{aligned}$$
On the other hand, restricting $x\in \mathbb{R}^{10}$ to $\mathbb{R}^5_{1458,10}$,
one gets
$$\begin{aligned}
&\quad G(x_1,0,0,x_4,x_5,0,0,x_8,0,x_{10})\\
&= (x_1^2+x_{10}^2+x_5^2+x_4^2+x_8^2)^2 -4 (x_1^2x_{10}^2+x_{10}^2x_5^2+x_5^2x_4^2+x_4^2x_8^2+x_8^2x_1^2)\\
&=H(x_1,x_{10},x_5,x_4,x_8).
 \end{aligned}$$
Suppose now that a quadratic form $Q=Q(x_1, \cdots,x_{10})$ vanishes on $M_-$.
Applying Lemma \ref{lemg422}, we see that $Q=Q(x)$ is a bilinear form on $\mathbb{R}^5_{23679}\times \mathbb{R}^5_{1458,10}$.

(II). Let us restrict $x\in \mathbb{R}^{10}$ to $\mathbb{R}^5_{2456,10}$. Then
$$\begin{aligned}
&\quad G(0,x_2,0,x_4,x_5,x_6,0,0,0,x_{10})\\
&= (x_2^2+x_6^2+x_4^2+x_5^2+x_{10}^2)^2 -4 (x_2^2x_6^2+x_6^2x_4^2+x_4^2x_5^2+x_5^2x_{10}^2+x_{10}^2x_2^2)\\
&=H(x_2,x_6,x_4,x_5,x_{10}).
 \end{aligned}$$
On the other hand,  restricting $x\in \mathbb{R}^{10}$ to $\mathbb{R}^5_{13789}$,
one gets
$$\begin{aligned}
&\quad G(x_1,0,x_3,0,0,0,x_7,x_8,x_9,0)\\
&= (x_1^2+x_8^2+x_7^2+x_3^2+x_9^2)^2 -4 (x_1^2x_8^2+x_8^2x_7^2+x_7^2x_3^2+x_3^2x_9^2+x_9^2x_1^2)\\
&=H(x_1,x_8,x_7,x_3,x_9).
 \end{aligned}$$

Applying Lemma \ref{lemg422}, and summarizing the arguments above, we can write $Q$ as
$$\begin{aligned}
&\quad Q(x_1,\cdots,x_{10}) \\
& =a_1x_1x_2+ a_2x_1x_6 +  a_3x_2x_8  + a_4 x_3x_4  +  a_5 x_3x_5   + a_6x_3x_{10} + a_7x_4x_7 \\
&\quad   + a_8x_4x_9  +a_9 x_5x_7 +a_{10} x_5x_9  + a_{11}x_6x_8 + a_{12}x_7x_{10} + a_{13}x_9x_{10},
 \end{aligned}$$
with real numbers $a_1,\cdots, a_{13}$.

(III). Let us restrict $x\in \mathbb{R}^{10}$ to $\mathbb{R}^5_{34569}$. Then
$$\begin{aligned}
&\quad G(0,0,x_3,x_4,x_5,x_6,0,0,x_9,0)\\
&= (x_3^2+x_5^2+x_4^2+x_6^2+x_9^2)^2 -4 (x_3^2x_5^2+x_5^2x_4^2+x_4^2x_6^2+x_6^2x_{9}^2+x_{9}^2x_3^2)\\
&=H(x_3,x_5,x_4,x_6,x_9).
 \end{aligned}$$
On the other hand,  restricting $x\in \mathbb{R}^{10}$ to $\mathbb{R}^5_{1278,10}$,
one gets
$$\begin{aligned}
&\quad G(x_1,x_2,0,0,0,0,x_7,x_8,0,x_{10})\\
&= (x_1^2+x_8^2+x_7^2+x_2^2+x_{10}^2)^2 -4 (x_1^2x_8^2+x_8^2x_7^2+x_7^2x_2^2+x_2^2x_{10}^2+x_{10}^2x_1^2)\\
&=H(x_1,x_8,x_7,x_2,x_{10}). \end{aligned}$$

Applying Lemma \ref{lemg422}, and summarizing the arguments above, we deduce
$$\begin{aligned}
&\quad Q(x_1,\cdots,x_{10})\\
&=a_2x_1x_6 + a_6x_3x_{10} + a_7x_4x_7 +a_9 x_5x_7  + a_{11}x_6x_8 + a_{13}x_9x_{10}. \end{aligned}$$

(IV). Let us restrict $x\in \mathbb{R}^{10}$ to $\mathbb{R}^5_{24678}$. Then
$$\begin{aligned}
&\quad G(0,x_2,0,x_4,0,x_6,x_7,x_8,0,0)\\
&= (x_2^2+x_4^2+x_6^2+x_8^2+x_7^2)^2 -4 (x_2^2x_6^2+x_6^2x_4^2+x_4^2x_8^2+x_8^2x_7^2+x_7^2x_2^2)\\
&=H(x_2,x_6,x_4,x_8,x_7).\end{aligned}$$
On the other hand,  restricting $x\in \mathbb{R}^{10}$ to $\mathbb{R}^5_{1359,10}$,
one gets
$$\begin{aligned}
&\quad G(x_1,0,x_3,0,x_5,0,0,0,x_9,x_{10})\\
&= (x_1^2+x_9^2+x_3^2+x_5^2+x_{10}^2)^2 -4 (x_1^2x_9^2+x_9^2x_3^2+x_3^2x_5^2+x_5^2x_{10}^2+x_{10}^2x_1^2)\\
&=H(x_1,x_9,x_3,x_5,x_{10}).\end{aligned}$$

 Applying Lemma \ref{lemg422}, and summarizing the arguments above, we deduce
$$ Q(x_1,\cdots,x_{10})=  a_2x_1x_6 +a_9 x_5x_7 .$$

(V). We observe that $M_-$ contains the set
 $$\{(0,0,x_3,0,x_5,0,x_7,0,0,0)\mid x_3^2=x_5^2+x_7^2=\frac{1}{2}\},$$
 and thus the assumption that $Q(x)$ vanishes on $M_-$ implies $Q(x)=a_2x_1x_6$.
At last, we observe that $M_-$ contains the set
 $$\{(x_1,0,0,0,0,x_6,0,0,x_9,0)\mid x_9^2=x_1^2+x_6^2=\frac{1}{2}\},$$
 and thus the assumption that $Q(x)$ vanishes on $M_-$ implies $Q(x)\equiv 0$.
 \end{proof}

Now we turn to the isoparametric polynomial with $g=4$, $(m_+,m_-)=(4,5)$. In fact, we consider the equivalent version (see the footnote \ref{footnote1}) with $(m_+,m_-)=(5,4)$ for the sake of consistency. According to Solomon \cite{So92},
it comes from the map
$$ |(Z\wedge Z)^*|^2\; \;\textit{for} \;Z\in \Lambda^2(\mathbb{C}^5)\cong \mathbb{C}^{10}\cong \mathbb{R}^{20}. $$
Choose an oriented orthonormal basis $\{e_1,\cdots,e_5\}$ in $\mathbb{C}^5$.
Represent
$$\begin{aligned}
Z&=z_1e_1\wedge e_2 + z_2e_1\wedge e_3 + z_3e_1\wedge e_4 + z_4e_1\wedge e_5 +z_5e_2\wedge e_3\\
&\quad +z_6e_2\wedge e_4 + z_7e_2\wedge e_5 + z_8e_3\wedge e_4 + z_9e_3\wedge e_5 +z_{10}e_4\wedge e_5.\end{aligned}$$

It is clear that
the isoparametric polynomial $F'$ defined by

$$F'(Z)=|Z|^4-2 |(Z\wedge Z)^*|^2$$
is equal to
\begin{equation}\label{g445}
\begin{aligned}
 F'(z_1,\cdots, z_{10})&=\Big(|z_1|^2+\cdots+ |z_{10}|^2\Big)^2-8\Big\{ |(z_5z_{10}-z_6z_9+z_7z_8)|^2\\
 &\quad   +   |(-z_2z_{10}+z_3z_9-z_4z_8)|^2    +|(z_1z_{10}-z_3z_7+z_4z_6)|^2 \\
 &\quad   +   |(-z_1z_9+z_2z_7-z_4z_5)|^2  + |(z_1z_8-z_2z_6+z_3z_5)|^2 \Big\},
\end{aligned}
\end{equation}
with $z_j=x_j+\sqrt{-1}y_j$, $j=1,\cdots,10$.
It is clear that the focal submanifold $M_+^{13}=(F')^{-1}(+1)\cap\mathbb{S}^{19}$ is quadratic  and $G_{F'}^-$ of (\ref{psdform}) is \emph{sos}. As before, we define $G'$ by
 $G'(z)=G_{F'}^+/2=(F'(z)+|z|^4)/2$, namely,
$$ G'(z_1,\cdots, z_{10})=\Big(|z_1|^2+\cdots+ |z_{10}|^2\Big)^2-4\Big\{ |(z_5z_{10}-z_6z_9+z_7z_8)|^2   +   |(-z_2z_{10}+z_3z_9-z_4z_8)|^2    $$
$$ +|(z_1z_{10}-z_3z_7+z_4z_6)|^2  +   |(-z_1z_9+z_2z_7-z_4z_5)|^2  + |(z_1z_8-z_2z_6+z_3z_5)|^2 \Big\}. $$
It is clear that $G'(z)\geq 0$, and $M^{14}_-=\{z\in S^{19}\mid G'(z)=0\}.$
   Taking $y_1=\cdots=y_{10}=0$, this isoparametric polynomial $F'$ with multiplicities $(5,4)$ becomes $F$ in (\ref{g422}), replacing $z_j$ by $x_j$.
   As a consequence, we get from Theorem \ref{thmg422}:
\begin{cor}\label{cor}
The \emph{psd} form $G_{F'}^+=2G'$ of $(\ref{psdform})$ on $\mathbb{R}^{20}$ is \emph{non-sos}.
\end{cor}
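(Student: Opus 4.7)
The plan is to derive the non-\emph{sos} property of $G_{F'}^+$ on $\mathbb{R}^{20}$ directly from Theorem \ref{thmg422} by a restriction argument. The key elementary fact I will use is that the class of \emph{sos} polynomials is closed under arbitrary real linear substitutions of the variables: if $p(x_1,\dots,x_N)=\sum_j h_j(x)^2$ and $L:\mathbb{R}^M\to\mathbb{R}^N$ is linear, then $p\circ L=\sum_j(h_j\circ L)^2$ is again \emph{sos}. In particular, the restriction of an \emph{sos} form to any linear subspace of its domain is \emph{sos}.

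Now I would exploit the observation already made in the excerpt: substituting $y_1=\cdots=y_{10}=0$ into the expression (\ref{g445}) identifies $z_j$ with the real number $x_j$, so that $|z_j|^2=x_j^2$ and each term $|z_5z_{10}-z_6z_9+z_7z_8|^2$ becomes the corresponding real square $(x_5x_{10}-x_6x_9+x_7x_8)^2$, and similarly for the other four summands. Thus $F'(x_1,0,\dots,x_{10},0)$ coincides exactly with the isoparametric polynomial $F(x_1,\dots,x_{10})$ of (\ref{g422}). Consequently
\begin{equation*}
G_{F'}^+(x_1,0,x_2,0,\dots,x_{10},0)=|x|^4+F(x)=G_F^+(x_1,\dots,x_{10}).
\end{equation*}

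Suppose, for contradiction, that $G_{F'}^+$ is \emph{sos} on $\mathbb{R}^{20}$. By the closure property above, its restriction to the $10$-dimensional subspace $\{y_1=\cdots=y_{10}=0\}$, namely $G_F^+$, would also be \emph{sos} on $\mathbb{R}^{10}$. This contradicts Theorem \ref{thmg422}, which asserts that $G_F^+$ is not \emph{sos}. Hence $G_{F'}^+=2G'$ is not \emph{sos}, proving the corollary.

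There is essentially no obstacle here; the entire corollary is a one-line consequence of Theorem \ref{thmg422} once one notes the substitution. The only thing worth writing out carefully is the verification that the substitution $z_j\mapsto x_j$ (or equally $z_j\mapsto\sqrt{-1}\,y_j$, which yields the same reduction up to the sign inside the squared moduli) sends the defining formula (\ref{g445}) of $F'$ to the defining formula (\ref{g422}) of $F$ term by term. This confirms the restriction identity and closes the argument.
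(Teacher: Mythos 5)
Your proposal is correct and is exactly the argument the paper uses: the paper notes immediately before the corollary that setting $y_1=\cdots=y_{10}=0$ (or $x_1=\cdots=x_{10}=0$) reduces the multiplicity-$(5,4)$ polynomial $F'$ of $(\ref{g445})$ to the multiplicity-$(2,2)$ polynomial $F$ of $(\ref{g422})$, and then invokes Theorem \ref{thmg422} together with the stability of \emph{sos} under linear restriction. Your write-up just spells out the closure-under-linear-substitution step more explicitly, which the paper leaves implicit.
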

Furthermore, we can show
\begin{thm}\label{thm}
Any quadratic form $Q$ on $\mathbb{R}^{20}$ vanishing on $M^{14}_-$ is identically zero. In particular $M^{14}_-$ is not quadratic.
\end{thm}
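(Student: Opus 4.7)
The plan is to combine the reduction noted just before the statement---that setting $y_1=\cdots=y_{10}=0$ (or $x_1=\cdots=x_{10}=0$) reduces $F'$ to the $(2,2)$-case polynomial $F$ of (\ref{g422})---with the full $U(5)$-invariance of $F'$ on $\Lambda^2(\mathbb{C}^5)=\mathbb{C}^{10}$. First I would apply this reduction: both restrictions $Q|_{y=0}$ and $Q|_{x=0}$ are quadratic forms on $\mathbb{R}^{10}$ vanishing on the $(2,2)$-case focal submanifold $M_-\subset\mathbb{S}^9$, so Theorem \ref{thmg422} forces each of them to vanish identically. Consequently $Q$ contains no pure $x_ix_j$ or $y_iy_j$ terms and must take the bilinear form
\[
Q(x,y)=\sum_{i,j=1}^{10} a_{ij}\,x_i y_j,\qquad A=(a_{ij})\ \text{a real $10\times 10$ matrix}.
\]

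Next I would show that $A$ is antisymmetric. For every $w\in M_-$ and every $\phi\in\mathbb{R}$, each bilinear expression $B_k(z)$ appearing in $G'$ scales by $e^{2i\phi}$ under $z\mapsto e^{i\phi}z$, so $|B_k(e^{i\phi}w)|^2=B_k(w)^2$ and hence $G'(e^{i\phi}w)=G(w)=0$. In real coordinates, $e^{i\phi}w$ corresponds to $(\cos\phi\cdot w,\sin\phi\cdot w)\in\mathbb{R}^{20}$, so
\[
0=Q(e^{i\phi}w)=\tfrac12\sin(2\phi)\,R(w),\qquad R(w):=\sum_{i,j}a_{ij}w_iw_j.
\]
Thus the quadratic form $R$ on $\mathbb{R}^{10}$ vanishes on $M_-$, and a second application of Theorem \ref{thmg422} yields $R\equiv 0$; this is equivalent to $a_{ij}+a_{ji}=0$ for all $i,j$.

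Finally, a short direct calculation shows that with $A$ real antisymmetric the form can be rewritten as a Hermitian form $Q(z)=z^{*}Hz$, where $H=-\tfrac{i}{2}A$ is Hermitian with vanishing diagonal. Since $F'(Z)=|Z|^4-2|(Z\wedge Z)^{*}|^2$ depends only on the $U(5)$-invariants $|Z|^2$ and $|Z\wedge Z|^2$ of $\Lambda^2(\mathbb{C}^5)$, the whole $U(5)$ preserves $M^{14}_-$; in particular, the diagonal torus $T^5\subset U(5)$ acts as $z_{IJ}\mapsto e^{i(\theta_I+\theta_J)}z_{IJ}$. The invariance $Q(Dz)=Q(z)$ then forces $H_{jk}\bigl(e^{i(\theta_K+\theta_L-\theta_I-\theta_J)}-1\bigr)=0$ whenever the coordinates are indexed $j\leftrightarrow\{I,J\}$ and $k\leftrightarrow\{K,L\}$. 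Since any two distinct $2$-subsets of $\{1,\dots,5\}$ can be separated by some choice of $\theta_1,\dots,\theta_5$, every off-diagonal entry of $H$ must vanish; combined with the zero diagonal, $H\equiv 0$, hence $A=0$ and $Q\equiv 0$. The main obstacle is conceptual rather than technical: the key is to recognize that the complex structure of the $(5,4)$-case promotes $Q$ to a Hermitian form on $\mathbb{C}^{10}$, and that the $U(5)$-symmetry on $\Lambda^2(\mathbb{C}^5)$ supplies exactly the torus action needed to diagonalize $H$; without this symmetry input one would be forced to imitate the laborious coordinate-by-coordinate restriction argument of Theorem \ref{thmg422} in $20$ real variables.
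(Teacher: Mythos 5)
Your first two steps match the paper exactly: restricting to $y=0$ or $x=0$ and invoking Theorem~\ref{thmg422} to reduce $Q$ to a bilinear form, and then using the $U(1)$-action $z\mapsto e^{i\phi}z$ (or, equivalently, setting $x_i=y_i$, which is the paper's choice $\phi=\pi/4$) to force the coefficient matrix $A$ to be real antisymmetric. Your third step replaces the paper's laborious exhibition of explicit zeros with the $T^5\subset U(5)$ torus action, which is a cleaner and more conceptual route. However, as written it contains a genuine gap.

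The problem is the assertion that ``the invariance $Q(Dz)=Q(z)$ then forces'' the vanishing of $H_{jk}$. Nothing gives you $Q\circ D = Q$: the form $Q$ is \emph{not} assumed $U(5)$-invariant, and in general a quadratic form vanishing on a $U(5)$-invariant set need not itself be $U(5)$-invariant (indeed, you are in the middle of proving $Q=0$, so you cannot presuppose a strong symmetry of $Q$). What $U(5)$-invariance of $F'$ (hence of $M^{14}_-$) actually buys you is only that $Q\circ D$ is \emph{another} quadratic form vanishing on $M^{14}_-$.

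The argument can be repaired along the same lines. Apply the already-established constraints to $Q\circ D$: since it vanishes on $M^{14}_-$, its restriction to $y=0$ is a quadratic form on $\mathbb{R}^{10}$ vanishing on the $(2,2)$-case $M_-$, so by Theorem~\ref{thmg422} it vanishes identically. In Hermitian notation this says the real symmetric part of $D^*HD$ is zero. Since $H_{jk}=-\tfrac{i}{2}a_{jk}$ is purely imaginary and $(D^*HD)_{jk}=e^{i(\phi_k-\phi_j)}H_{jk}$ with $\phi_j=\theta_{I_j}+\theta_{J_j}$, the real part of $(D^*HD)_{jk}$ equals $\tfrac{1}{2}a_{jk}\sin(\phi_k-\phi_j)$. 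For any $j\ne k$ (i.e.\ distinct $2$-subsets of $\{1,\dots,5\}$) one can choose $\theta_1,\dots,\theta_5$ making $\phi_k-\phi_j\not\equiv 0\ (\mathrm{mod}\ \pi)$, which forces $a_{jk}=0$. Together with $a_{jj}=0$ from antisymmetry, $A=0$ and $Q\equiv 0$. With this correction your proof is valid and is a genuinely different (and arguably more illuminating) way to complete the final step than the paper's case-by-case selection of zero points.
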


\begin{proof} Suppose that a quadratic form $Q$ vanishes on $M_-$.
At first, let us take $y_1=\cdots=y_{10}=0$, or take $x_1=\cdots=x_{10}=0$.
Applying Theorem \ref{thmg422}, we see that $Q$ is a bilinear form on $\{x_1,\cdots, x_{10}\}$ and $\{y_1,\cdots,y_{10}\}$.
Namely, $Q=\sum a_{ij}x_iy_j$, with $a_{ij}\in \mathbb{R}$ and $i,j=1,\cdots,10$.

Next, let us take $x_i=y_i$ for $i=1,\cdots,10$. Then,
   $$ \frac{1}{4}G'(x_1,\cdots,x_{10},x_1,\cdots, x_{10}) =G(x_1,\cdots, x_{10}).              $$
By the assumption, $\sum a_{ij}x_ix_j$ vanishes on the spherical zeroes of $G$. Applying Theorem \ref{thmg422} again, we
see that $a_{ij}=-a_{ji}$ , for $i,j=1,\cdots,10$.

Now for $i<j$, considering the value of $Q$ at the zero point $z$ of $G'$ with
$$z_i=x_i=\frac{1}{\sqrt{2}},\;z_j=\sqrt{-1}y_j=\frac{\sqrt{-1}}{\sqrt{2}}$$
 and all other $z_k=0$ where $z_iz_j$ appears in  $|(Z\wedge Z)^*|^2$ of $G'$, we see that $a_{ij}=0$ for $(i,j)=(5,10)$, $(6,9)$, $(7,8)$, $(2,10)$, $(3,9)$, $(4,8)$, $(1,10)$, $(3,7)$, $(4,6)$, $(1,9)$, $(2,7)$, $(4,5)$, $(1,8)$, $(2,6)$, or $(3,5)$.

 For any other pair $(i,j)$ with $i<j$, we can also show that $a_{ij}=0$. For simplicity, without loss of generality we take $(i,j)=(1,2)$ for example. Since there are items $z_1z_{10}$ and $z_2z_{10}$ in $|(Z\wedge Z)^*|^2$, we have one zero point $Z$ of $G'$ with $z_1=x_1=\frac{1}{2}$, $z_2=\sqrt{-1}y_2=\frac{\sqrt{-1}}{2}$, $z_{10}=\sqrt{-1}y_{10}=\frac{\sqrt{-1}}{\sqrt{2}}$ and all other $z_k=0$. Thus
 $$ 0=Q(Z)=\frac{1}{4}a_{12}+\frac{1}{2\sqrt{2}}a_{1,10}=\frac{1}{4}a_{12},$$
as we have shown $a_{1,10}=0$.
 \end{proof}

\section{On isoparametric of OT-FKM type with $g=4$}\label{secg4OTFKM}
In this section, we classify the classes of isoparametric polynomials of hypersurfaces of OT-FKM type with $g=4$ such that $G_F^+=|x|^g+ F(x)$ of (\ref{psdform}) is \emph{sos}.

Recall that an OT-FKM type isoparametric polynomial is defined as (cf. \cite{OT75, FKM81})
\begin{equation}\label{FKM isop. poly.}
F(x) = |x|^4 - 2\displaystyle\sum_{\alpha = 0}^{m}{\langle
P_{\alpha}x,x\rangle^2},  \quad x\in\mathbb{R}^{2l},
\end{equation}
where $\{P_0,\cdots,P_m\}$ is  a symmetric Clifford system on $\mathbb{R}^{2l}$, i.e.,
$P_{\alpha}$'s are symmetric matrices satisfying $P_{\alpha}P_{\beta}+P_{\beta}P_{\alpha}=2\delta_{\alpha\beta}I_{2l}$.
Then the multiplicity pair is $(m_+, m_-)=(m, l-m-1)$. Two Clifford systems $\{P_0,\cdots,P_m\}$ and $\{Q_0,\cdots,Q_m\}$ on $\mathbb{R}^{2l}$ are called
algebraically equivalent if there exists $A\in O(\mathbb{R}^{2l})$ such that $Q_\alpha = AP_\alpha A^t$
for all $\alpha \in\{0, \cdots, m\}$. They are called geometrically equivalent when there exists $B\in O(\mathrm{Span}\{P_0, \cdots,P_m\})$ such that $\{Q_0,\cdots,Q_m\}$ and $\{B(P_0),\cdots,B(P_m)\}$ are algebraically equivalent, which give two isoparametric polynomials that are congruent under an orthogonal transformation of $\mathbb{R}^{2l}$. We will apply representation theory of Clifford algebra to prove Theorem \ref{SOS-FKM} below.

As introduced in Section \ref{secintro}, we can define the \emph{psd} forms $G_F^{\pm}\in P_{2l,4}$ as (\ref{psdform}).
Clearly, $G_F^-=|x|^g-F(x)$ is \emph{sos} and the focal submanifold $M_+=F^{-1}(1)\cap \mathbb{S}^{2l-1}$ is quadratic and defined as
\begin{equation}\label{def-M+}
M_+^{m_++2m_-}=\{x\in\mathbb{S}^{2l-1}\mid \langle P_\alpha x,x\rangle=0, \alpha=0,\cdots,m\}.
\end{equation}
From now on, we write $G_F=G_F^+/2$ for simplicity. Then
\begin{equation}\label{nonnegativepolyG}
G_F(x)=(F(x)+|x|^4)/2=|x|^4-\sum_{\alpha=0}^m\langle P_\alpha x,x\rangle^2.
\end{equation}

We are concerned with whether the other focal submanifold $M_-=F^{-1}(-1)\cap \mathbb{S}^{2l-1}$ is quadratic or not. Note that $M_-$ is just the set of spherical zeroes of $G_F$ and can be expressed as
\begin{equation}\label{def-M-}
M_-^{2m_++m_-}=G_F^{-1}(0)\cap\mathbb{S}^{2l-1}=\{x\in\mathbb{S}^{2l-1}\mid Px=x, \textit{ for some }P\in\Sigma\},
\end{equation}
where $\Sigma=\{P\in \mathrm{Span}\{P_0,\cdots,P_m\}\mid |P|^2=\mathrm{tr}(PP^t)=2l\}$ is the Clifford sphere (see \cite{FKM81}). Therefore, if the \emph{psd} form $G_F$ is a sum of squares of quadratic forms, then $M_-$ is obviously quadratic. It turns out that for almost all cases $G_F$ is \emph{non-sos}.
\begin{thm}\label{SOS-FKM}
The \emph{psd} form $G_F$ in $(\ref{nonnegativepolyG})$ on $\mathbb{R}^{2l}$, associated with the OT-FKM type, is \emph{sos} if and only if $m=1,~2$, or $(m_+, m_-)=(m, l-m-1)=(5,2),~(6,1), ~(3,4)$ or $(4,3)$ of the indefinite class.
\end{thm}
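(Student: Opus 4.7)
The theorem has two directions: sufficiency (exhibiting an sos decomposition in each listed case) and necessity (ruling one out for every ``other'' OT-FKM class). The common thread is that if $G_F=\sum_j h_j^2$ with each $h_j$ of degree two, then each $h_j$ vanishes on the real cone $\mathbb{R}M_-$ over the focal submanifold $M_-=G_F^{-1}(0)\cap\sphere^{2l-1}=\{x\in\sphere^{2l-1}:Px=x\text{ for some }P\in\Sigma\}$, so everything is controlled by the space $\mathcal{I}_2(M_-)$ of quadratic forms vanishing on $M_-$.

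For sufficiency I would treat the three groups of sos cases in turn. For $m=1$, decompose $\mathbb{R}^{2l}=E_+\oplus E_-$ into the $\pm 1$-eigenspaces of $P_0$; since $P_1$ is an involutive isometry swapping $E_\pm$, writing $x=u+v$ with $u\in E_+$, $v\in E_-$ gives $\langle P_0x,x\rangle=|u|^2-|v|^2$ and $\langle P_1x,x\rangle=2\langle P_1u,v\rangle$, whence
\begin{equation*}
G_F(x)=4\bigl(|P_1u|^2|v|^2-\langle P_1u,v\rangle^2\bigr),
\end{equation*}
manifestly a sum of squares by Lagrange's identity in $E_-\cong\mathbb{R}^l$. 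For $m=2$ I would use the complex structure $J:=P_0P_1$ to identify $\mathbb{R}^{2l}\cong\mathbb{C}^l$ and reduce $G_F$ to a Hermitian Lagrange identity on $\mathbb{C}^l$. For the four exceptional cases $(5,2),(6,1),(3,4),(4,3)^I$ on $\mathbb{R}^{16}$, I plan to use the octonionic realisation of the relevant Clifford modules, presenting each $P_\alpha$ via left or right octonionic multiplication on $\mathbb{O}^2\cong\mathbb{R}^{16}$; then $\sum_\alpha\langle P_\alpha x,x\rangle^2$ admits a closed octonionic expression, and $G_F$ becomes a Gram-determinant-type sum of squares. The distinction of $(4,3)^I$ versus $(4,3)^D$ will be traced to the trace condition on $P_0P_1P_2P_3P_4$ that separates the two classes.

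For the necessity direction, in each ``other'' case I plan to show $\mathcal{I}_2(M_-)=\{0\}$, immediately ruling out any sos expression. Following the strategy of Theorem \ref{thmg422}, for every $P\in\Sigma$ the $(+1)$-eigenspace $V_P^+$ is an $l$-plane whose unit sphere lies in $M_-$, so any $h\in\mathcal{I}_2(M_-)$ restricts to a quadratic form on $V_P^+$ vanishing on $\sphere^{l-1}$ and therefore vanishing identically on $V_P^+$. As $P$ varies over the Clifford sphere $\Sigma\cong\sphere^m$ the $l$-planes $V_P^+$ sweep out a family whose spans, together with the compatibility conditions arising from intersections $V_P^+\cap V_Q^+$ for nearby $P,Q\in\Sigma$, should force the symmetric bilinear form associated with $h$ to be identically zero. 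As a fallback in recalcitrant subcases I would try to locate a linear subspace $V\subset\mathbb{R}^{2l}$ on which $G_F|_V$ coincides with a known non-sos form, such as the Choi--Lam Horn form $H$ or the $(2,2)$-polynomial in (\ref{g422}), since sos passes to subspaces. The main obstacle is making either argument uniform across all residues $m\bmod 8$, which control the Clifford-module dimension $\delta(m)$, and in particular distinguishing the definite class $(4,3)^D$ (which must be shown non-sos) from the indefinite sibling $(4,3)^I$ (which must be shown sos): this fine dichotomy at $m=4$ will likely drive the precise form of the general argument.
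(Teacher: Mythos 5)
Your sufficiency sketch is on the right track and broadly matches the paper. For $m=1,2$ the Lagrange identity in $E_-\cong\mathbb{R}^l$ (resp.\ its Hermitian form on $\mathbb{C}^{l/2}$) is exactly what the paper uses; your octonionic plan for the four exceptional cases is less precise than the paper's one-line trick: there is a $9$-member Clifford system $(P_0,\ldots,P_8)$ on $\mathbb{R}^{16}$ satisfying $\sum_{\alpha=0}^8\langle P_\alpha x,x\rangle^2=|x|^4$, and the corresponding $G_F$ is simply the tail $\sum_{\alpha=m+1}^8\langle P_\alpha x,x\rangle^2$. You would want to nail that identity down rather than leave it at ``Gram-determinant-type.''

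The necessity half of your plan has a genuine gap. You propose to show $\mathcal{I}_2(M_-)=\{0\}$ in every non-\emph{sos} class, arguing that any quadratic form $h$ vanishing on $M_-$ dies on each eigenplane $V_P^+$ and that the compatibility conditions across the Clifford sphere $\Sigma$ force $h=0$. That conclusion is false in most of the cases you need. The only OT-FKM classes where $\mathcal{I}_2(M_-)$ is trivial are $m\equiv0\bmod4$ of the definite class (the paper's Theorem \ref{defcase}); in every other non-\emph{sos} class $M_-$ does carry nontrivial quadratic forms. Concretely, whenever the Clifford system $\{P_0,\ldots,P_m\}$ extends to $\{P_0,\ldots,P_m,P_{m+1}\}$ on $\mathbb{R}^{2l}$ (this happens, e.g., for $(m_+,m_-)=(8,7)$, $(7,8)$, $(3,2k-4)$ with $k>2$, and all indefinite $m\equiv0\bmod4$), the form $x\mapsto\langle P_{m+1}x,x\rangle$ is nonzero and vanishes identically on $M_-$, since $P_{m+1}$ anticommutes with every $P\in\Sigma$ and is skew on each eigenspace $V_P^+$. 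So your vanishing-ideal argument cannot establish non-\emph{sos}-ness; the paper instead assumes $G_F=\sum Q_i^2$, derives structural constraints on the $Q_i$ (each $Q_i$ is off-diagonal with block $B_i$ anticommuting with the Clifford generators), and then extracts a numerical contradiction from Frobenius norms and traces of the $B_i$'s, eventually reducing the whole $m\ge4$ range to the $m=3$, $l>8$ case.

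Your stated ``fallback'' — restricting $G_F$ to a subspace on which it coincides with a known non-\emph{sos} form — is in fact closer to what makes the paper's argument close: for $m\ge5$ (all residues), taking only $P_0,\ldots,P_3$ yields a smaller OT-FKM polynomial $F'$ with $G_{F'}=G_F+\sum_{\alpha\ge4}\langle P_\alpha x,x\rangle^2$, so an \emph{sos} expression for $G_F$ would give one for $G_{F'}$, reducing to $m=3$. That reduction, plus the trace/Frobenius-norm contradictions at $m=3$ (with $l>8$) and in the borderline classes $(8,7)$ and $(7,8)$, is the load-bearing content you would need to develop to make the necessity direction go through.
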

\begin{proof}
We first show the sufficiency.
For $m=1,~2$, Solomon \cite{So92} had proven that $G_F(x)$ in (\ref{nonnegativepolyG}) is a sum of squares of quadratic forms. We repeat the proof for the sake of completeness.
In these cases, $l=km$, $(m_+, m_-)=(1, k-2)$, for any integer $k\geq3$ for $m=1$; or $(m_+, m_-)=(2, 2k-3)$, for any integer $k\geq 2$ for $m=2$.
The coordinate $x\in \mathbb{R}^{2l}$ can be written as $x=(X,Y)\in \mathbb{F}^k\oplus \mathbb{F}^k$ where $\mathbb{F}=\mathbb{R}$ for $m=1$ and $\mathbb{F}=\mathbb{C}$ for $m=2$,
respectively.
Without loss of generality, we can write the Clifford system $\{P_0,\cdots,P_m\}$ in matrix form as (\ref{Cliffordsys-alg}) below where $E_1$ corresponds to the complex structure on $\mathbb{R}^l\cong \mathbb{C}^k$ in the case of $m=2$. Then the isoparametric polynomial $F(x)$ can be written as
$$F(X,Y)=(|X|^2+|Y|^2)^2-2\Big((|X|^2-|Y|^2)^2+4|\langle X,Y\rangle|^2\Big),$$
where $\langle \cdot,\cdot\rangle$ denotes the Hermitian inner product if $m=2$.
Using Lagrange's identity
$$|X|^2|Y|^2=|X\wedge Y|^2+|\langle X,Y\rangle|^2,$$
where $X\wedge Y\in\Lambda^2(\mathbb{F}^k)$ is the exterior product,
we obtain the \emph{sos}-expression of Solomon:
\begin{equation}\label{SOS-FKM1}
G_F(X,Y)=4\Big(|X|^2|Y|^2-|\langle X,Y\rangle|^2\Big)=4|X\wedge Y|^2=4\sum_{1\leq i<j\leq k}|X_iY_j-X_jY_i|^2,
\end{equation}
for any  $X=(X_1,\cdots,X_k),~ Y=(Y_1,\cdots,Y_k)\in \mathbb{F}^k$.

For the cases of $(m_+, m_-)=(5,2),~(6,1), ~(3,4)$ or $(4,3)$ of the indefinite class (for the definition see Subsection \ref{subsec-indefinite} below), the isoparametric foliations are just those OT-FKM type isoparametric foliations with converse multiplicities $(m_-,m_+)$. Correspondingly the \emph{psd} forms $G_F$ can be expressed as a sum of squares of quadratic forms (see \cite{FKM81}). We repeat the proof for the sake of completeness.
The isoparametric polynomials $F(x)$ in these cases can be defined in the following unified way.
Let $\{P_0,\cdots,P_8\}$ be the Clifford system on $\mathbb{R}^{16}$ corresponding to the Clifford algebra $\{E_1,\cdots,E_7\}$ on the Octonions $\mathbb{R}^8$ (see also (\ref{Cliffordbasis}) below).
This Clifford system has the following property
$$\sum_{\alpha=0}^8\langle P_\alpha x,x\rangle^2=|x|^4, \quad x\in \mathbb{R}^{16}.$$
Therefore, taking $m=5,6,3,4$ respectively, the corresponding isoparametric polynomials $F(x)$ can be defined as
$$F(x)=|x|^4-2\sum_{\alpha=0}^m\langle P_\alpha x,x\rangle^2.$$
Thus
\begin{equation}\label{SOS-FKM2}
G_F(x)=|x|^4-\sum_{\alpha=0}^m\langle P_\alpha x,x\rangle^2=\sum_{\alpha={m+1}}^8\langle P_\alpha x,x\rangle^2, \quad x\in \mathbb{R}^{16}.
\end{equation}
We remark that for $m=1,2$, the formula (\ref{SOS-FKM2}) gives another \emph{sos} expression of $G_F$ different from those in (\ref{SOS-FKM1}) (remarked also by  Solomon in \cite{So92}).

To prove the necessity, we only need to show for all other cases that the nonnegative polynomial $G_F(x)$ is not a sum of squares of quadratic forms. We will show this case-by-case in the following subsections.
\end{proof}

\subsection{$m\equiv 0 ~(mod ~4)$, definite case.}
A Clifford system $\{P_0,\cdots,P_m\}$ is called definite if $P_0\cdots P_m=\pm I_{2l}$ (for indefinite case see Subsection \ref{subsec-indefinite} below). In fact, we obtain the following stronger result.
\begin{thm}\label{defcase}
For the case of $m\equiv0 ~(mod ~4)$, when the Clifford system $\{P_0,\cdots,P_m\}$ is definite, any quadratic form $Q$ vanishing on $M_-$ is identically zero. In particular, $M_-$ is not quadratic, and the \emph{psd} form $G_F$ in $(\ref{nonnegativepolyG})$ is \emph{non-sos}.
\end{thm}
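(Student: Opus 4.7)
The plan is to reduce the vanishing of a quadratic form on $M_-$ to a symmetric-matrix identity in the Clifford algebra. Write $Q(x) = \langle Ax, x\rangle$ for a symmetric matrix $A$. Using the description $M_- = \{x\in\mathbb{S}^{2l-1}\mid Px=x \text{ for some }P\in\Sigma\}$ together with the fact that $\Sigma$ is invariant under $P\mapsto -P$, one sees that $M_-$ contains the intersections with $\mathbb{S}^{2l-1}$ of both eigenspaces $E_+^P$ and $E_-^P$ of every $P\in\Sigma$ (note $P^2=I$ since $|P|^2 = 2l$). Because $Q$ is homogeneous of degree $2$, vanishing on the unit sphere of $E_\pm^P$ is equivalent to vanishing identically on $E_\pm^P$.

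The next step is to convert these vanishing conditions into matrix identities. Since $P$ is symmetric with $P^2=I$, the orthogonal projection onto $E_\pm^P$ is $\tfrac{1}{2}(I\pm P)$, so $Q|_{E_\pm^P}\equiv 0$ is equivalent to
\[
(I+P)A(I+P)=0 \quad\text{and}\quad (I-P)A(I-P)=0
\]
as symmetric matrices. Adding these identities gives $A+PAP=0$; multiplying by $P$ on either side and using $P^2=I$ yields $AP+PA=0$. Specializing to $P=P_\alpha$ for each Clifford generator, we conclude that
\[
A P_\alpha + P_\alpha A = 0, \qquad \alpha=0,1,\ldots,m,
\]
i.e., $A$ anticommutes with every member of the Clifford system.

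The final step is to invoke the hypotheses to deduce $A=0$. Iterating the anticommutation $m+1$ times gives
\[
A\,P_0 P_1\cdots P_m = (-1)^{m+1}\, P_0 P_1\cdots P_m\, A.
\]
Since $m\equiv 0\pmod 4$ one has $(-1)^{m+1}=-1$, and the definite hypothesis $P_0P_1\cdots P_m=\pm I_{2l}$ reduces this to $A=-A$, so $A=0$ and hence $Q\equiv 0$. The claim that $M_-$ is not a quadratic variety is then immediate, and the non-\emph{sos} conclusion for $G_F$ follows: any decomposition $G_F=\sum h_j^2$ would consist of quadratic forms $h_j$ vanishing on $G_F^{-1}(0)\cap\mathbb{S}^{2l-1}=M_-$, forcing every $h_j\equiv 0$ and contradicting $G_F\not\equiv 0$.

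The main potential obstacle lies in producing a strong enough constraint on $A$: vanishing of $Q$ on a single eigenspace $E_+^P$ only gives the one-sided identity $(I+P)A(I+P)=0$, which does not by itself force $AP+PA=0$. The crucial input is therefore the symmetry $\Sigma=-\Sigma$, which supplies the companion identity on $E_-^P$ and, upon addition, yields the clean anticommutation; once this is in place, the combinatorial condition $m\equiv 0\pmod 4$ together with definiteness closes the argument with no further Clifford-representation theory needed.
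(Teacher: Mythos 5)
Your proof is correct and follows essentially the same argument as the paper: both extract the anticommutation relations $AP_\alpha + P_\alpha A = 0$ from the vanishing of $Q$ on the eigenspaces of the Clifford system and then use $m\equiv 0\pmod 4$ together with definiteness to force $A=0$. The paper presents this via the block decomposition with respect to $P_0$ (getting an off-diagonal skew-symmetric $B$ anticommuting with the $E_\alpha$'s), while you carry out the identical calculation in a coordinate-free form on the full symmetric matrix $A$ — a cleaner packaging of the same idea.
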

\begin{proof}
Without loss of generality, we can write the Clifford system $\{P_0,\cdots,P_m\}$ in matrix form under the decomposition $\mathbb{R}^{2l}=E_+(P_0)\oplus E_-(P_0)\cong\mathbb{R}^l\oplus \mathbb{R}^l$, where $E_{\pm}(P_0)$ are the eigenspaces of the eigenvalues $\pm1$ of $P_0$, by
\begin{equation}\label{Cliffordsys-alg}
P_0=\begin{pmatrix}
    I_l &  0 \\
     0 & -I_l
\end{pmatrix}, \quad P_1=\begin{pmatrix}
  0 & I_l    \\
 I_l & 0
\end{pmatrix}, \quad P_{\alpha+1}=\begin{pmatrix}
  0 & E_\alpha    \\
 -E_\alpha & 0
\end{pmatrix}, \quad 1\leq\alpha\leq m-1,
\end{equation}
where $\{E_1,\cdots,E_{m-1}\}$ generates a Clifford algebra on $\mathbb{R}^l$, i.e., $E_\alpha$'s are skew-symmetric matrices satisfying $E_\alpha E_\beta+E_\beta E_\alpha=-2\delta_{\alpha\beta}I_l$.

Recall (\ref{def-M-}) that $E_{\pm}(P_0)\cap\mathbb{S}^{2l-1}\subset M_-$. Thus any quadratic form $Q(x)=\sum_{i,j=1}^{2l}q_{ij}x_ix_j$ vanishing on $M_-$ can be expressed in matrix form $Q=(q_{ij})$ by
\begin{equation}\label{quadformQ}
Q=\begin{pmatrix}
  0 & B    \\
 B^t & 0
\end{pmatrix}.
\end{equation} Moreover, since for any $u\in\mathbb{R}^l$, $x=(u,u)\in E_+(P_1)$, and by (\ref{def-M-}), $E_+(P_1)\cap\mathbb{S}^{2l-1}\subset M_-$, we have
$$0=Q(x)=2\langle Bu,u\rangle,$$
which implies that $B$ is skew-symmetric.

Similarly, for each $\alpha=1,\cdots,m-1$, for any $u\in\mathbb{R}^l$, $x=(u,-E_{\alpha}u)\in E_+(P_{\alpha+1})$, and by (\ref{def-M-}), $E_+(P_{\alpha+1})\cap\mathbb{S}^{2l-1}\subset M_-$, the equalities
$$0=Q(x)=2\langle Bu,E_{\alpha}u\rangle$$
imply
\begin{equation}\label{anti-comm}
BE_{\alpha}=-E_{\alpha}B, \quad \alpha=1,\cdots,m-1.
\end{equation}

Now for the case of $m\equiv0 ~(mod ~4)$, when $\{P_0,\cdots,P_m\}$ is definite, i.e.,
$$P_0\cdots P_m=\begin{pmatrix}
    E_1\cdots E_{m-1} &  0 \\
     0 & E_1\cdots E_{m-1}
\end{pmatrix}=\pm I_{2l},$$
 it follows from (\ref{anti-comm}) that
\begin{equation}\label{anti-comm2}
B E_1\cdots E_{m-1}=(-1)^{m-1} E_1\cdots E_{m-1}B,
\end{equation}
which implies that $B=-B$ as $m$ is even and $ E_1\cdots E_{m-1}=\pm I_l$, and thus $B=0$.
\end{proof}

\subsection{$m\equiv 0 ~(mod ~4)$, $(m_+,m_-)\neq(4,3)$, indefinite cases.}\label{subsec-indefinite}
With the same notations as in the last subsection, a Clifford system $\{P_0,\cdots,P_m\}$ is called indefinite if $$P_0\cdots P_m=\begin{pmatrix}
    E_1\cdots E_{m-1} &  0 \\
     0 & E_1\cdots E_{m-1}
\end{pmatrix}\neq\pm I_{2l}.$$

 Recall from \cite{FKM81} that each Clifford system is algebraically equivalent to a direct sum of irreducible Clifford systems. An irreducible
Clifford system $\{P_0,\cdots,P_m\}$ on $\mathbb{R}^{2l}$ exists precisely for the following values of $m$ and $l=\delta(m)$ in Table \ref{table3}:
\begin{table}[h!]
\caption{Dimension $\delta(m)$ of irreducible representation of Clifford algebra}\label{table3}
\centering
\begin{tabular}{|c|c|c|c|c|c|c|c|c|c|}
\hline
$m$ & $1$ & $2$ & $3$ & $4$ & $5$ & $6$ & $7$ & $8$ & $\cdots~ m+8$\\
\hline
$\delta(m)$& $1$ &$2$ &$4$ &$4$ & $8$ & $8$ & $8$ & $8$ & $\cdots~ 16\delta(m)$\\
\hline
\end{tabular}
\end{table}

 For $m\equiv 0 ~(mod ~4)$, there are exactly two algebraic equivalence classes of irreducible Clifford systems $\{P_0,\cdots,P_m\}$ on $\mathbb{R}^{2\delta(m)}$ (resp. Clifford algebras $\{E_1,\cdots,E_{m-1}\}$ on $\mathbb{R}^{\delta(m)}$), distinguished from each other by the choice of sign in
 $P_0\cdots P_m=\pm I_{2\delta(m)}$ (resp. $E_1\cdots E_{m-1}=\pm I_{\delta(m)}$). They are geometrically equivalent because replacing $P_m$ by $-P_m$ swaps them.  If one constructs all direct sums of both of the irreducible algebraic classes with altogether $k$ summands, then the invariant
$|\mathrm{tr}(P_0\cdots P_m)|$ (invariant under geometric equivalence) takes on $1+[\frac{k}{2}]$ different values. Thus there are exactly one definite and $[\frac{k}{2}]$ indefinite geometric equivalence classes of Clifford systems on $\mathbb{R}^{2l}$ with $l=k\delta(m)$. For indefinite case, $k\geq2$ is necessary and the system is reducible.

With these investigations prepared, we are ready to show
\begin{thm}\label{NonSOS-indef}
For the case of $m\equiv0 ~(mod ~4)$, $(m_+,m_-)\neq(4,3)$, when the Clifford system $(P_0,\cdots,P_m)$ is indefinite,  the \emph{psd} form $G_F$ in $(\ref{nonnegativepolyG})$ is \emph{non-sos}.
\end{thm}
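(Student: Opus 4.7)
The plan is to argue by contradiction. Suppose $G_F=\sum_j h_j^2$ for some quadratic forms $h_j$. Since each $h_j$ vanishes on $M_-$ and the derivation in the proof of Theorem~\ref{defcase} uses only the containments $E_{\pm}(P_\alpha)\cap\mathbb{S}^{2l-1}\subset M_-$ (which remain valid in the indefinite setting), every $h_j$ must take the form $h_j(u,v)=2u^tB_jv$ with $B_j\in\mathfrak{so}(l)$ anti-commuting with each $E_\alpha$, $\alpha=1,\ldots,m-1$. A direct computation from \eqref{nonnegativepolyG} and the block description \eqref{Cliffordsys-alg} gives
\[
G_F(u,v)=4\Big(|u|^2|v|^2-(u\cdot v)^2-\sum_{\alpha=1}^{m-1}(u\cdot E_\alpha v)^2\Big),
\]
so the hypothetical sos representation is equivalent to the bilinear norm identity
\[
|u|^2|v|^2=\sum_{\alpha=0}^{m-1}(u^tE_\alpha v)^2+\sum_j(u^tB_jv)^2,\qquad u,v\in\mathbb{R}^l,
\]
where by convention $E_0:=I_l$.

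The next step exploits the volume element $\omega:=E_1\cdots E_{m-1}$. For $m\equiv 0\pmod 4$ one checks that $\omega$ is symmetric with $\omega^2=I_l$, commutes with every $E_\alpha$, and anti-commutes with every $B_j$; hence $\mathbb{R}^l=V_+\oplus V_-$ splits into the $\pm 1$ eigenspaces of $\omega$, each $V_\pm$ is $E_\alpha$-invariant, and every $B_j$ interchanges $V_+$ and $V_-$. Indefiniteness forces $V_+,V_-\neq 0$. Restricting the identity to $u,v\in V_+$ kills all $B_j$ terms, producing the Hurwitz-type identity $|u|^2|v|^2=\sum_{\alpha=0}^{m-1}(u^tE_\alpha v)^2$ on $V_+$. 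Running $v$ through an orthonormal basis of $V_+$ and using the orthogonality of each $E_\alpha|_{V_+}$ gives $\dim V_+=m$, and the classical Hurwitz theorem on normed bilinear maps $\mathbb{R}^n\times\mathbb{R}^n\to\mathbb{R}^n$ then compels $m\in\{1,2,4,8\}$. For $m\equiv 0\pmod 4$ this leaves only $m\in\{4,8\}$; combined with $\dim V_\pm=k_\pm\delta(m)$ and $\delta(4)=4$, $\delta(8)=8$, we obtain $k_+=k_-=1$, i.e.~$k=2$. Thus the Hurwitz step already eliminates every indefinite case except $(m_+,m_-)\in\{(4,3),(8,7)\}$, and $(4,3)$ is excluded by hypothesis.

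To dispose of the remaining case $(m_+,m_-)=(8,7)^I$ I would analyze the cross-block portion of the identity. Writing $B_j=\left(\begin{smallmatrix}0 & -D_j^t\\ D_j & 0\end{smallmatrix}\right)$ with respect to $V_+\oplus V_-$, restriction to $u\in V_+$, $v\in V_-$ gives $|u|^2|v|^2=\sum_j\langle D_ju,v\rangle^2$, while the anti-commutation $B_jE_\alpha=-E_\alpha B_j$ translates into $D_jE_\alpha^+=-E_\alpha^-D_j$. Thus $D_j$ intertwines the action $E_\alpha^+$ on $V_+$ with the negated action $-E_\alpha^-$ on $V_-$. Since $(-1)^{m-1}\omega|_{V_-}=I$, the negated action realizes $V_-$ as an irreducible $\mathrm{Cl}_{0,7}$-module isomorphic to $V_+$, and as $\mathrm{Cl}_{0,7}\cong\mathbb{R}(8)\oplus\mathbb{R}(8)$ Schur's lemma leaves only a one-dimensional real space of intertwiners. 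Therefore $D_j=c_j\phi$ for scalars $c_j\in\mathbb{R}$ and a fixed isomorphism $\phi:V_+\to V_-$, and the right-hand side collapses to $\bigl(\sum_j c_j^2\bigr)\langle\phi(u),v\rangle^2$. Since $\dim V_-=8\geq 2$, one can pick a nonzero $v\in V_-$ orthogonal to $\phi(u)$, making the right-hand side vanish while $|u|^2|v|^2>0$; this contradiction rules out the sos decomposition.

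I expect the Hurwitz dimension count to be the cleanest step and to dispose of nearly all cases in one stroke. The main obstacle will be the borderline case $(8,7)^I$: here both residual Hurwitz identities on $V_\pm$ can genuinely hold (with $V_\pm\cong\mathbb{O}$ as Hurwitz algebras), so one cannot conclude from Hurwitz alone and must instead invoke the representation-theoretic structure of $\mathrm{Cl}_{0,7}$ and its one-dimensional commutant to squeeze the off-diagonal blocks $D_j$ into a single ray, after which the $V_+\times V_-$ identity fails by an orthogonality argument.
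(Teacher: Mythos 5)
Your proof is correct, and it takes a genuinely different, more conceptual route than the paper's. Both begin identically (each $Q_i$ vanishing on $M_-$ must be of the form $2u^tB_iv$ with $B_i$ skew-symmetric and anti-commuting with each $E_\alpha$, giving the bilinear identity $|u|^2|v|^2=\sum_\alpha\langle E_\alpha u,v\rangle^2+\sum_i\langle B_iu,v\rangle^2$), but then diverge. The paper computes $\sum_i|B_i|^2$ twice — once from the bilinear identity ($=l^2-lm$), once using the vanishing of the diagonal blocks $B_i^{rr}$ in the irreducible Clifford decomposition ($=l^2-l\delta(m)$) — to get $m=\delta(m)$, and then separately kills $k\geq 3$ by a third dimension count on the $\pm1$-eigenblocks of $E_1\cdots E_{m-1}$. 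You instead split $\mathbb{R}^l=V_+\oplus V_-$ by the volume element $\omega=E_1\cdots E_{m-1}$ once and for all, and a single Hurwitz-type dimension count on each eigenspace gives $\dim V_\pm=m$, which simultaneously forces $\delta(m)=m$ (so $m\in\{4,8\}$) and $r_0=k-r_0=1$ (so $k=2$) — the paper's two steps collapse into one. For the residual $(m_+,m_-)=(8,7)$ case, the paper manipulates restricted bilinear identities and traces until it lands on the numerical contradiction $\sum_i(\mathrm{tr}\,C_i)^2=-32$; you instead observe that $V_+$ and the sign-twisted $V_-$ are isomorphic irreducible $\mathrm{Cl}_7\cong\mathbb{R}(8)\oplus\mathbb{R}(8)$-modules of real type, so Schur's lemma confines every off-diagonal block $D_j$ to a single ray, and then the normed bilinear identity on $V_+\times V_-$ fails by an easy orthogonality argument. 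I verified the key claims: $\omega$ is symmetric with $\omega^2=I$ for $m\equiv0\bmod 4$, commutes with each $E_\alpha$ and anti-commutes with each $B_i$; indefiniteness gives $V_\pm\neq 0$; the restricted identity on $V_+\times V_+$ does produce $\dim V_+=m$ by the orthonormal-basis trace; and the negated volume element on $V_-$ is $+I$, so Schur applies with a one-dimensional commutant. Your route makes the Clifford-module structure and the role of indefiniteness transparent and avoids the ad hoc numerics, at the cost of invoking representation-theoretic input (Hurwitz, Schur, Bott periodicity of $\mathrm{Cl}_n$); the paper's longer version stays in elementary matrix algebra, which it then reuses for the $m\equiv 3\bmod 4$ subsection. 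One small remark: since $\dim V_\pm=r_\pm\delta(m)=m$ already forces $r_\pm=1$ and $\delta(m)=m$ from the table of $\delta(m)$ alone, the appeal to the Hurwitz classification theorem is not logically necessary, just a clean packaging.
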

\begin{proof}
We prove it by contradiction. Assume there are quadratic forms $Q_1,\cdots,Q_N$ such that
\begin{equation}\label{SOS-ind-assum}
\sum_{i=1}^NQ_i(x)^2=G_F(x)=|x|^4-\sum_{\alpha=0}^m\langle P_\alpha x,x\rangle^2.
\end{equation}
Then each quadratic form $Q_i$ vanishes on $M_-$, which implies that $Q_i$'s are in the same form as in (\ref{quadformQ}) and (\ref{anti-comm}), i.e.,
\begin{equation}\label{SOS-Qiform}
Q_i=\begin{pmatrix}
  0 & B_i    \\
 B_i^t & 0
\end{pmatrix}, \quad B_iE_{\alpha}=-E_{\alpha}B_i, \quad \alpha=1,\cdots,m-1,
\end{equation}
where each $B_i$ is skew-symmetric. Recalling the representation (\ref{Cliffordsys-alg}) of the Clifford system, the equation (\ref{SOS-ind-assum}) is now equivalent to
the identity
\begin{eqnarray}
&&\sum_{i=1}^N\langle B_iu, v\rangle^2=\frac{1}{4}G_F(x) \nonumber \\
&=&\frac{1}{4}\Big(|(u,v)|^4-\langle(u,-v), (u,v)\rangle^2-\langle(v,u), (u,v)\rangle^2-4\sum_{\alpha=1}^{m-1}\langle E_\alpha u, v\rangle^2\Big)\nonumber \\
&=&|u|^2|v|^2-\langle u,v\rangle^2-\sum_{\alpha=1}^{m-1}\langle E_\alpha u,v\rangle^2, \quad\quad\quad\quad\quad x=(u,v)\in\mathbb{R}^l\oplus\mathbb{R}^l.\label{SOS-ind-assum2}
\end{eqnarray}
This identity involves much information. For example, it follows
\begin{equation}\label{SOS-ind-assum3}
\sum_{i=1}^N |B_i|^2=l^2-lm,
\end{equation}
where 
$$\sum_{i=1}^N |B_i|^2=\sum_{p,q=1}^l\sum_{i=1}^N\langle B_ie_p, e_q\rangle^2,$$
for an orthonormal basis $\{e_p\}$ of $\mathbb{R}^l$. This formula holds independent of reducibility.

On the other hand, we consider the decomposition of $\{P_0,\cdots,P_m\}$ on $\mathbb{R}^{2l}$ with $l=k\delta(m)$ into a direct sum of $k\geq2$ irreducible Clifford systems on $\mathbb{R}^{2\delta(m)}$ (denoted with a superscript $r=1,\cdots,k$) so that
\begin{equation}\label{Clifforddecom-irr}
\begin{array}{cccc}
\mathbb{R}^{2l}=&\mathbb{R}^{2\delta(m)} & \oplus  \cdots \oplus & \mathbb{R}^{2\delta(m)}  \\
(P_0,\cdots,P_m)=&(P_0^1,\cdots,P_m^1) & \oplus \cdots  \oplus & (P_0^k,\cdots,P_m^k).
\end{array}
\end{equation}
Here the irreducible Clifford systems $\{P_0^r,\cdots,P_m^r\}$ on $\mathbb{R}^{2\delta(m)}$ can be expressed in the form as (\ref{Cliffordsys-alg}) so that
\begin{equation}\label{Cliffordsys-alg-irr}
P_0^r=\begin{pmatrix}
    I_{\delta(m)} &  0 \\
     0 & -I_{\delta(m)}
\end{pmatrix}, \quad P_1^r=\begin{pmatrix}
  0 & I_{\delta(m)}     \\
 I_{\delta(m)}  & 0
\end{pmatrix}, \quad P_{\alpha+1}^r=\begin{pmatrix}
  0 & E_\alpha^r    \\
 -E_\alpha^r & 0
\end{pmatrix},  \\
~~
\end{equation}
$\alpha=1,\cdots,m-1,$
where $\{E_1^r,\cdots,E_{m-1}^r\}$ generates an irreducible Clifford algebra on each $\mathbb{R}^{\delta(m)}$ of the decomposition of $\{E_1,\cdots,E_{m-1}\}$ on $\mathbb{R}^{l}=\mathbb{R}^{\delta(m)}  \oplus  \cdots \oplus  \mathbb{R}^{\delta(m)}$.

Now for the case of $m\equiv0 ~(mod ~4)$, when $\{P_0,\cdots,P_m\}$ is indefinite ($[\frac{k}{2}]$ classes in total), according to the statements at the beginning of this subsection, without loss of generality,
there is some $[\frac{k+1}{2}]\leq r_0<k$ such that
\begin{equation}\label{Clifforddecom-irr2}
P_0^r\cdots P_m^r=\begin{pmatrix}
    E_1^r\cdots E_{m-1}^r &  0 \\
     0 & E_1^r\cdots E_{m-1}^r
\end{pmatrix}=\left\{
\begin{array}{ll}
I_{2\delta(m)} & r\leq r_0 \\
-I_{2\delta(m)} & r>r_0.
\end{array}
\right.
\end{equation}
Furthermore, we can set
\begin{equation}\label{Clifforddecom-irr-indef}
\begin{array}{ll}
E_\alpha^1=\cdots=E_\alpha^k & \textit{for } \alpha=1,\cdots,m-2,\\
E_{m-1}^1=\cdots=E_{m-1}^{r_0}=-E_{m-1}^{r_0+1}=\cdots=-E_{m-1}^k.
\end{array}
\end{equation}
Therefore, as proved in the last subsection, any quadratic form $Q$ vanishing on $M_-$ is identically zero when restricted to each irreducible component $\mathbb{R}^{2\delta(m)}$ due to (\ref{anti-comm}), (\ref{anti-comm2}) and (\ref{Clifforddecom-irr2}).
Regarding $B_i$ as skew-symmetric operator on $\mathbb{R}^l$, we can rewrite $B_i$ with respect to the irreducible decomposition (\ref{Clifforddecom-irr}) as
\begin{equation}\label{Clifforddecom-irr3}
\begin{array}{cc}
B_i: &\mathbb{R}^{l}=\mathbb{R}^{\delta(m)}  \oplus  \cdots \oplus  \mathbb{R}^{\delta(m)}\rightarrow \mathbb{R}^{\delta(m)}  \oplus  \cdots \oplus  \mathbb{R}^{\delta(m)}=\mathbb{R}^{l} \\
&B_i=\begin{pmatrix}
   B_i^{11}&  \cdots & B_i^{1k} \\
   \vdots  &  \ddots & \vdots \\
   B_i^{k1} & \cdots & B_i^{kk}
\end{pmatrix},
\end{array}
\end{equation}
where each $B_i^{rr}=0$ since it acts as the restriction of $Q_i$ to the $r$-th component $\mathbb{R}^{2\delta(m)}\subset\mathbb{R}^{2l}$, and $(B_i^{rs})^t=-B_i^{sr}:\mathbb{R}^{\delta(m)}\rightarrow \mathbb{R}^{\delta(m)}$ as $B_i$ is skew-symmetric. Let $\{e_a^r\}_{a=1}^{\delta(m)}$ be an orthonormal basis of the $r$-th component $\mathbb{R}^{\delta(m)}\subset\mathbb{R}^{l}$.
Since $E_\alpha=E_\alpha^1\oplus\cdots\oplus E_\alpha^k$ acts orthogonally, it follows from (\ref{SOS-ind-assum2}) that for $r\neq s$,
\begin{equation}\label{Brs}
\sum_{i=1}^N |B_i^{rs}|^2=\sum_{a,b=1}^{\delta(m)}\Big(|e_a^r|^2|e_b^s|^2-\langle e_a^r, e_b^s\rangle^2-\sum_{\alpha=1}^{m-1}\langle E_\alpha e_a^r, e_b^s \rangle^2\Big)=\delta(m)^2,
\end{equation}
and thus
\begin{equation}\label{SOS-ind-assum3'}
\sum_{i=1}^N |B_i|^2=\sum_{r,s=1}^k\sum_{i=1}^N |B_i^{rs}|^2=\sum_{r\neq s}\sum_{i=1}^N |B_i^{rs}|^2=(k^2-k)\delta(m)^2=l^2-l\delta(m).
\end{equation}
Then comparing (\ref{SOS-ind-assum3}) and (\ref{SOS-ind-assum3'}) we obtain $m=\delta(m)$, which implies immediately that $m=4$ or $m=8$.

Let us consider now the case when $k\geq3$. Noting that
$$E_1\cdots E_{m-1}=E_1^1\cdots E_{m-1}^1\oplus\cdots\oplus E_1^k\cdots E_{m-1}^k=\begin{pmatrix}
    I_{r_0\delta(m)} &  0 \\
     0 & -I_{(k-r_0)\delta(m)}
\end{pmatrix},$$
we deduce from (\ref{anti-comm2}) in the same way as before that for each $i$,
\begin{equation}\label{qua00}
B_i^{rs}=0 \quad \textit{for either }~~ 1\leq r,s\leq r_0, ~~ \textit{or } ~~ r_0+1\leq r,s\leq k.
\end{equation}
This contradicts (\ref{Brs}) if $k\geq3$. In fact, similar arguments as in (\ref{SOS-ind-assum3'}) imply
\begin{equation}\label{SOS-ind-assum3''}
\sum_{i=1}^N |B_i|^2=2\sum_{r\leq r_0<s}\sum_{i=1}^N |B_i^{rs}|^2=\Big(k^2-r_0^2-(k-r_0)^2\Big)\delta(m)^2.
\end{equation}
This contradicts  (\ref{SOS-ind-assum3}) and (\ref{SOS-ind-assum3'}), since $\delta(m)\geq m$, $r_0^2\geq r_0$, $(k-r_0)^2\geq k-r_0$, and $r_0^2+(k-r_0)^2>r_0+(k-r_0)=k$ if $k\geq3$.
Hence we are only left with considering the case when $k=2$, namely, the indefinite classes $(m_+,m_-)=(4,3)$ and $(8,7)$.

The indefinite class $(m_+,m_-)=(4,3)$ has been excluded in the assumption. We deduce a contradiction for the last class $(m_+,m_-)=(8,7)$ as follows.
Firstly it follows from (\ref{qua00}) that in this case (\ref{Clifforddecom-irr3}) becomes
\begin{equation}\label{Clifforddecom-irr3''}
\begin{array}{cc}
B_i: &\mathbb{R}^{l}=\mathbb{R}^{8}  \oplus  \mathbb{R}^{8}\rightarrow \mathbb{R}^{8}  \oplus   \mathbb{R}^{8}=\mathbb{R}^{l} \\
&B_i=\begin{pmatrix}
  0 & C_i \\
  -C_i^t & 0
\end{pmatrix},
\end{array}
\end{equation}
where $C_i=B_i^{12}:~  \mathbb{R}^{8}\rightarrow \mathbb{R}^{8}$.
The equation (\ref{SOS-ind-assum2}) becomes
\begin{equation}\label{SOS-ind-assum2'}
\begin{array}{ll}
&\sum\limits_{i=1}^N\Big(\langle C_i v^2, u^1\rangle-\langle C_i u^2, v^1\rangle\Big)^2=(|u^1|^2+|u^2|^2)(|v^1|^2+|v^2|^2)\\
&\quad\quad\quad \quad\quad\quad-\Big(\langle u^1,v^1\rangle+\langle u^2,v^2\rangle\Big)^2 -\sum\limits_{\alpha=1}^7\Big(\langle E_\alpha^1u^1,v^1\rangle+\langle E_\alpha^2u^2,v^2\rangle\Big)^2,
\end{array}
\end{equation}
for any $u=(u^1,u^2), ~v=(v^1,v^2)\in\mathbb{R}^8\oplus \mathbb{R}^8$. Restricting to $u^2=v^2=0$ (or $u^1=v^1=0$), we have
\begin{equation}\label{Cliffordbasis}
\langle u^1,v^1\rangle^2+\sum\limits_{\alpha=1}^7\langle E_\alpha^ru^1,v^1\rangle^2=|u^1|^2|v^1|^2, \quad (u^1,v^1)\in\mathbb{R}^8\oplus \mathbb{R}^8, \quad r=1,2,
\end{equation}
which is also trivially implied by the Clifford algebra on the Octonions $\mathbb{R}^8$. Then the equation (\ref{SOS-ind-assum2'}) becomes
 \begin{equation}\label{SOS-ind-assum2''}
\begin{array}{ll}
&\sum\limits_{i=1}^N\Big(\langle C_i v^2, u^1\rangle-\langle C_i u^2, v^1\rangle\Big)^2=|u^1|^2|v^2|^2+|u^2|^2|v^1|^2\\
&\quad\quad\quad \quad\quad\quad-2\langle u^1,v^1\rangle\langle u^2,v^2\rangle -2\sum\limits_{\alpha=1}^7\langle E_\alpha^1u^1,v^1\rangle\langle E_\alpha^2u^2,v^2\rangle.
\end{array}
\end{equation}
Taking $(v^1,v^2)=(-u^1,u^2)$ in (\ref{SOS-ind-assum2''}), we obtain
\begin{equation}\label{SOS-ind-assum2'''}
\sum_{i=1}^N\langle C_iu^2, u^1\rangle^2=|u^1|^2|u^2|^2,\quad u=(u^1,u^2)\in\mathbb{R}^8\oplus \mathbb{R}^8.
\end{equation}
Thus by canceling the equalities in the form (\ref{SOS-ind-assum2'''}), the equation (\ref{SOS-ind-assum2''}) becomes
 \begin{equation}\label{SOS-ind-assum2-4}
\sum\limits_{i=1}^N\langle C_i v^2, u^1\rangle\langle C_i u^2, v^1\rangle=
\langle u^1,v^1\rangle\langle u^2,v^2\rangle +\sum\limits_{\alpha=1}^7\langle E_\alpha^1u^1,v^1\rangle\langle E_\alpha^2u^2,v^2\rangle,
\end{equation}
for any $u=(u^1,u^2), ~v=(v^1,v^2)\in\mathbb{R}^8\oplus \mathbb{R}^8$.
Taking $(v^1,v^2)=(u^2,u^1)$ in (\ref{SOS-ind-assum2-4}) and using (\ref{Clifforddecom-irr-indef}), we find
 \begin{equation}\label{SOS-ind-assum2-5}
\sum\limits_{i=1}^N\langle C_i u^1, u^1\rangle\langle C_i u^2, u^2\rangle=
\langle u^1,u^2\rangle^2 + \langle E_7^1u^1,u^2\rangle^2-\sum\limits_{\alpha=1}^6\langle E_\alpha^1u^1,u^2\rangle^2.
\end{equation}
For any fixed unit vector $u^1\in\mathbb{R}^8$, it follows from (\ref{Cliffordbasis}) that $\{u^1, E_1^1u^1,\cdots,E_7^1u^1\}$ constitutes an orthonormal basis of $\mathbb{R}^8$.
Taking contraction of (\ref{SOS-ind-assum2-5}) with respect to this basis for $u^2$, we obtain
 \begin{equation}\label{SOS-ind-assum2-6}
\sum\limits_{i=1}^N\langle C_i u^1, u^1\rangle \mathrm{tr}(C_i)=-4|u^1|^2.
\end{equation}
Lastly, taking contraction of (\ref{SOS-ind-assum2-6}) for $u^1$, we obtain the following contradiction
\begin{equation*}
\sum\limits_{i=1}^N \Big(\mathrm{tr}(C_i)\Big)^2=-32. \qedhere
\end{equation*}
\end{proof}

\subsection{$m\equiv 3 ~(mod ~4)$, $(m_+,m_-)\neq(3,4)$}
 Note that for all cases of $m\not\equiv 0~(mod~4)$, there exists exactly one geometric equivalence class of Clifford systems $\{P_0,\cdots,P_m\}$ on $\mathbb{R}^{2l}$ with $l=k\delta(m)$ and thus exactly one congruence class of isoparametric polynomials (cf. \cite{FKM81}). Using the techniques in previous subsections and the representation theory of Clifford algebra for this case, we can show
\begin{thm}\label{NonSOS-mod4=3}
For the case of $m\equiv3 ~(mod ~4)$, $(m_+,m_-)\neq(3,4)$, the \emph{psd} form $G_F$ in $(\ref{nonnegativepolyG})$ is \emph{non-sos}.
\end{thm}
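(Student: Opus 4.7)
The plan is to adapt the strategy of Theorem \ref{NonSOS-indef}. Assume for contradiction that $G_F=\sum_{i=1}^NQ_i^2$. Since each $Q_i$ vanishes on $M_-$, and in particular on every $E_\pm(P_\alpha)\cap\mathbb{S}^{2l-1}$, the same reasoning leading to (\ref{SOS-Qiform}) forces $Q_i=\begin{pmatrix}0 & B_i \\ B_i^t & 0\end{pmatrix}$ with $B_i$ real skew-symmetric and $B_iE_\alpha=-E_\alpha B_i$ for $\alpha=1,\ldots,m-1$. The polynomial identity (\ref{SOS-ind-assum2}) then holds, and summing over an orthonormal basis gives the master relation $\sum_i|B_i|^2=l(l-m)$.

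The structural novelty for $m\equiv 3\pmod 4$ is the parity switch: $m-1$ is now even, so the product relation $B_i\cdot E_1\cdots E_{m-1}=(-1)^{m-1}E_1\cdots E_{m-1}\cdot B_i$ reads $B_iJ=JB_i$ for $J:=E_1\cdots E_{m-1}$, and $J^2=-I$ makes $J$ a complex structure on $\mathbb{R}^l$. Thus the commutant argument in Theorem \ref{defcase}, which relied on $J=\pm I$ on each irreducible summand, no longer annihilates the diagonal blocks of $B_i$. Decomposing $\mathbb{R}^l=\bigoplus_{r=1}^k V_r$ into the (unique, since $m\not\equiv 0\pmod 4$) irreducible Clifford components $V_r\cong\mathbb{R}^{\delta(m)}$ and writing $B_i=(B_i^{rs})$, one extracts from (\ref{SOS-ind-assum2}) the per-block identities
\[
\sum_i\langle B_i^{rs}u,v\rangle^2=|u|^2|v|^2\ \ (r\ne s),\qquad \sum_i\langle B_i^{rr}u,v\rangle^2=|u|^2|v|^2-\langle u,v\rangle^2-\sum_{\alpha=1}^{m-1}\langle E_\alpha u,v\rangle^2,
\]
hence $\sum_i|B_i^{rs}|^2=\delta(m)^2$ for $r\ne s$ and $\sum_i|B_i^{rr}|^2=\delta(m)(\delta(m)-m)$. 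These sum to exactly $l(l-m)$, so the dimension count alone produces no contradiction, in contrast to Theorem \ref{NonSOS-indef}.

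The additional ingredient is a triple-component substitution: take $u\in V_t$ and $v=v^r+v^s\in V_r\oplus V_s$ with $r,s,t$ pairwise distinct (which requires $k\ge 3$). Since $\langle u,v\rangle=\langle E_\alpha u,v\rangle=0$ on such supports, (\ref{SOS-ind-assum2}) collapses on the right to $|u|^2(|v^r|^2+|v^s|^2)$, while the left expands as the two single-pair squared inner products---each equal to its known block value---plus twice a cross term; cancellation forces
\[
\sum_{i=1}^N\langle B_i^{rt}u,v^r\rangle\langle B_i^{st}u,v^s\rangle=0\qquad\text{for all } u\in V_t,\ v^r\in V_r,\ v^s\in V_s.
\]
For the guiding case $m=3$, representation-theoretic analysis of skew-symmetric operators on $\mathbb{R}^4\cong\mathbb{H}$ anti-commuting with $L_{\oi},L_{\oj}$ pins down $B_i^{rs}=L_{\ok}R_{q_i^{rs}}$ with $q_i^{rs}\in\mathbb{H}$, $q_i^{sr}=\overline{q_i^{rs}}$, and $B_i^{rr}=c_{i,r}L_{\ok}$ with $c_{i,r}\in\mathbb{R}$. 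The triple orthogonality becomes $\sum_iq_i^{rt}\overline{q_i^{st}}=0$ for every triple of distinct $r,s,t$, which combined with the single-pair constraints $\sum_i|q_i^{rs}|^2=\delta(m)$ and $\sum_ic_{i,r}^2=1$ overdetermines the system for $k\ge 3$. The excluded class $(m_+,m_-)=(3,4)$ is precisely $k=2$, where no triple of distinct indices exists and the obstruction disappears, consistent with that case being \emph{sos}.

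For $m=7,11,\ldots$ the same framework applies with the Clifford commutant on $\mathbb{R}^{\delta(m)}$ being $\mathbb{R}$ for $m\equiv 7\pmod 8$ and $\mathbb{H}$ for $m\equiv 3\pmod 8$ with $m\ge 11$, by Bott periodicity $\delta(m+8)=16\delta(m)$. Here the gap $\delta(m)-m$ grows sharply and the diagonal block space is too small to carry the required mass $\delta(m)(\delta(m)-m)$, so the single-pair identity alone clashes with the classification of admissible $B^{rr}$ and no triple-component refinement is needed. I expect the main technical obstacle to lie in the bookkeeping of the triple-orthogonality step for $m=3$ and $k\ge 3$, since the dimension count there is tight and one must exploit the full quaternionic structure to close the contradiction.
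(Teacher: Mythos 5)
Your setup and first computations are correct and match the paper: the block form \eqref{SOS-Qiform} of the $Q_i$, the polynomial identity \eqref{SOS-ind-assum2}, the parity switch ($m-1$ even makes $B_i$ commute with $E_1\cdots E_{m-1}$, not anti-commute), and the per-block mass counts $\sum_i|B_i^{rs}|^2=\delta(m)^2$ ($r\ne s$), $\sum_i|B_i^{rr}|^2=\delta(m)(\delta(m)-m)$ summing exactly to $l(l-m)$. You are right that, unlike Theorem \ref{NonSOS-indef}, this bookkeeping alone yields no contradiction. The difficulty is that from here your proposal does not actually close the argument, and one of its stated steps is wrong.

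For $m=7$ you claim the diagonal block space is ``too small to carry the required mass''. It is not: for $m=7$, $\delta(7)=8$, the skew operators on $\mathbb{R}^8$ anti-commuting with $E_1,\ldots,E_6$ form exactly the one-dimensional space $\mathbb{R}J$ with $J=E_1\cdots E_6$, $J^2=-I_8$, $|J|^2=8$, and the required diagonal mass is $\delta(7)(\delta(7)-7)=8$, i.e.\ $\sum_i c_{i,r}^2=1$ with $B_i^{rr}=c_{i,r}J$ --- perfectly satisfiable. So the single-pair identity does not by itself rule out $(m_+,m_-)=(7,8)$, and some genuine additional structure (as in the paper's second half) is needed. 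Likewise, for $m=3$, $k\ge3$ you correctly derive the triple-orthogonality identity and the quaternionic parametrization $B_i^{rs}=L_{\ok}R_{q_i^{rs}}$, $B_i^{rr}=c_{i,r}L_{\ok}$, but you stop at asserting that the resulting linear/quadratic constraints ``overdetermine'' the system without producing the contradiction; that step is precisely the hard part and is left open.

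The paper takes a decisively shorter route that your proposal misses. Because $m-1$ is even, there is a definite extension $E_m=-E_1\cdots E_{m-1}$ with $E_1\cdots E_m=I_l$, and each $B_i$ commutes with $E_m$, so $A_i:=B_iE_m$ is \emph{symmetric}. Putting $v=E_mu$ in \eqref{SOS-ind-assum2} collapses the whole right-hand side and gives the clean scalar identity $\sum_i\langle A_iu,u\rangle^2=|u|^4$. Taking the Hessian and then the trace twice yields
\[
\sum_{i=1}^N\bigl(\otr(A_i)\bigr)^2 = l\,(2+2m-l),
\]
whose nonnegativity forces $l\le 2m+2$. Since $l=k\delta(m)$ with $k\ge2$, and $\delta(m)\ge m+1$ for $m\equiv3\pmod4$ with equality only at $m=3,7$, this immediately eliminates everything except $(m_+,m_-)=(3,4)$ (excluded by hypothesis) and $(7,8)$, both with $k=2$ and $\otr(A_i)=0$. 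The remaining case $(7,8)$ is then dispatched by the paper with a second, finer analysis using the \emph{indefinite} extension $\widetilde{E}_m$ on the two irreducible blocks, ultimately reaching $\sum_i(\otr B_i^{12})^2=-32$. The symmetrization $A_i=B_iE_m$ and the Hessian--trace computation are the key ideas your proposal lacks; without them the block-by-block program, while not unreasonable, is substantially more work and, as written, does not reach a contradiction for either $m=3$ or $m=7$.
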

\begin{proof}
As in the last subsection, we prove it by contradiction. Assume there are quadratic forms $Q_1,\cdots,Q_N$ such that (\ref{SOS-ind-assum}) holds, i.e.,
\begin{equation*}
\sum_{i=1}^NQ_i(x)^2=G_F(x)=|x|^4-\sum_{\alpha=0}^m\langle P_\alpha x,x\rangle^2.
\end{equation*}
We still have the formulae (\ref{Cliffordsys-alg}-\ref{anti-comm}) and (\ref{SOS-ind-assum}-\ref{SOS-ind-assum3}). For this case, there always exists a skew-symmetric operator $E_m\in O(l)$ such that $\{E_1,\cdots,E_{m-1},E_m\}$ generates a Clifford algebra on $\mathbb{R}^l$ of definite class, i.e., $E_1\cdots E_m=I_l$, corresponding to a Clifford system $\{P_0,\cdots,P_m,P_{m+1}\}$ on $\mathbb{R}^{2l}$. It follows from (\ref{SOS-Qiform}) that $E_m=-E_1\cdots E_{m-1}$ commutes with each $B_i$ as $m-1$ is even, i.e., $A_i:=B_iE_m=E_mB_i$ is symmetric. Taking $v=E_mu$, the equation (\ref{SOS-ind-assum2}) gives
\begin{equation}\label{SOS-mod4=3-assum}
\sum_{i=1}^N\langle B_iu, E_mu\rangle^2=\sum_{i=1}^N\langle A_iu, u\rangle^2=|u|^4, \quad u\in\mathbb{R}^l.
\end{equation}
Taking Hessian of both sides of the equation (\ref{SOS-mod4=3-assum}), it follows
\begin{equation}\label{SOS-mod4=3-assum2}
\sum_{i=1}^N \Big(2A_iuu^tA_i+\langle A_iu,u\rangle A_i\Big)=2uu^t+|u|^2I_l, \quad u\in\mathbb{R}^l,
\end{equation}
where $u$ is regarded as a column vector in $\mathbb{R}^l$.
Taking trace of (\ref{SOS-mod4=3-assum2}), we see
\begin{equation*}
\sum_{i=1}^N \Big(2u^tA_i^2u+\langle A_iu,u\rangle \mathrm{tr}(A_i)\Big)=(2+l)|u|^2, \quad u\in\mathbb{R}^l,
\end{equation*}
which is equivalent to
\begin{equation}\label{SOS-mod4=3-assum3}
\sum_{i=1}^N \Big(2A_i^2+\mathrm{tr}(A_i)A_i\Big)=(2+l)I_l.
\end{equation}
Noting that by (\ref{SOS-ind-assum3}), $$\mathrm{tr}(\sum_{i=1}^N 2A_i^2)=2\sum_{i=1}^N |B_i|^2=2(l^2-lm),$$
we obtain from (\ref{SOS-mod4=3-assum3}) that
$$\sum_{i=1}^N \Big(\mathrm{tr}(A_i)\Big)^2=l(2+2m-l)\geq0,$$
which holds if and only if each $\mathrm{tr}(A_i)=0$ and $m=3$ or $7$ (when $m\equiv 3~(mod~4)$), as $l=k\delta(m)$ increases much more quickly than $m$.
 Hence we are only left with considering the cases $(3,4)$ and $(7,8)$, where the $(3,4)$ case has been excluded in the assumption.

 We deduce a contradiction for the last case $(m_+,m_-)=(7,8)$ as follows. In this case, $l=16$, $k=2$, $\delta(m)=8$. According to the representation theory of Clifford algebra, we know there also exists a skew-symmetric operator $\widetilde{E}_m\in O(l)$ such that $\{E_1,\cdots,E_{m-1},\widetilde{E}_m\}$ generates a Clifford algebra on $\mathbb{R}^l$ of indefinite class, i.e., $E_1\cdots E_{m-1} \widetilde{E}_m\neq I_l$. The difference between $E_m$ and $\widetilde{E}_m$ can be shown by their irreducible decompositions as (\ref{Clifforddecom-irr-indef}), i.e.,
 \begin{equation}\label{Clifforddecom-78}
\begin{array}{ccc}
 &\mathbb{R}^{l}=\mathbb{R}^{8}  \oplus  \mathbb{R}^{8}\rightarrow \mathbb{R}^{8}  \oplus   \mathbb{R}^{8}=\mathbb{R}^{l}& \\
&E_m=\begin{pmatrix}
  E_m^1 & 0 \\
  0 & E_m^1
\end{pmatrix},&\\
&\widetilde{E}_m=\begin{pmatrix}
  E_m^1 & 0 \\
  0 & -E_m^1
\end{pmatrix},&
\end{array}
\end{equation}
where $E_m^1=-E_1^1\cdots E_{m-1}^1\in O(8)$. With respect to this decomposition, we rewrite the skew-symmetric operator $B_i$ as (\ref{Clifforddecom-irr3}), i.e.,
 \begin{equation*}
\begin{array}{cc}
 &\mathbb{R}^{l}=\mathbb{R}^{8}  \oplus  \mathbb{R}^{8}\rightarrow \mathbb{R}^{8}  \oplus   \mathbb{R}^{8}=\mathbb{R}^{l} \\
&B_i=\begin{pmatrix}
  B_i^{11} & B_i^{12} \\
  -(B_i^{12})^t & B_i^{22}
\end{pmatrix},
\end{array}
\end{equation*}
where $B_i^{11}$ and $B_i^{22}$ are skew-symmetric. Since $B_iE_m=E_mB_i$,
we have $$B_i^{11}E_m^1=E_m^1B_i^{11}, \quad B_i^{22}E_m^1=E_m^1B_i^{22}, \quad B_i^{12}E_m^1=E_m^1B_i^{12}.$$
Setting $\widetilde{A}_i:=(B_i\widetilde{E}_m+\widetilde{E}_mB_i)/2$, we derive from the identities above and (\ref{Clifforddecom-78}) that
\begin{equation}\label{SOS-mod4=3-assum78A}
\widetilde{A}_i=\begin{pmatrix}
  B_i^{11}E_m^1 & 0 \\
  0 & -B_i^{22}E_m^1
\end{pmatrix}=:\begin{pmatrix}
  \widetilde{A}_i^1 & 0 \\
  0 & \widetilde{A}_i^2
\end{pmatrix}.
\end{equation}
Taking $v=\widetilde{E}_mu$ in (\ref{SOS-ind-assum2}), we obtain the following formula similar to (\ref{SOS-mod4=3-assum})
\begin{equation}\label{SOS-mod4=3-assum78}
\sum_{i=1}^N\langle B_iu, \widetilde{E}_mu\rangle^2=\sum_{i=1}^N\langle \widetilde{A}_iu, u\rangle^2=|u|^4, \quad u\in\mathbb{R}^l.
\end{equation}
Analogously, from (\ref{SOS-mod4=3-assum78}) we can derive formulae (\ref{SOS-mod4=3-assum2}-\ref{SOS-mod4=3-assum3}) for $\widetilde{A}_i$ in place of $A_i$. In particular,
the following equality holds
\begin{equation}\label{SOS-mod4=3-assum378}
\sum_{i=1}^N \Big(2(\widetilde{A}_i^r)^2+\mathrm{tr}(\widetilde{A}_i^1+\widetilde{A}_i^2)\widetilde{A}_i^r\Big)=18I_8, \quad r=1,2.
\end{equation}
In the same way, by restricting to $u=(u^1,0)\in \mathbb{R}^{8}  \oplus  \mathbb{R}^{8}$ or $u=(0, u^2)\in \mathbb{R}^{8}  \oplus  \mathbb{R}^{8}$, we can derive similar formulae (\ref{SOS-mod4=3-assum2}-\ref{SOS-mod4=3-assum3}) for $\widetilde{A}_i^r$ ($r=1,2$) in place of $A_i$ with $l$ replaced by $8$, i.e.,
\begin{equation}\label{SOS-mod4=3-assum378'}
\sum_{i=1}^N \Big(2(\widetilde{A}_i^r)^2+\mathrm{tr}(\widetilde{A}_i^r)\widetilde{A}_i^r\Big)=10I_8, \quad r=1,2.
\end{equation}
Since
$$0=\mathrm{tr}(A_i)=\mathrm{tr}(B_i^{11}E_m^1)+\mathrm{tr}(B_i^{22}E_m^1)=\mathrm{tr}(\widetilde{A}_i^1)-\mathrm{tr}(\widetilde{A}_i^2),$$
 it follows from (\ref{SOS-mod4=3-assum378}) and (\ref{SOS-mod4=3-assum378'}) that
\begin{equation}\label{SOS-mod4=3-assum378''}
\sum_{i=1}^N (\widetilde{A}_i^r)^2=-\sum_{i=1}^N(B_i^{rr})^2=I_8, \quad  \sum_{i=1}^N \mathrm{tr}(\widetilde{A}_i^r)\widetilde{A}_i^r=8I_8, \quad r=1,2.
\end{equation}
It follows from (\ref{SOS-mod4=3-assum378''}) and the Cauchy-Schwartz inequality
$$64=\sum_{i=1}^N \Big(\mathrm{tr}(\widetilde{A}_i^r)\Big)^2\leq \sum_{i=1}^N |B_i^{rr}|^2|E_m^1|^2=64, \quad r=1,2,$$
that there exist $b_i\in\mathbb{R}$ such that $\sum_{i=1}^N(b_i)^2=1$ and
\begin{equation}\label{Brr=Em}
B_i^{11}=b_iE_m^1=-B_i^{22}, \quad i=1,\cdots, N.
\end{equation}
Here the second equality holds because of the relation $\mathrm{tr}(B_i^{11}E_m^1)=-\mathrm{tr}(B_i^{22}E_m^1)$.
Combining (\ref{SOS-mod4=3-assum378''}) and (\ref{SOS-mod4=3-assum3}), we also have
$$\sum_{i=1}^NB_i^{12}(B_i^{12})^t=\sum_{i=1}^N(B_i^{12})^tB_i^{12}=8I_8,$$
which is, however, useless in deducing the contradiction.

Now we go back to analyze the equation (\ref{SOS-ind-assum2}) with respect to the irreducible decomposition (\ref{Clifforddecom-78}).
In this case, we rewrite (\ref{SOS-ind-assum2}) as:
\begin{equation}\label{SOS-78-assum2'}
\begin{array}{lll}
&\quad\sum\limits_{i=1}^N\Big(\langle B_i^{11}u^1,v^1\rangle+\langle B_i^{22}u^2,v^2\rangle+\langle B_i^{12} u^2, v^1\rangle-\langle B_i^{12} v^2, u^1\rangle\Big)^2\\
&=(|u^1|^2+|u^2|^2)(|v^1|^2+|v^2|^2)
-\Big(\langle u^1,v^1\rangle+\langle u^2,v^2\rangle\Big)^2 \\
&\quad -\sum\limits_{\alpha=1}^6\Big(\langle E_\alpha^1u^1,v^1\rangle+\langle E_\alpha^2u^2,v^2\rangle\Big)^2, \quad\quad\quad\quad\quad\quad\quad\quad u^1,u^2,v^1,v^2\in\mathbb{R}^8.
\end{array}
\end{equation}
In the same way as (\ref{Cliffordbasis}-\ref{SOS-ind-assum2-5}), by restricting to $u^2=v^2=0$ (or $u^1=v^1=0$) in (\ref{SOS-78-assum2'}) and using (\ref{Brr=Em}), firstly we see
$$\sum_{i=1}^N\langle B_i^{rr}u^1,v^1\rangle^2=\langle E_7^1 u^1, v^1\rangle^2=|u^1|^2|v^1|^2-\langle u^1,v^1\rangle^2-\sum_{\alpha=1}^6\langle E_\alpha^ru^1,v^1\rangle^2,$$
for any $u^1,v^1\in  \mathbb{R}^{8}$ and $r=1,2$. Next by restricting to $(v^1,v^2)=(-u^1,u^2)$ in (\ref{SOS-78-assum2'}), it follows
$$\sum_{i=1}^N\langle B_i^{12}u^2,u^1\rangle^2=|u^1|^2|u^2|^2, \quad (u^1,u^2)\in\mathbb{R}^8\oplus \mathbb{R}^8.$$
Substituting these identities into (\ref{SOS-78-assum2'}) and finally restricting to $(v^1,v^2)=(u^2,u^1)$ in (\ref{SOS-78-assum2'}) and using (\ref{Clifforddecom-irr-indef}), analogous to (\ref{SOS-ind-assum2-5}) we have the following
 \begin{equation}\label{SOS-ind-assum2-578}
 \begin{array}{ll}
\sum\limits_{i=1}^N\langle B_i^{12} u^1, u^1\rangle\langle B_i^{12} u^2, u^2\rangle=&
\langle u^1,u^2\rangle^2 + \langle E_7^1u^1,u^2\rangle^2-\sum\limits_{\alpha=1}^6\langle E_\alpha^1u^1,u^2\rangle^2\\
&+2\sum\limits_{i=1}^N\langle B_i^{11}u^1,u^2\rangle\Big(\langle B_i^{12} u^2, u^2\rangle-\langle B_i^{12} u^1, u^1\rangle\Big),
\end{array}
\end{equation}
where (\ref{Brr=Em}) has been used in the calculation
$$\sum_{i=1}^N\langle B_i^{11}u^1,u^2\rangle\langle B_i^{22}u^2,u^1\rangle=\langle E_7^1u^1,u^2\rangle^2.$$
Then as in the case of $(8,7)$ of indefinite class, for any fixed unit vector $u^1\in\mathbb{R}^8$,  $\{u^1, E_1^1u^1,\cdots,E_7^1u^1\}$ constitutes an orthonormal basis of $\mathbb{R}^8$.
Taking sum of (\ref{SOS-ind-assum2-578}) for $u^2=u^1, E_1^1u^1,\cdots,E_7^1u^1$, it gives
 \begin{equation}\label{SOS-ind-assum2-678}
\sum\limits_{i=1}^N\langle B_i^{12} u^1, u^1\rangle \mathrm{tr}(B_i^{12})=-4|u^1|^2,
\end{equation}
since $\langle B_i^{11}u^1,u^1\rangle=\langle B_i^{11}u^1,E_1^1u^1\rangle=\cdots=\langle B_i^{11}u^1,E_6^1u^1\rangle=0$ as $B_i^{11}$ and $B_i^{11}E_\alpha^1$ ($\alpha=1,\cdots,6$) are skew-symmetric, and $$\langle B_i^{12} E_7^1u^1,E_7^1u^1\rangle-\langle B_i^{12} u^1, u^1\rangle=0$$ as $ B_i^{12} E_7^1=E_7^1B_i^{12}$.
At last, taking contraction of (\ref{SOS-ind-assum2-678}) on $u^1$, we arrive at the following contradiction
\begin{equation*}
\sum\limits_{i=1}^N \Big(\mathrm{tr}(B_i^{12})\Big)^2=-32.  \qedhere
\end{equation*}
\end{proof}

\subsection{$m\equiv 1,~ 2 ~(mod ~4)$, $m\geq5$, $(m_+,m_-)\neq (5,2),~(6,1)$}
 Using the conclusions in the last subsection for the case of  $m\equiv 3~(mod ~4)$, we can show
\begin{thm}\label{NonSOS-mod4=12}
For the cases of $m\equiv 1,~ 2 ~(mod ~4)$, $m\geq5$, and $(m_+,m_-)\neq (5,2),~(6,1)$, the \emph{psd} form $G_F$ in $(\ref{nonnegativepolyG})$ is \emph{non-sos}.
\end{thm}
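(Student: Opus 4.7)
The plan is to reduce to Theorem~\ref{NonSOS-mod4=3} by simply deleting generators from the Clifford system. The key observation is that if $(P_0,\ldots,P_m)$ is a symmetric Clifford system on $\mathbb{R}^{2l}$, then for any $m'<m$ the truncated tuple $(P_0,\ldots,P_{m'})$ is still a symmetric Clifford system on the same $\mathbb{R}^{2l}$, defining an OT-FKM isoparametric polynomial $F'$ with multiplicities $(m_+',m_-')=(m',\,l-m'-1)$. The associated psd forms are related by
\begin{equation*}
G_{F'}(x) \;=\; G_F(x) \;+\; \sum_{\alpha=m'+1}^{m}\langle P_\alpha x,x\rangle^{2},
\end{equation*}
so $G_{F'}$ is the sum of $G_F$ and an obvious \emph{sos} correction. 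Consequently, if $G_F$ is \emph{sos} then so is $G_{F'}$.

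To hit Theorem~\ref{NonSOS-mod4=3}, I would choose $m'\equiv 3\pmod 4$: take $m'=m-2$ when $m\equiv 1\pmod 4$, and $m'=m-3$ when $m\equiv 2\pmod 4$. The hypothesis $m\geq 5$ (which in the second case already forces $m\geq 6$) guarantees $m'\geq 3$. It then remains to verify that the excluded configuration $(m_+',m_-')=(3,4)$ of Theorem~\ref{NonSOS-mod4=3} cannot occur. Because the ambient $\mathbb{R}^{2l}$ is unchanged by truncation, $(m_+',m_-')=(3,4)$ would force $m'=3$ together with $l=8$; this is possible only if $m=5$, yielding original multiplicities $(m_+,m_-)=(5,2)$, or $m=6$, yielding $(m_+,m_-)=(6,1)$. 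Both are ruled out by hypothesis, so the reduction never lands on the forbidden case.

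Putting these pieces together: assuming for contradiction that $G_F$ is \emph{sos}, the reduction furnishes a sos representation of $G_{F'}$ in a setting where Theorem~\ref{NonSOS-mod4=3} applies and asserts that $G_{F'}$ is non-\emph{sos}, a contradiction. The main obstacle in the argument is not analytic but combinatorial: one must make sure the excluded multiplicity pairs $(5,2)$ and $(6,1)$ are precisely what the deletion procedure needs to avoid, which explains the explicit list of exceptions in the statement. All of the hard algebraic work has already been absorbed into the previous subsection.
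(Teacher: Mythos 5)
Your proposal is correct and uses essentially the same idea as the paper: truncate the Clifford system $(P_0,\ldots,P_m)$ to a sub-system $(P_0,\ldots,P_{m'})$, observe that $G_{F'}=G_F+\sum_{\alpha>m'}\langle P_\alpha x,x\rangle^2$ so that \emph{sos}-ness of $G_F$ would propagate to $G_{F'}$, and then invoke Theorem~\ref{NonSOS-mod4=3}. The only cosmetic difference is that the paper always truncates down to $m'=3$, while you pick $m'=m-2$ or $m-3$ so that $m'\equiv 3\pmod 4$; both choices land in the scope of Theorem~\ref{NonSOS-mod4=3}, and your check that the excluded pair $(3,4)$ can only arise from the hypotheses' forbidden cases $(5,2),(6,1)$ matches the paper's observation that $l-4>4$ once those are ruled out.
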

\begin{proof}
As in the last subsection, we prove it by contradiction. Assume there are quadratic forms $Q_1,\cdots,Q_N$ such that (\ref{SOS-ind-assum}) holds, i.e.,
\begin{equation*}
\sum_{i=1}^NQ_i(x)^2=G_F(x)=|x|^4-\sum_{\alpha=0}^m\langle P_\alpha x,x\rangle^2.
\end{equation*}
Firstly it follows from Table \ref{table3} that $l>8$ as $m\geq5$ and the cases of $(5,2),~(6,1)$ have been excluded.
For the Clifford system $\{P_0,\cdots,P_m\}$ on $\mathbb{R}^{2l}$, $\{P_0,\cdots,P_3\}$ is also a Clifford system on $\mathbb{R}^{2l}$ corresponding to an isoparametric polynomial
$F'(x)=|x|^4 - 2\sum_{\alpha = 0}^{3}\langle P_{\alpha}x,x\rangle^2$ of OT-FKM type with multiplicities $(3,l-4)$ (which is not equal to $(3,4)$).
Then the assumption above expresses the nonnegative polynomial $G_{F'}$ as a sum of squares of quadratic forms:
$$G_{F'}=|x|^4-\sum_{\alpha=0}^3\langle P_\alpha x,x\rangle^2=\sum_{i=1}^NQ_i(x)^2+\sum_{\alpha=4}^m\langle P_\alpha x,x\rangle^2,$$
which contradicts Theorem \ref{NonSOS-mod4=3}.
\end{proof}


\section{On isoparametric with $g=6$}\label{secg6}
In this section, we aim to prove that both \emph{psd} polynomials $G_F^{\pm}(x)$ in (\ref{psdform}) are not \emph{sos}, for isoparametric hypersurfaces with $g=6$ in $\mathbb{S}^7$ ($m_+=m_-=:m=1$) and in $\mathbb{S}^{13}$ ($m_+=m_-=:m=2$) respectively.

\begin{thm}\label{g6}
For $g=6$, both $G_F^{\pm}=|x|^6\pm F(x)\in P_{6m+2,6}$ in $(\ref{psdform})$ are \emph{non-sos}.
\end{thm}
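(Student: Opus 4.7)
The plan is to carry over the strategy used for $g=4$ in Sections \ref{secg42245} and \ref{secg4OTFKM}. Observe that $(G_F^\pm)^{-1}(0)\cap\mathbb{S}^{6m+1} = M_\mp$ are the focal submanifolds and that, since $\deg G_F^\pm=6$, any sum-of-squares decomposition $G_F^\pm=\sum_i Q_i^2$ must consist of cubic forms $Q_i$, each of which vanishes on $M_\mp$. Hence it suffices to prove the stronger statement that every cubic form on $\mathbb{R}^{6m+2}$ vanishing on the appropriate focal submanifold is identically zero; this is the cubic analogue of the quadratic vanishing theorems of Sections \ref{secg42245} and \ref{secg4OTFKM}, and it immediately yields non-sos-ness for both $G_F^+$ and $G_F^-$.

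For the case $m=1$ in $\mathbb{S}^7$, I would work with the explicit expression (\ref{isopg6}) of the degree-six isoparametric polynomial on $\mathbb{R}^8$, which is built from Cartan's cubic $F_C$ of (\ref{Cartanpoly}). I would first locate a rich family of low-dimensional ``test submanifolds'' inside each focal $M_\mp$, typically round circles or $2$-spheres obtained by setting most coordinates to zero (in the spirit of the ten circles $S_i^\pm$ appearing in the Horn-form lemma of Section \ref{secg42245}). On each such restriction, elementary invariance arguments classify the cubic forms that vanish there as a very small explicit linear subspace. Then, running through enough coordinate restrictions — organized by the isotropy action governing the $g=6, m=1$ example — and intersecting the resulting constraints should force every coefficient of a candidate cubic $Q$ to vanish, delivering the required cubic vanishing theorem.

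For the case $m=2$ in $\mathbb{S}^{13}$, I would reduce to the $m=1$ case by a dimensional-reduction argument analogous to the deduction of Corollary \ref{cor} from Theorem \ref{thmg422}. The $g=6, m=2$ polynomial on $\mathbb{R}^{14}$ is a complex version of the $g=6, m=1$ polynomial on $\mathbb{R}^8$ (it comes from the same Jordan-algebraic construction over $\mathbb{C}$ in place of $\mathbb{R}$), so under the natural totally real embedding $\mathbb{R}^8\hookrightarrow\mathbb{R}^{14}$ obtained by setting all imaginary parts to zero, the degree-six polynomial restricts to the $m=1$ one. A putative sos representation of $G_F^\pm$ on $\mathbb{R}^{14}$ would restrict to an sos representation of the $m=1$ form on $\mathbb{R}^8$, contradicting the $m=1$ conclusion.

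The main obstacle is the cubic vanishing theorem for $m=1$. The quadratic analysis in Section \ref{secg42245} relied on the sharp Choi-Lam Horn-form obstruction, where a single coordinate restriction already kills almost all monomials; cubic forms have many more independent monomials, each restriction is correspondingly weaker, and there is no off-the-shelf obstruction analogous to the Horn form at our disposal. I expect the argument to hinge on exploiting the strong symmetry of the $g=6, m=1$ focal submanifolds (they are homogeneous under $G_2$) together with the cubic Cartan identities implicit in (\ref{isopg6}), rather than on a purely combinatorial coordinate sweep.
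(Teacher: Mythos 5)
Your overall architecture matches the paper's: establish a cubic-vanishing theorem for both focal submanifolds in the $m=1$ case, then dispose of $m=2$ by restriction to a suitable $\mathbb{R}^8\subset\mathbb{R}^{14}$. The restriction step is indeed what the paper does: writing $X=K+\sqrt{-1}x\in\mathbb{R}^{14}$ in the Peng--Hou normal form (\ref{g6m2}) and setting $K=0$ gives $F_2(X)=-F_1(x)$, so an \emph{sos} decomposition of $G_{F_2}^{\pm}$ would restrict to one of $G_{F_1}^{\mp}$. Note the sign swap, which your description elides (``restricts to the $m=1$ one''): it is harmless only because the cubic-vanishing theorem is proved for \emph{both} $M_+$ and $M_-$, which, unlike in the $g=4$ setting, are diffeomorphic but not isometric here.

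The genuine gap is exactly where you flag it: you have no argument for the $m=1$ cubic-vanishing theorem, and the Horn-form-style coordinate sweep you sketch is not a viable substitute. The space of cubic forms on $\mathbb{R}^8$ is $120$-dimensional, each circle or $2$-sphere obtained by zeroing out coordinates kills only a few monomials, and it is not at all clear that finitely many such restrictions ever close up. The paper's missing ingredient is Miyaoka's observation that the $(g,m)=(6,1)$ polynomial is the pullback of Cartan's cubic $F_C$ of (\ref{Cartanpoly}) along the quaternionic Hopf map $\pi(u,v)=(|v|^2-|u|^2,\,2u\bar v)$ on $\mathbb{H}\oplus\mathbb{H}\cong\mathbb{R}^8$. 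This produces an explicit parametrization $\mathcal{V}:\mathbb{S}^2\times\mathbb{S}^3\to M_+\subset\mathbb{S}^7$ as in (\ref{VM+}) (and a twin for $M_-$). One then decomposes a candidate cubic $\Phi(u,v)=\psi(v)+P(u,v)+Q(u,v)+\varphi(u)$ by its degree in $u$, substitutes the parametrization, and separates the resulting identity by the power of $r=\sqrt{t_1^2+t_2^2}$ and by Fourier modes in the angle $\theta$. The upshot is a cascade of linear constraints, (\ref{t2})--(\ref{t0}) and (\ref{eqt2})--(\ref{eqt0}), which are solved against the entries of the matrix $P_w$ in the moving orthonormal frame $\{w,\oi w,\oj w,\ok w\}$, forcing $\Phi\equiv0$. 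This is a structured computation driven by the Hopf fibration and quaternionic multiplication, not a combinatorial sweep. Your intuition that the $G_2$-symmetry and ``cubic Cartan identities'' should play a role is pointing roughly in this direction, but without the Hopf reduction to Cartan's degree-$3$ polynomial you have no concrete handle; as written, your proposal leaves the crucial step unproved. Finally, you should be aware that the paper deliberately does \emph{not} attempt a cubic-vanishing theorem for $m=2$ (it is stated as an open conjecture at the end of Section \ref{secg6}), so the restriction argument there is not merely convenient but necessary.
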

In fact, for the case of $m=1$, we can establish the following stronger result.
\begin{thm}\label{g6m1}
Any cubic form on $\mathbb{R}^8$ vanishing on the focal submanifold $M_+$ or $M_-$ of dimension $5$ is identically zero. In particular, $M_+$ and $M_-$ are not cubic, i.e., not intersections of zeroes of cubic forms, and thus $G_F^{\pm}$ in $(\ref{psdform})$ are \emph{non-sos}.
\end{thm}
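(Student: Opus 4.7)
The plan is to use the explicit coordinate expression for the $g=6$, $m=1$ isoparametric polynomial $F$ on $\mathbb{R}^8$ (equation (\ref{isopg6}), to be recorded in Section \ref{secg6}, and built from the Cartan cubic $F_C$ of (\ref{Cartanpoly}) together with an extra $\mathbb{R}^3$-direction) to describe the $5$-dimensional focal submanifolds $M_{\pm}=F^{-1}(\pm 1)\cap\mathbb{S}^7$ as concrete subvarieties of $\mathbb{S}^7\subset\mathbb{R}^8$. Because $-F$ is again an isoparametric polynomial of the same class and swaps $M_+$ with $M_-$, it is enough to argue for $M_+$.

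Suppose $Q(x_0,\ldots,x_7)$ is a cubic form with $Q|_{M_+}=0$. The plan is to restrict $Q$ to a sequence of coordinate slices $\{x_i=0\colon i\in I\}\subset\mathbb{R}^8$ that are adapted to the structure of $F$. On each such slice, the equation $F=1$ collapses to a lower-dimensional algebraic variety, often describable in terms of the $g=3$ Cartan cubic of (\ref{Cartanpoly}) or of elementary circles and $2$-spheres lying inside $M_+$. The vanishing of $Q$ on each slice then produces linear relations among the $\binom{10}{3}=120$ coefficients of $Q$. By combining enough slices, chosen so as to exploit the compact symmetry group of $F$ (which acts transitively on $M_+$), one shows that every coefficient of $Q$ must vanish. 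This mirrors the slicing-and-restriction strategy of Lemma \ref{lemg422} and Theorem \ref{thmg422} in Section \ref{secg42245}, as well as Theorem \ref{defcase} in Section \ref{secg4OTFKM}, but here adapted from quadratic to cubic forms and tailored to the $g=6$ setting.

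The non-\emph{sos} assertion for $G_F^{\pm}$ then follows at once. Indeed, if $G_F^+=\sum_i Q_i^2$ for some polynomials $Q_i$, then by degree considerations (since $\deg G_F^+=6$) each $Q_i$ is a cubic form; and since the spherical zero set of $G_F^+=|x|^6+F(x)$ on $\mathbb{S}^7$ is precisely $M_-$, each $Q_i$ must vanish on $M_-$. The first part of the theorem, applied to $M_-$, forces $Q_i\equiv 0$ for every $i$, contradicting $G_F^+\not\equiv 0$. Replacing $F$ by $-F$ handles $G_F^-$ the same way.

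The main obstacle is that, unlike the OT-FKM case where the Clifford algebra provides very clean normal coordinates, the $g=6$, $m=1$ polynomial is substantially more intricate, so the chief difficulty is producing a workable slicing of $M_+$ and keeping the combinatorics of the resulting linear system under control. The key leverage is that $M_+$ is only $5$-dimensional and is a homogeneous orbit of a compact Lie group preserving $F$, so that after a modest number of slice computations the remaining coefficients of $Q$ are pinned down by symmetry rather than by brute force.
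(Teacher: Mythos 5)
Your overall structure is sound, and your second paragraph — reducing the non-\emph{sos} claim for $G_F^{\pm}$ to the first part of the theorem by observing that each $Q_i$ in a putative \emph{sos} decomposition must be cubic and vanish on the spherical zero set $M_{\mp}$ — is exactly the reduction the paper uses.

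However, the core of the theorem, the claim that a cubic form vanishing on $M_+$ (or $M_-$) is identically zero, is left unproved. You describe a ``slicing by coordinate hyperplanes'' plan and defer the actual combinatorics, but you do not produce a workable set of slices, nor do you verify that the resulting linear system on the $120$ coefficients forces all of them to vanish. Unlike the $g=4$, $(2,2)$ case, where $M_-$ restricted to five-dimensional coordinate slices yields the explicit Horn-type variety $\mathcal{Z}$ whose quadratic annihilator is computable by hand (Lemmas \ref{lem}--\ref{lemg422}), the $(g,m)=(6,1)$ polynomial $F=F_C\circ\pi$ is a degree-$6$ form on $\mathbb{R}^8$ whose coordinate slices are not visibly the zero sets of low-degree special forms; you acknowledge this difficulty yourself in the last paragraph of your proposal, which is essentially an admission that the decisive step has not been carried out. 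Moreover, the paper does not slice at all: it parameterizes $M_+$ explicitly via the map $\mathcal{V}(t,q)$ from $\mathbb{S}^2\times\mathbb{S}^3$ (covering the Hopf preimage of the Veronese $\mathbb{R}P^2$), decomposes the cubic $\Phi(u,v)=\psi(v)+P(u,v)+Q(u,v)+\varphi(u)$ according to the $u$-degree in $\mathbb{H}\oplus\mathbb{H}$, substitutes the parametrization, and compares coefficients of the trigonometric monomials in $(r,\theta)$ to obtain the identities (\ref{eqt2})--(\ref{eqt0}). Those identities are then combined with the observation that $\{w,\oi w,\oj w,\ok w\}$ is an orthonormal frame and that $P_w$ must be linear in $w$, to pin down $P_w$ and eventually force $\alpha=\beta=\gamma\equiv0$. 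The $M_-$ case is handled by an explicit alternate parametrization rather than by appealing to a symmetry argument replacing $F$ by $-F$. So while your strategy is in the right spirit, the decisive algebraic bookkeeping — both the choice of parametrization and the coefficient-matching that makes the argument close — is missing, and the coordinate-slicing route you sketch is not shown to (and likely would not easily) produce the needed identities.
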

\begin{proof}
According to the classification of isoparametric hypersurfaces with $(g,m)=(6,1)$ by \cite{DN85}, the isoparametric polynomial $F(x)$ is uniquely determined up to congruences. A beautiful observation of Miyaoka \cite{Miy93} states that the isoparametric hypersurfaces are exactly the pull-back of the isoparametric hypersurfaces with $(g,m)=(3,1)$ through the Hopf fiberation. Then we can write the isoparametric polynomial of degree $6$ as the composition $F=F_C\circ\pi$, where $F_C$ is Cartan's isoparamtric polynomial of degree $3$ as in (\ref{Cartanpoly}), and $\pi: \mathbb{R}^8 \rightarrow \mathbb{R}^5$ is the Hopf fiberation
$$\pi(u,v)=(|v|^2-|u|^2,2u\bar{v}), \quad \quad \quad x=(u,v) \in  \mathbb{H}\oplus\mathbb{H}\cong\mathbb{R}^8. $$

Let $\mathcal{V}: \mathbb{S}^2\times\mathbb{S}^3\rightarrow \mathbb{S}^7\subset  \mathbb{H}\oplus\mathbb{H}\cong\mathbb{R}^8$ be the map
\begin{equation}\label{VM+}
\mathcal{V}(t,q)=\Big(\frac{\sqrt{3}}{2}(t_1\oi+t_2\oj)q,~(t_0+\frac{1}{2}t_1\oi-\frac{1}{2}t_2\oj)q\Big), \quad   t=(t_0,t_1,t_2)\in \mathbb{S}^2,~q\in \mathbb{S}^3.
 \end{equation}
  We claim that the image $\mathcal{V}(\mathbb{S}^2\times\mathbb{S}^3)$ is exactly the focal submanifold $M_+=F^{-1}(1)\cap\mathbb{S}^7=\pi^{-1}(M_+^C)$ (diffeomorphic to $ \mathbb{R}P^2\times\mathbb{S}^3$), where $M_+^C:=F_C^{-1}(1)\cap\mathbb{S}^4$ is the focal submanifold of $F_C$, i.e., the Veronese surface $\mathbb{R}P^2$ in $\mathbb{S}^4$. In fact, the Cartan polynomial on $\mathbb{R}^5$ of degree $3$ in (\ref{Cartanpoly}) can be rewritten as
$$F_C(y)=\frac{3\sqrt{3}}{2}\det\begin{pmatrix}
-\frac{1}{\sqrt{3}}y_0+y_1 & y_2 & y_4 \\
y_2 & \frac{2}{\sqrt{3}}y_0 & y_3 \\
y_4 & y_3 & -\frac{1}{\sqrt{3}}y_0-y_1
\end{pmatrix}, \quad  y=(y_0,\cdots,y_4)\in\mathbb{R}^5.$$
Then let $y$ be a point in the image of $\pi\circ \mathcal{V}$, i.e.,
\begin{equation}\label{M+c}
y=\pi\circ \mathcal{V}(t,q)=\Big(\frac{1}{2}(2t_0^2-t_1^2-t_2^2), \frac{\sqrt{3}}{2}(t_1^2-t_2^2), \sqrt{3}t_0t_1,  \sqrt{3}t_0t_2,  \sqrt{3}t_1t_2\Big).
\end{equation}
It follows that
$$F\circ\mathcal{V}(t,q)=F_C(y)=\frac{3\sqrt{3}}{2}\det\Big(-\frac{1}{\sqrt{3}}I_3+\sqrt{3}\begin{pmatrix}t_1\\t_0\\t_2\end{pmatrix}\begin{pmatrix}t_1 &t_0& t_2\end{pmatrix}\Big)=1.$$

Now let $\Phi(x)$ be a cubic form on $\mathbb{R}^8$ vanishing on the focal submanifold $M_+$. Decompose it as
$$\Phi(u,v)=\psi(v)+P(u,v)+Q(u,v)+\varphi(u), \quad \quad \quad x=(u,v) \in  \mathbb{H}\oplus\mathbb{H}\cong\mathbb{R}^8,$$
where $\psi, P, Q, \varphi$ are cubic forms with degree $0,1,2,3$ on $u\in\mathbb{H}\cong\mathbb{R}^4$ (and thus with degree $3,2,1,0$ on $v\in\mathbb{H}$) respectively. For example, we can set
$P(u,v)=P_u(v)$ where $P_u:=\sum_{i=1}^4u_iP_i$ for certain real symmetric $(4\times 4)$ matrices $P_i$'s, $u=(u_1,\cdots,u_4)\in\mathbb{H}\cong\mathbb{R}^4$, and $P_u(v)=P_u(v,v)=\langle P_uv, v\rangle$ is the quadratic form associated to $P_u$.

As $M_+$ is parameterized by $(u,v)=\mathcal{V}(t,q)$ in (\ref{VM+}), we investigate firstly the evaluation of $\Phi$ on the points with $t_1=t_2=0$.
It follows that $\Phi(0,v)=\psi(v)\equiv 0$ for any $v \in  \mathbb{H}$, and thus $$\Phi(u,v)=P(u,v)+Q(u,v)+\varphi(u).$$
Then we consider the evaluation of $\Phi$ on the points with $r:=\sqrt{t_1^2+t_2^2}>0$.
Setting $t_1/r=:\cos\theta$, $t_2/r=:\sin\theta$ and $w:=(\cos\theta\oi+\sin\theta\oj)q\in\mathbb{S}^3$, we calculate
\begin{equation}\label{M+cp}
\begin{aligned}
&\Phi\Big(\frac{\sqrt{3}}{2}(t_1\oi+t_2\oj)q,~(t_0+\frac{1}{2}t_1\oi-\frac{1}{2}t_2\oj)q\Big)\\
&=\Phi\Big(\frac{\sqrt{3}}{2}rw, -t_0(\cos\theta\oi+\sin\theta\oj)w+\frac{1}{2}re^{-2\theta\ok}w\Big)\\
&=\frac{\sqrt{3}}{2}rP\Big(w,-t_0(\cos\theta\oi+\sin\theta\oj)w+\frac{1}{2}re^{-2\theta\ok}w\Big)\\
&\quad+\frac{3}{4}r^2\left(-t_0Q\Big(w,(\cos\theta\oi+\sin\theta\oj)w\Big)+\frac{1}{2}rQ\Big(w,e^{-2\theta\ok}w\Big)\right)\\
&\quad+\frac{3\sqrt{3}}{8}r^3\varphi(w)\\
&=\frac{\sqrt{3}}{2}rt_0^2P\Big(w,(\cos\theta\oi+\sin\theta\oj)w\Big)\\
&\quad-\frac{\sqrt{3}}{2}r^2t_0\left(P_w\Big((\cos\theta\oi+\sin\theta\oj)w, e^{-2\theta\ok}w\Big)+\frac{\sqrt{3}}{2}Q\Big(w,(\cos\theta\oi+\sin\theta\oj)w\Big)\right)\\
&\quad+\frac{\sqrt{3}}{8}r^3\left(P\Big(w,e^{-2\theta\ok}w\Big)+\sqrt{3}Q\Big(w,e^{-2\theta\ok}w\Big)+3\varphi(w)\right)\\
&\equiv0,
\end{aligned}
\end{equation}
for any $0<r<1$ with $r^2+t_0^2=1$, $\theta\in\mathbb{R}$ and for any $w\in\mathbb{H}$. By comparing the degree of $r$ or using a coordinate translation $(r=\cos\phi, t_0=\sin\phi)$, one can easily deduce from the preceding identity that
\begin{eqnarray}
&P\Big(w,(\cos\theta\oi+\sin\theta\oj)w\Big)\equiv0, \label{t2}\\
&P_w\Big((\cos\theta\oi+\sin\theta\oj)w, e^{-2\theta\ok}w\Big)+\frac{\sqrt{3}}{2}Q\Big(w,(\cos\theta\oi+\sin\theta\oj)w\Big)\equiv0,\label{t1}\\
&P\Big(w,e^{-2\theta\ok}w\Big)+\sqrt{3}Q\Big(w,e^{-2\theta\ok}w\Big)+3\varphi(w)\equiv0, \quad \textit{for any }\theta\in\mathbb{R}, w\in\mathbb{H}.\label{t0}
\end{eqnarray}

The identity (\ref{t2}) will lead to
$$\cos^2\theta~ P(w,\oi w)+\sin^2\theta~ P(w,\oj w)+\sin2\theta~ P_w(\oi w, \oj w)\equiv0, \quad \textit{for any }\theta\in\mathbb{R}, w\in\mathbb{H}.$$
This implies
\begin{equation}\label{eqt2}
P(w,\oi w)=P(w,\oj w)=P_w(\oi w, \oj w)\equiv0, \quad\quad \textit{for any }  w\in\mathbb{H}.
\end{equation}

Computing the identity (\ref{t1}), we obtain
$$\begin{aligned}
&\cos\theta\cos2\theta ~P_w(\oi w, w)-\sin\theta\sin2\theta~ P_w(\oj w, \ok w)+\frac{\sqrt{3}}{2}\cos\theta~ Q(w,\oi w)\\
&+\sin\theta\cos2\theta~ P_w(\oj w, w)-\cos\theta\sin2\theta~ P_w(\oi w, \ok w) +\frac{\sqrt{3}}{2}\sin\theta ~Q(w,\oj w)\equiv0,
\end{aligned}$$
for any $\theta\in\mathbb{R}$ and $w\in\mathbb{H}$. This implies
$$\begin{aligned}
&\cos\theta\cos2\theta ~P_w(\oi w, w)-\sin\theta\sin2\theta~ P_w(\oj w, \ok w)+\frac{\sqrt{3}}{2}\cos\theta~ Q(w,\oi w)\equiv0,\\
&\sin\theta\cos2\theta~ P_w(\oj w, w)-\cos\theta\sin2\theta~ P_w(\oi w, \ok w) +\frac{\sqrt{3}}{2}\sin\theta ~Q(w,\oj w)\equiv0,
\end{aligned}$$
and thus
\begin{equation}\label{eqt1}
\begin{aligned}
&-P_w(\oi w, w)=P_w(\oj w, \ok w)=\frac{\sqrt{3}}{2}Q(w,\oi w)=:\beta(w)=:\beta,\\
&P_w(\oj w, w)=P_w(\oi w, \ok w) =\frac{\sqrt{3}}{2}Q(w,\oj w)=:\gamma(w)=:\gamma, \quad \textit{for any }  w\in\mathbb{H},
\end{aligned}
\end{equation}
where $\beta,\gamma$ are denoted to be the corresponding cubic forms.

Computing the identity (\ref{t0}), we deduce
$$\begin{aligned}
&\cos^22\theta~P(w,w)+\sin^22\theta~P(w,\ok w)+3\varphi(w)\\
&-\sin4\theta~P_w(w,\ok w)+\sqrt{3}\cos2\theta~Q(w,w)-\sqrt{3}\sin2\theta~Q(w,\ok w)\equiv0,
\end{aligned}$$
for any $\theta\in\mathbb{R}$ and $w\in\mathbb{H}$. This implies
\begin{equation}\label{eqt0}
\begin{aligned}
&P(w,w)=P(w,\ok w)=-3\varphi(w)=:\alpha(w)=:\alpha,\\
&P_w(w,\ok w)=Q(w,w)=Q(w,\ok w)\equiv0,\quad \textit{for any }  w\in\mathbb{H}£¬
\end{aligned}
\end{equation}
where $\alpha$ is denoted to be the corresponding cubic form.

As $\{w, \oi w, \oj w, \ok w\}$ form an orthonormal basis of $\mathbb{H}$ for any $w\in\mathbb{S}^3$, under this basis we deduce the matrix $P_w$ from (\ref{eqt2}, \ref{eqt1}, \ref{eqt0}) as
$$|w|^2P_w (w, \oi w, \oj w, \ok w)= (w, \oi w, \oj w, \ok w)
\begin{pmatrix}
\alpha & -\beta&\gamma&0\\
-\beta&0&0&\gamma\\
\gamma&0&0&\beta\\
0&\gamma&\beta&\alpha
\end{pmatrix}.$$
Or alternatively,
\begin{equation}\label{Pweq}
|w|^4P_w=(w, \oi w, \oj w, \ok w)
\begin{pmatrix}
\alpha & -\beta&\gamma&0\\
-\beta&0&0&\gamma\\
\gamma&0&0&\beta\\
0&\gamma&\beta&\alpha
\end{pmatrix}(w, \oi w, \oj w, \ok w)^t.
\end{equation}
Let $w=\mathbf{1},~\oi,~\oj,~\ok$ respectively, and let $\alpha_i,\beta_i,\gamma_i, P_i$ ($i=1,2,3,4$) denote the corresponding values of $\alpha(w),\beta(w),\gamma(w)$ and $P_w$.
In this way,
\begin{equation}\label{Pi}
\begin{aligned}
&P_1=\begin{pmatrix}
\alpha_1 & -\beta_1&\gamma_1&0\\
-\beta_1&0&0&\gamma_1\\
\gamma_1&0&0&\beta_1\\
0&\gamma_1&\beta_1&\alpha_1
\end{pmatrix},\quad P_2=\begin{pmatrix}
0 & \beta_2&-\gamma_2&0\\
\beta_2&\alpha_2&0&-\gamma_2\\
-\gamma_2&0&\alpha_2&-\beta_2\\
0&-\gamma_2&-\beta_2&0
\end{pmatrix},\\
&P_3=\begin{pmatrix}
0 & \beta_3&-\gamma_3&0\\
\beta_3&\alpha_3&0&-\gamma_3\\
-\gamma_3&0&\alpha_3&-\beta_3\\
0&-\gamma_3&-\beta_3&0
\end{pmatrix},\quad  P_4=\begin{pmatrix}
\alpha_4 & -\beta_4&\gamma_4&0\\
-\beta_4&0&0&\gamma_4\\
\gamma_4&0&0&\beta_4\\
0&\gamma_4&\beta_4&\alpha_4
\end{pmatrix}.
\end{aligned}
\end{equation}
Noting that $P_w=\sum_{i=1}^4w_iP_i$ is linear about $w$, we see that the $(1,4)$ entry and $(2,3)$ entry of $P_w$ vanish for any $w\in\mathbb{H}$.
Calculating the $(1,4)$ entry and $(2,3)$ entry of $|w|^4P_w$ from the preceding formula (\ref{Pweq}), we obtain the following two equations:
$$\begin{aligned}
&-2\gamma(w_1w_2+w_3w_4)-2\beta(w_1w_3-w_2w_4)\equiv0,\\
&2\gamma(w_1w_2+w_3w_4)-2\beta(w_1w_3-w_2w_4)\equiv0,
\end{aligned}$$
which implies that $\beta(w)=\gamma(w)\equiv0$. Now since $Q(u,v)$ is linear about $v$, we conclude from (\ref{eqt1}, \ref{eqt0}) that $Q(u,v)\equiv0$.
Moreover, $P_w$ can be given explicitly
$$\begin{aligned}
|w|^4P_w
&=(w, \oi w, \oj w, \ok w)
\begin{pmatrix}
\alpha & 0&0&0\\
0&0&0&0\\
0&0&0&0\\
0&0&0&\alpha
\end{pmatrix}(w, \oi w, \oj w, \ok w)^t\\
&=\alpha\begin{pmatrix}
 w_1^2+w_4^2& w_1w_2+w_3w_4& w_1w_3-w_2w_4&0\\
 w_1w_2+w_3w_4&w_2^2+w_3^2&0&w_2w_4-w_1w_3\\
w_1w_3-w_2w_4&0&w_2^2+w_3^2&w_1w_2+w_3w_4\\
0&w_2w_4-w_1w_3&w_1w_2+w_3w_4&w_1^2+w_4^2
\end{pmatrix}.
\end{aligned}$$

On the other hand, a calculation by using formula (\ref{Pi}) implies
$$P_w=\sum_{i=1}^4w_iP_i=\odiag(\alpha_1w_1+\alpha_4w_4, \alpha_2w_2+\alpha_3w_3, \alpha_2w_2+\alpha_3w_3, \alpha_1w_1+\alpha_4w_4).$$
Combining these two expressions of $P_w$ will lead us to
$$\begin{aligned}
&|w|^4(\alpha_1w_1+\alpha_4w_4)\equiv\alpha( w_1^2+w_4^2), \quad |w|^4(\alpha_2w_2+\alpha_3w_3)=\alpha(w_2^2+w_3^2),\\
&\alpha(w_1w_2+w_3w_4)=\alpha(w_1w_3-w_2w_4)\equiv0, \quad \quad \textit{for any }  w\in\mathbb{H}.
\end{aligned}$$
This implies that $\alpha\equiv0$, $P_w\equiv0$ and thus $P(u,v)=P_u(v)\equiv0$, and $\varphi(u)\equiv0$ by (\ref{eqt0}).
In conclusion, $\Phi(u,v)=P(u,v)+Q(u,v)+\varphi(u)\equiv0$, any cubic form $\Phi$ vanishing on $M_+$ is identically zero as desired.

Now we turn to consider the question on $M_-:=F^{-1}(-1)\cap\mathbb{S}^7$ which is diffeomorphic but not antipodal (nor isometric) to $M_+$. A cubic form vanishing on $M_-$ may not vanish on $M_+$. However, the images of $M_{\pm}$ under the Hopf fiberation $\pi$ $$M_-^C:=\pi(M_-)=F_C^{-1}(-1)\cap \mathbb{S}^4=-F_C^{-1}(1)\cap \mathbb{S}^4=-\pi(M_+)=-M_+^C$$  are antipodal to each other. Observing the identity (\ref{M+cp}), we parameterize points of $M_+$ alternatively by
$$M_+=\Big\{\Big(\frac{\sqrt{3}}{2}\cos\phi~ w, -\sin\phi~(\cos\theta\oi+\sin\theta\oj)w+\frac{1}{2}\cos\phi~ e^{-2\theta\ok}w\Big) \mid \theta,\phi\in\mathbb{R},~w\in\mathbb{S}^3\Big\}.$$
A long but straightforward calculation shows that $M_-$ can also be parameterized as $M_+$ above by
$$M_-=\Big\{\Big(\sin\phi~(\cos\theta\oi+\sin\theta\oj)w+\frac{1}{2}\cos\phi~ e^{2\theta\ok}w, -\frac{\sqrt{3}}{2}\cos\phi~ w\Big) \mid \theta,\phi\in\mathbb{R},~w\in\mathbb{S}^3\Big\}.$$
In fact, let $t_0:=\sin\phi,~ t_1:=\cos\phi\cos\theta,~ t_2:=\cos\phi\sin\theta$ as before, then it can be easily verified from $M_-=\pi^{-1}(-M_+^C)$ by the parametrization of $M_+^C$ in (\ref{M+c}). Then by the same argument as on $M_+$, we can show that any cubic form $\Phi$ vanishing on $M_-$ is identically zero.
\end{proof}

To prove Theorem \ref{g6}, the last case we are left with considering is the isoparametric polynomial with $(g,m)=(6,2)$. Fortunately, isoparametric hypersurfaces in this case have been classified by Miyaoka (\cite{Miy13, Miy16}) to be the unique homogeneous class. Moreover, the explicit formulae of the isoparametric polynomials representing the homogeneous isoparametric hypersurfaces with $g=6$, $m=1,2$, denoted by $F_1(x)$ ($x\in\mathbb{R}^8$) and $F_2(X)$ ($X\in\mathbb{R}^{14}$) respectively, were given by Ozeki and Takeuchi \cite{OT75}, and then were simplified by Peng and Hou \cite{PH89}. The points $X\in\mathbb{R}^{14}$ are written in terms of the skew-Hermitian matrix representation of the exceptional simple Lie algebra $\mathfrak{g}_2$, while the points $x\in\mathbb{R}^8$ are identified with the real symmetric matrices in $\mathfrak{p}$ of the Cartan decomposition $\mathfrak{g}_2=\mathfrak{k}+\sqrt{-1}\mathfrak{p}$ with $\mathfrak{k}$ being the real skew-symmetric part. Explicitly, we can write $X=K+\sqrt{-1}x$ with the real symmetric part $x$ and the real skew-symmetric part $K$ in the following form (\cite{PH89}):
\begin{equation}\label{g6m2}
x=\begin{pmatrix}
0&Y&-Y\\
Y^t&T&S\\
-Y^t&-S&-T
\end{pmatrix}, \quad K=\begin{pmatrix}
0&u&u\\
-u^t&U&V\\
-u^t&V&U
\end{pmatrix},
\end{equation}
where  $$Y=\frac{1}{\sqrt{3}}(y_1,y_2,y_3), \quad  u=\frac{1}{\sqrt{3}}(u_1,u_2,u_3),$$
$$\begin{aligned}
&T=\begin{pmatrix}
t_1&\frac{1}{\sqrt{2}}y_4&\frac{1}{\sqrt{2}}y_5\\
\frac{1}{\sqrt{2}}y_4&t_2&\frac{1}{\sqrt{2}}y_6\\
\frac{1}{\sqrt{2}}y_5&\frac{1}{\sqrt{2}}y_6&t_3
\end{pmatrix}, \quad
S=\frac{1}{\sqrt{6}}\begin{pmatrix}
0&-y_3&y_2\\
y_3&0&-y_1\\
-y_2&y_1&0
\end{pmatrix};\\
&U=\frac{1}{\sqrt{2}}\begin{pmatrix}
0&u_4&u_5\\
-u_4&0&u_6\\
-u_5&-u_6&0
\end{pmatrix}, \quad
V=\frac{1}{\sqrt{6}}\begin{pmatrix}
0&-u_3&u_2\\
u_3&0&-u_1\\
-u_2&u_1&0
\end{pmatrix};
\end{aligned}$$
where $t_1+t_2+t_3=0$, $y_i,u_i$ ($i=1,\cdots,6$) are real numbers. Then the isoparametric polynomials with $g=6$, $m=1,2$, can be given as
\begin{equation}\label{isopg6}
F_1(x):=18 \mathrm{tr}(x^6) -\frac{5}{4}(\mathrm{tr}(x^2))^3,\quad  F_2(X):=18 \mathrm{tr}(X^6) -\frac{5}{4}(\mathrm{tr}(X^2))^3.
\end{equation}
Clearly we have $F_1(x)=-F_2(X)$ for $X=\sqrt{-1}x$ with $K=0$.

Now we are ready to prove Theorem \ref{g6} for the last case $(g,m)=(6,2)$. Assume that $G_{F_2}^{\pm}(X):=|X|^6\pm F_2(X)=\sum_{\alpha}\Phi_{\alpha}(X)^2$ is a sum of squares of some cubic forms $\Phi_{\alpha}(X):=\Phi_{\alpha}(K,x)$ on $X=K+\sqrt{-1}x:=(K,x)\in\mathbb{R}^{14}$.
Then restricting to $\sqrt{-1}\mathfrak{p}$ ($X=\sqrt{-1}x=(0,x)\in\mathbb{R}^8$ with $K=0$), we deduce
$$G_{F_1}^{\mp}(x):=|x|^6\mp F_1(x)=|X|^6\pm F_2(X)=\sum_{\alpha}\Phi_{\alpha}(0,x)^2$$
is a sum of squares of cubic forms. This contradicts Theorem \ref{g6m1}. \qed  

We conclude this section with a remark. It can be conjectured that a statement similar to Theorem \ref{g6m1} holds in the case of $(g,m)=(6,2)$. More precisely,
we conjecture that any cubic form on $\mathbb{R}^{14}$ vanishing on the focal submanifold $M_+$ or $M_-$ of dimension $10$ is identically zero.
Unfortunately, due to extremely complicated computations, we failed to give a proof.

\section{Further remarks and applications}\label{sec-6}
In this section we present further discussions on the \emph{psd} forms $G_F^+$ of (\ref{psdform}) and their zeroes.

\subsection{Relations with Lagrange's identity}
Let us start with the \emph{psd} forms $G_F^+$, denoted by $G_{km}(x):=\frac{1}{8}G_F^+(x)$, for isoparametric polynomials on $\mathbb{R}^{2l}$ of OT-FKM type with $g=4$, $l=k\delta(m)$, $k\geq2$, $m=1,2,3$ and the definite class of $m=4$, respectively.

The nonnegativity of $G_{km}(x)$ shows Cauchy-Schwarz's inequality for real, complex and quaternionic vectors in $\mathbb{F}^k$ with $\mathbb{F}=\mathbb{R}, \mathbb{C}, \mathbb{H}$ for $m=1,2,4$, respectively, namely,
\begin{equation}\label{CS-non-sos1}
G_{km}(X,Y)=|X|^2|Y|^2-|\langle X,Y\rangle_{H}|^2\geq0, \quad \textit{for}~x=(X,Y)\in \mathbb{F}^k\oplus\mathbb{F}^k,~k\geq2,
\end{equation}
where $\langle X,Y\rangle_{H}=\sum_{i=1}^kX_i\overline{Y_i}$ is the Hermitian product. For $m=1,2$, the nonnegativity also follows from the Lagrange identity
$$|X|^2|Y|^2-|\langle X,Y\rangle_{H}|^2=|X\wedge Y|^2, \quad \textit{for}~X,Y\in \mathbb{R}^k~\textit{or}~\mathbb{C}^k,~k\geq2,$$
 which is exactly Solomon's \emph{sos} expression of $G_{km}(x)$ in (\ref{SOS-FKM1}). However for $m=4$, usually one needs another approach to prove the nonnegativity (see \cite{GLZ18} for example), since there no longer holds Lagrange's identity for quaternions; in fact, there do not even exist \emph{sos} expressions for $G_{km}(x)$ (proved in Theorem \ref{defcase}). As for the expression in (\ref{CS-non-sos1}), we recall (\ref{SOS-ind-assum2}) where $G_{km}(x)=\frac{1}{8}G_F^+(x)\in P_{2l,4}\setminus \Sigma_{2l,4}$ can be rewritten as
 \begin{equation}\label{identity-Gqm}
G_{km}(u,v)=|u|^2|v|^2-\langle u,v\rangle^2-\sum_{\alpha=1}^{m-1}\langle E_\alpha u,v\rangle^2, \quad x=(u,v)\in\mathbb{R}^l\oplus\mathbb{R}^l.
\end{equation}
 Now $m=4$, $l=4k$, there is a natural isomorphism $\mathbb{R}^l\cong\mathbb{H}^k$ which identifies $(u,v)=(X,Y)\in\mathbb{H}^k\oplus\mathbb{H}^k$. Because the Clifford system is definite,  the Clifford algebra $\{E_1,E_2,E_3\}$ on $\mathbb{R}^{4k}$ corresponds to the left quaternionic product by $\{\oi,\oj,\ok\}$ on $\mathbb{H}^k$ and thus
$$|\langle X,Y\rangle_{H}|^2=\langle u,v\rangle^2+\langle E_1u,v\rangle^2+\langle E_2u,v\rangle^2+\langle E_3u,v\rangle^2.$$

On the other hand, Proposition \ref{prop-sosrx24} tells us the following \emph{sosr} expression for $G_{k4}(x)$ with a uniform denominator $|x|^2$:
$$4|x|^2(|X|^2|Y|^2-|\langle X,Y\rangle_{H}|^2)=|\nabla G_{k4}(x)|^2,\quad \textit{for}~x=(X,Y)\in \mathbb{H}^k\oplus\mathbb{H}^k,~k\geq2.$$
However, this does not generalize Lagrange's identity for quaternions. Alternatively, we use the  \emph{sos} expression for $H_F(x)$ in Proposition \ref{prop-HF}:
 $$|\nabla F(x)\wedge x|^2=16H_F(x)=16G_F^+(x)G_F^-(x).$$
This indeed generalizes Lagrange's identity for quaternions:
\begin{equation}\label{sosr-Lag}
4\Big(\sum_{\alpha=0}^4\langle P_{\alpha}x,x\rangle^2\Big)\Big(|X|^2|Y|^2-|\langle X,Y\rangle_{H}|^2\Big)=\Big|\sum_{\alpha=0}^4\langle P_{\alpha}x,x\rangle P_\alpha x ~\wedge~x\Big|^2,
\end{equation}
where for $x=(X,Y)$,
\begin{equation}\label{cliffordm4}
\begin{aligned}
&P_0x=(X,-Y), \quad P_1x=(Y,X),\\
&P_{\alpha+1} x=(E_\alpha Y, -E_\alpha X)=(\oi Y, -\oi X),~ (\oj Y, -\oj X),~ (\ok Y, -\ok X),
\end{aligned}
\end{equation}
 for $\alpha=1,2,3$, respectively. Note also that when $X,Y\in\mathbb{R}^k$ or $\mathbb{C}^k$, (\ref{sosr-Lag}) reduces to the classical Lagrange identity.
Summarizing the arguments above, we have shown
\begin{prop}\label{propA}
For $~X,Y\in \mathbb{H}^k,~k\geq 2$, the \emph{psd} polynomial
$|X|^2|Y|^2-|\langle X,Y\rangle_{H}|^2 $ is not \emph{sos.} However, the polynomial
$(|X|^2+|Y|^2)(|X|^2|Y|^2-|\langle X,Y\rangle_{H}|^2 )$
is \emph{ sos} with a concrete representation. Furthermore, a generalized Lagrange identity $(\ref{sosr-Lag})$ holds for the quaternionic case.
\end{prop}

Now we turn to the case of $m=3$. The Clifford system $\{P_0,\cdots,P_3\}$ is just that of $m=4$ by deleting $P_4$ from (\ref{cliffordm4}). In this way, we can rewrite the \emph{psd} form $G_{k3}(x)\in P_{8k,4}$, $k\geq2$, by: for $x=(X,Y)\in \mathbb{H}^k\oplus\mathbb{H}^k$,
\begin{equation}\label{CS-non-sos2}
\begin{aligned}
G_{k3}(X,Y)&=G_{k4}(X,Y)+\Big(\oRe (\ok\langle X,Y\rangle_{H})\Big)^2\\
&=|X|^2|Y|^2-|\langle X,Y\rangle_{H}|^2+\Big(\oRe (\ok\langle X,Y\rangle_{H})\Big)^2.
\end{aligned}
\end{equation}
Similarly, we can rewrite the cases of $m=1,2$ for quaternions by:
\begin{equation}\label{CS-sos3}
\begin{aligned}
G_{2k,2}(X,Y)&=G_{k3}(X,Y)+\Big(\oRe (\oj\langle X,Y\rangle_{H})\Big)^2\\
&=|X|^2|Y|^2-\Big(\oRe (\langle X,Y\rangle_{H})\Big)^2-\Big(\oRe (\oi\langle X,Y\rangle_{H})\Big)^2;
\end{aligned}
\end{equation}
\begin{equation}\label{CS-sos4}
\begin{aligned}
G_{4k,1}(X,Y)&=G_{2k,2}(X,Y)+\Big(\oRe (\oi\langle X,Y\rangle_{H})\Big)^2\\
&=|X|^2|Y|^2-\Big(\oRe (\langle X,Y\rangle_{H})\Big)^2,
\end{aligned}
\end{equation}
for $X,Y\in \mathbb{H}^k\cong\mathbb{C}^{2k}\cong\mathbb{R}^{4k}$, $k\geq2$.

Noting that $G_{k3}$ corresponds to $\frac{1}{8}G_F^+$ for isoparametric polynomials of OT-FKM type with $(m_+,m_-)=(3,4k-4)$, we conclude from Theorem \ref{SOS-FKM} the following
\begin{cor}
 The \emph{psd} form $G_{k3}\in P_{8k,4}$ is \emph{sos} if $k=2$, and \emph{non-sos} if $k\geq3$.
\end{cor}
\begin{rem}
From the corollary above and \emph{(\ref{CS-non-sos2}-\ref{CS-sos4})}, one can see that the \emph{non-sos} \emph{psd} form  $G_{k4}\in P_{8k,4}$ $($resp. $G_{k3}\in P_{8k,4}$$)$ would turn to be \emph{sos} if an additional square of a quadratic form is added in the case of $k=2$ $($resp. of $k\geq3$$)$.
\end{rem}

\subsection{Applications to orthogonal multiplication}

Recall that an orthogonal multiplication of type $[p,q,r]$, $p\leq q$, is a bilinear map
$$T:\mathbb{R}^p\times\mathbb{R}^q\rightarrow\mathbb{R}^r$$
 such that $|T(u,v)|=|u||v|$ for all $u\in \mathbb{R}^q$ and $v\in \mathbb{R}^q$. The existence problem of a given type $[p,q,r]$ has been studied many times but is still open (see \cite{Lam85} and references therein). The case when $p=q$ is of particular interest for its important applications in geometry, e.g., harmonic maps from $\mathbb{S}^{2p-1}$ to $\mathbb{S}^r$ by Hopf construction. For example,
 this will produce the classical harmonic maps (i.e., Hopf fibrations) from $\mathbb{S}^{2m-1}$ to $\mathbb{S}^{m}$ with $m=1, 2, 4, 8$.
 Furthermore, by
 deforming the Hopf construction into a harmonic map, one can establish many harmonic representations
  in the classes of homotopy groups of spheres (see \cite{PT97, PT98}).

Now for the infinitely many classes of isoparametric polynomials $F(x)$ of OT-FKM type, we have shown in (\ref{SOS-ind-assum2}) that
\begin{equation}\label{Rewritten}
\frac{1}{8}G_F^+(x)=|u|^2|v|^2-\langle u,v\rangle^2-\sum_{\alpha=1}^{m-1}\langle E_\alpha u,v\rangle^2\in P_{2l,4}, \quad x=(u,v)\in\mathbb{R}^l\times\mathbb{R}^l.
\end{equation}
This result shows immediately:
\begin{cor}
Orthogonal multiplications of type $[l,l,m+r]$ $(r\geq1)$ with the form
$$T(u,v)=\Big(\langle u,v\rangle, \langle E_1 u,v\rangle, \cdots, \langle E_{m-1} u,v\rangle, T_{1}(u,v),\cdots,T_{r}(u,v) \Big),$$
where $T_i(u,v)$'s are nonzero bilinear functions, exist if and only if the Clifford algebra $\{E_1,\cdots,E_{m-1}\}$ on $\mathbb{R}^{2l}$ occurs in the \emph{sos} cases of Theorem $\ref{SOS-FKM}$. In this case,
$$(m,l)=(1,k)\footnote{Rigorously, the case of $(m,l)=(1,2)$ does not come from isoparametric polynomials with $g=4$ because now $m_-=l-m-1=0$.}, ~(2,2k),~ (3,8), ~(4,8),~ (5,8),~ (6,8),\quad k\geq2.$$
\end{cor}

\subsection{Applications to the \emph{sos} problem on Grassmannian $Gr_2(\mathbb{R}^l)$}
The \emph{non-sos} \emph{psd} forms $G_F^+\in P_{2l,4}$ will provide examples of \emph{psd} quadratic forms that are \emph{non-sos} on the oriented Grassmannian $Gr_2(\mathbb{R}^l)$. For our purpose, we regard $Gr_p(\mathbb{R}^l)$ as a quadratic variety in $\mathbb{S}^{\binom{l}{p}-1}\subset \Lambda^p(\mathbb{R}^l)$ by the Pl\"{u}cker relations, though $Gr_2(\mathbb{R}^l)\cong Q^{l-2}(\mathbb{C})=\{[z]\in\mathbb{CP}^{l-1}\mid z_1^2+\cdots+z_l^2=0\}$ is more understandable, via $x\wedge y\rightarrow x+\oi y$ where
$x, y \in\mathbb{R}^l$ is an orthonormal basis of an oriented plane.

Recall that a real homogeneous polynomial is called nonnegative (\emph{psd}) on a variety $X\subset\mathbb{CP}^n$ if its evaluation at each real point is nonnegative, and is called \emph{sos} on $X$ if it is a polynomial sum of squares modulo the defining polynomials, ideal $I(X)$ consisting of polynomials vanishing on $X$. For example, if $X$ is the Grassmannian, the ideal $I(X)$ is generated by the quadratic forms of the Pl\"{u}cker relations (cf. \cite{Ha95}). In particular, an exterior $2$-form $\omega\in\Lambda^2(\mathbb{F}^l)$ ($\mathbb{F}=\mathbb{R}$ or $\mathbb{C}$) is decomposable (namely, belongs to $Gr_2(\mathbb{F}^l)$) if and only if $\omega\wedge\omega=0$, which represents $\binom{l}{4}$ independent quadratic relations.

A celebrated result of Blekherman-Smith-Velasco \cite{BSV16} identifies all of the real projective varieties $X\subset\mathbb{CP}^n$, on which every nonnegative quadratic function is a polynomial sum of squares modulo the defining ideal of $X$, to be those varieties of minimal degree $\mathrm{deg}(X)=\mathrm{codim}(X)+1$. This generalizes Hilbert's theorem from $X=\mathbb{CP}^1\subset\mathbb{CP}^{\frac{d}{2}}$ by Veronese maps (i.e., $P_{2,d}=\Sigma_{2,d}$), or $X=\{[x^2:xy:xz:y^2:yz:z^2]\in\mathbb{CP}^5\mid [x:y:z]\in \mathbb{CP}^2\}$ (i.e., $P_{3,4}=\Sigma_{3,4}$) to all varieties of minimal degree (which have been classified in algebraic geometry). It is well known that the Grassmannian $Gr_2(\mathbb{C}^l)\subset\mathbb{CP}^{\binom{l}{2}-1}$ ($l>4$) is not of minimal degree. Hence, there exist non-sos nonnegative quadratic forms on $Gr_2(\mathbb{C}^l)$ ($l>4$). Illustrating this result of Blekherman-Smith-Velasco, our examples (Corollary \ref{cor-BSV-ex} below) give explicit examples of such non-sos nonnegative quadratic forms on $Gr_2(\mathbb{C}^l)$. Recently, Bettiol-Kummer-Mendes \cite{BKM21} showed, among several important results, that the closed convex cone $P_{Gr_k(\mathbb{C}^l)}$  of nonnegative quadratic forms on $Gr_k(\mathbb{C}^l)$ ($2\leq k\leq l-2$ and $l\geq5$) is not a spectrahedral shadow. In their proof the existence of a non-sos nonnegative quadratic form on $Gr_k(\mathbb{C}^l)$ plays a key role.

Now let $\{E_1,\cdots,E_{m-1}\}$ be a Clifford algebra on $\mathbb{R}^l$, ($l=k\delta(m)\geq8$, $m\geq3$), whose associated \emph{psd} form $G_F^+\in P_{2l,4}$ is \emph{non-sos} in Theorem \ref{SOS-FKM}. Let $\varphi_{\alpha}\in\Lambda^2(\mathbb{R}^l)$ be the exterior $2$-forms defined by $\varphi_{\alpha}(u,v):=\langle E_\alpha u,v\rangle$, $\alpha=1,\cdots,m-1$. Note that we can also regard $\varphi_{\alpha}$ as linear functions on $\Lambda^2(\mathbb{R}^l)$ by setting $\varphi_{\alpha}(u\wedge v):=\varphi_{\alpha}(u,v)$. Then using Lagrange's identity we can rewrite (\ref{Rewritten}) as
\begin{equation}\label{nonsos-HL}
\frac{1}{8}G_F^+(x)=|u\wedge v|^2-\sum_{\alpha=1}^{m-1}(\varphi_{\alpha}(u\wedge v))^2\in P_{2l,4}\setminus \Sigma_{2l,4}, \quad x=(u,v)\in\mathbb{R}^l\times\mathbb{R}^l.
\end{equation}
It provides the following examples.
\begin{cor}\label{cor-BSV-ex}
Associated to each \emph{non-sos} \emph{psd} form $G_F^+\in P_{2l,4}$ in Theorem $\ref{SOS-FKM}$, $$\Phi_F(\omega):=|\omega|^2-\sum_{\alpha=1}^{m-1}(\varphi_{\alpha}(\omega))^2, \quad \omega\in\Lambda^2(\mathbb{R}^l),$$ is a \emph{non-sos} \emph{psd} quadratic form on $Gr_2(\mathbb{C}^l)\subset\mathbb{CP}^{\binom{l}{2}-1}$.
\end{cor}
\begin{proof}
Otherwise, suppose there were linear functions $\psi_1,\cdots,\psi_r$ (exterior $2$-forms in $\Lambda^2(\mathbb{R}^l)$)  such that
$$\Phi_F(\omega)=\sum_{i=1}^r(\psi_i(\omega))^2+P(\omega),$$
for some quadratic form $P$ in the span of the Pl\"{u}cker relations. Restricting $\Phi$ to $u\wedge v\in Gr_2(\mathbb{R}^l)$, we would get
$$|u\wedge v|^2-\sum_{\alpha=1}^{m-1}(\varphi_{\alpha}(u\wedge v))^2=\sum_{i=1}^r(\psi_i(u\wedge v))^2\in \Sigma_{2l,4}\; ,$$
a contradiction to (\ref{nonsos-HL}).
\end{proof}

Furthermore, these \emph{non-sos} forms also provide counterexamples to a generalized version of the Harvey-Lawson \emph{sos} problem (\cite[Question 6.5]{HL82}).
\begin{prob}[Harvey-Lawson \emph{sos} Problem]
Given a $p$-form $\varphi\in\Lambda^p(\mathbb{R}^l)$ such that
   $$\Phi(u_1,\cdots,u_p):=|u_1\wedge\cdots\wedge u_p|^2-\varphi(u_1,\cdots, u_p)^2, \quad u_1,\cdots,u_p\in\mathbb{R}^l,$$
  is nonnegative with nontrivial zeroes $($i.e., $\varphi$ has comass one$)$,
 are there exterior $p$-forms $\psi_1,\cdots,\psi_r$ such that
  the following \emph{sos} expression holds $$\Phi(u_1,\cdots,u_p)=\sum_{i=1}^r\psi_i(u_1,\cdots,u_p)^2 ~?$$
  \end{prob}
  In other words, it asks whether all the \emph{psd} quadratic forms $\Phi(\omega)=|\omega|^2-\varphi(\omega)^2$ are \emph{sos} on $Gr_p(\mathbb{C}^l)\subset\mathbb{CP}^{\binom{l}{p}-1}$. Thus Corollary \ref{cor-BSV-ex} shows that for $p=2$, this \emph{sos} problem cannot be generalized to ask for a \emph{sos} expression of \emph{psd} quadratic forms with the form $\Phi(\omega)=|\omega|^2-\sum_{\alpha=1}^{m}(\varphi_{\alpha}(\omega))^2$ when $m\geq2$.

\subsection{Zeros of the \emph{non-sos} \emph{psd} forms $G_F^+$}
To conclude this section, we would like to discuss further on the question of Solomon (\cite{So92}) as to whether both focal submanifolds
of an isoparametric hypersurface with $g=4$ in a unit sphere
$$M_{\pm}=(G_F^{\mp})^{-1}(0)\cap\mathbb{S}^{n-1}$$
 are quadratic.

The focal submanifolds $M_+$ are already quadratic (see (\ref{g422}, \ref{g445}, \ref{def-M+})). Still this question is important because of his result: \textit{A quadratic form vanishing on one focal submanifold is an eigenfunction of the minimal isoparametric hypersurface $M$ corresponding to the second known eigenvalue $2\dim M$ of the Laplacian on $M$}. A well known conjecture of Yau asserts that the first eigenvalue is $\dim M$ for a closed embedded minimal hypersurface $M$ in a unit sphere, which was proved in the isoparametric case by Tang and Yan \cite{TY13}. As we have shown in Sections \ref{secg42245} and \ref{secg4OTFKM}, $M_-$ in the exceptional two classes $(2,2)$, $(5,4)$ and the OT-FKM type with $m\equiv0~(mod~4)$ of the definite class are not quadratic, by proving that there exist no nonzero quadratic forms vanishing on them. In other classes $M_-$ may admit non-trivial quadratic forms vanishing on them. For example, for those OT-FKM type whose Clifford system $\{P_0,\cdots,P_m\}$ can be extended to a Clifford system $\{P_0,\cdots,P_m, P_{m+1}\}$ (there are many such classes, e.g., when $m\equiv 3,5,6,7~(mod~8)$ or $m\equiv0~ (mod~4)$ of the indefinite class), it is not difficult to show that the extended quadratic form
 $$P_{m+1}(x):=\langle P_{m+1}x,x\rangle$$
  vanishes on $M_-$. However, it seems probable that for all the classes with \emph{non-sos} $G_F^+$ in Table \ref{table-2}, $M_-$ is not a quadratic variety. If so, it
 would give a complete answer to the question of Solomon.

\begin{ack}
We thank sincerely Q.S.Chi, R.Miyaoka, B.Reznick, B.Solomon and G.Thorbergsson for their interest. We also want to thank anonymous referees for reading the manuscript very carefully and making a number of valuable comments.

\end{ack}

\end{document}